\title{Equitable Partitions into \\
Matchings and Coverings in Mixed Graphs}
\author{
Tam\'as Kir\'aly
\thanks{MTA-ELTE Egerv\'ary Research Group, E\"otv\"os University, Budapest. E-mail: {\tt tkiraly@cs.elte.hu}}
\and
Yu Yokoi
\thanks{National Institute of Informatics, Tokyo, Japan. E-mail: {\tt yokoi@nii.ac.jp}
}}
\date{}
\renewcommand{\st}[1]{}
\newtheorem{theorem}{Theorem}[section]
\newtheorem{lemma}[theorem]{Lemma}
\newtheorem{proposition}[theorem]{Proposition}
\newtheorem{corollary}[theorem]{Corollary}
\newtheorem{claim}[theorem]{Claim}
\newcommand{\Z}{\mathbf{Z}}
\newcommand{\R}{\mathbf{R}}
\newcommand{\cS}{\mathcal{S}}
\newcommand{\mixsize}[1]{|{#1}|_{\rm mix}}
\newcommand{\dist}{\text{\rm dist}_{G}}
\newcommand{\Dist}{D}
\newcommand{\covered}{\partial}
\begin{document}

\maketitle

\begin{abstract}
Matchings and coverings are central topics in graph theory.
The close relationship between these two has been key to
many fundamental algorithmic and polyhedral results.
For mixed graphs, the notion of matching forest was proposed as a common generalization of matchings and branchings.

In this paper, we propose the notion of mixed edge cover as a covering counterpart of matching forest,
and extend the matching--covering framework to mixed graphs.
While algorithmic and polyhedral results extend fairly easily,
partition problems are considerably more difficult in the mixed case.
We address the problems of partitioning a mixed graph into matching forests or mixed edge covers,
so that all parts are equal with respect to some criterion, such as edge/arc numbers or total sizes.
Moreover, we provide the best possible multicriteria equalization.
\end{abstract}
\section{Introduction}
Let $G=(V, E\cup A)$ be a mixed graph with undirected edges $E$ and directed arcs $A$.
In this paper, we use the term `edge' only for undirected edges.
Graphs have no loops (edge/arc), but may have parallel edges or arcs.
Each arc has one head and we regard both endpoints of an edge as heads.
We say that $v\in V$ is {\bf covered by} an edge/arc $e\in E\cup A$ if $v$ is a head of $e$.
A {\bf matching forest}, introduced by Giles \cite{G82a,G82b,G82c}, is a subset $F\subseteq E\cup A$ such that
(i) the underlying undirected graph has no cycle and (ii) every vertex $v\in V$ is covered at most once in $F$.
This is a common generalization of the notion of matching in undirected graphs and the notion of branching in directed graphs.
A matching forest is {\bf perfect} if it covers every vertex exactly once, i.e., every $v\in V$ is the head of exactly one edge or arc.
Matching forests have been studied in order to unify fundamental theorems about matchings and branchings. In particular, unifying results were given on total dual integrality by  Schrijver \cite{Schr00}, on Vizing-type theorems by Keijsper \cite{K03}, and on the delta-matroid property of degree-sequences by Takazawa \cite{T14}. \textcolor{black}{One of the main contributions of the present paper is a new unifying result concerning equitable partitions. We consider the problem of partitioning into matching forests of almost equal edge-size and arc-size, which generalizes the well-known equitable partition properties of matchings and branchings.}

\subsection*{Mixed Edge Covers}

In undirected graphs, as shown by Gallai's theorem \cite{Gallai} and other results,
matching is closely related to edge cover, a set of edges covering all vertices.
Another contribution of our paper is to introduce and analyze a covering counterpart of the notion of matching forest, that can be regarded as a common generalization of edge covers and bibranchings.
We present two natural ways to define covering structures in mixed graphs; later we will show that these two are in some sense equivalent.
First, we may relax the requirements in undirected edge cover: instead of requiring each vertex to be covered by an edge, we only require each vertex to be reachable from an edge.
This results in the following version of edge cover for mixed graphs.
\begin{itemize}
\setlength{\leftskip}{-2mm}
\item A {\bf mixed edge cover} in a mixed graph $G=(V, E\cup A)$ is a subset $F\subseteq E\cup A$ such that
for any $v\in V$, there is a directed path (which can be of length $0$)
in $F\cap A$ from an endpoint of some $e\in F\cap E$ to $v$.
\end{itemize}
If the graph is undirected, then this notion coincides with edge cover.
Also, bibranchings in a partitionable directed graph
can be represented as mixed edge covers in an associated mixed graph (see Section \ref{sec:mcf}).
Thus, mixed edge cover generalizes both edge cover and bibranching.
Alternatively, the following notion may also be considered as a covering counterpart of matching forest.
\begin{itemize}
\setlength{\leftskip}{-2mm}
\item A {\bf mixed covering forest} in a mixed graph $G=(V, E\cup A)$ is a subset $F\subseteq E\cup A$
such that (i) the underlying undirected graph has no cycle and (ii) every vertex $v\in V$ is covered at least once in $F$.
\end{itemize}
These two notions coincide if (inclusionwise) minimality is assumed.
That is, a  minimal mixed edge cover is also a minimal mixed covering forest and vice versa (see Proposition~\ref{prop:equivalence}).
In case of nonnegative weight minimization or packing problems,
where the optimal solutions can be assumed to be minimal, the terms are interchangeable. This is however not true for partitioning problems.
In this paper we mainly work with mixed edge covers, and obtain results on mixed covering forests as consequences.

\st{Our results can be divided into the following two parts.
In the first part we show that results on matching and edge-cover naturally extend to mixed graphs,
while the second part deals with new problems which arise from the heterogeneous nature of mixed graphs.}

\textcolor{black}{Before proving our main results on equitable partitioning, we first show some
connections between matching forest and mixed edge cover in Sections~\ref{sec:preliminary} and \ref{sec:algpol}.}
In undirected graphs, matching and edge cover are closely related,
and for both of them, polyhedral and algorithmic results are known.
For mixed graphs, however, only matching forests have been investigated.
\textcolor{black}{We provide several results which show that mixed edge covers exhibit similar properties in mixed graphs as edge covers do in undirected graphs.}
\st{connections between matching forest and mixed edge cover, and use these connections to derive polyhedral and algorithmic results on mixed edge covers.}
\st{We first generalize Gallai's theorem, whose original statement is as follows:...}
\st{For a mixed graph, define the mix-size...}
\st{With this mix-size, the statement of Gallai's theorem holds for matching forests and mixed edge covers.}

\textcolor{black}{We first show that Gallai's theorem \cite{Gallai} on the sizes of maximum matching and minimum edge cover naturally extends to
matching forests and mixed edge covers (Theorem \ref{thm:Gallai}).}
We then reduce the optimization problem on mixed edge covers to optimization
on perfect matching forests in an auxiliary graph.
This fact immediately implies a polynomial time algorithm to find a minimum weight mixed edge cover.
Furthermore, using this relation we can provide a polyhedral description of the mixed edge cover polytope and show its total dual integrality, obtaining a covering counterpart of the result of Schrijver \cite{Schr00}.
\st{These algorithmic and structural results show that mixed edge covers exhibit similar properties in mixed graphs as edge covers do in undirected graphs.}
\textcolor{black}{In this way, algorithmic and polyhedral aspects of the matching--covering framework of undirected graphs naturally extends to mixed graphs.}

\subsection*{Equitable Partitions in Mixed Graphs}
\textcolor{black}{In contrast to the above results, equitable partition problems in mixed graphs have an additional difficulty stemming from their mixed structure.}
Recall that the notion of matching forest is a common generalization of matchings in undirected graphs and
branchings in directed graphs. These structures are known to have the following \emph{equitable partition property} \cite{SchrBook}:
if the edge set $E$ of an undirected graph (resp., the arc set $A$ of a directed graph) can be partitioned into $k$ matchings (resp., branchings) $F_1, F_2,\dots,F_k$,
then we can re-partition $E$ (resp., $A$) into $k$ matchings (resp., branchings) $F'_1, F'_2,\dots, F'_k$ such that $|F'_i|-|F'_j|\leq 1$ for any $i,j\in [k]$ (where $[k]=\{1,2,\dots,k\}$).
Note that bounding the difference of cardinality by $1$  is the best possible equalization.

In this paper, we consider equitable partitioning into matching forests and into mixed edge covers.
\textcolor{black}{Since the definitions of matching forest and mixed edge cover include conditions that depend on both
the edge part and the arc part, the problem cannot be simply decomposed into two separate problems on edges and arcs. Indeed, achieving a difference bounded by 1 in the arc
part is impossible in general, so a more refined approach is needed.}

Equitable partition problems have been studied extensively for various combinatorial structures, the most famous being the equitable coloring theorem
of Hajnal and Szemer\'edi \cite{HS70} and the stronger conjecture of Meyer \cite{Meyer73}, which is still open.
The equitable partition property of matchings implies that the equitable chromatic number of any line graph equals its chromatic number.
Edge/arc partitioning problems with equality or other cardinality constraints have also been studied for other graph structures \cite{CHR91,FSz11,Werra03,Werra05}.

\paragraph{Partitioning into Matching Forests. }
Since mixed graphs have two different types of edges, there are several possible criteria for equalization:
the number of edges, the number of arcs, and the total cardinality.
(We call these edge-size, arc-size, and total size, respectively.)
We study equalization with respect to each of these criteria,
as well as the possibility of ``multicriteria equalization.''

It turns out that the coexistence of edges and arcs makes equalization more difficult.
See the graph in Fig.~\ref{fig1}, which consists of two edges and two arcs.
In order to equalize with respect to all of the above criteria, we would need a partition into a pair of matching forests with one edge and one arc in each, but no such partition exists.

\begin{figure}[htb]
\begin{center}
   \includegraphics[width=65mm]{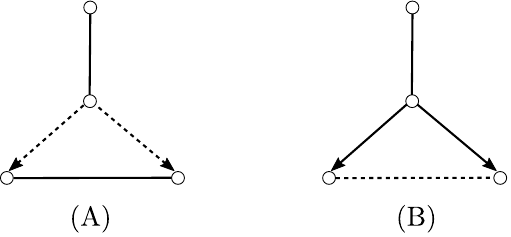}
\caption{There are two possible partitions into two matching forests.
In (A), total size is equalized while in (B) edge-size is equalized.}
\label{fig1}
\end{center}
\end{figure}

In this example, the two arcs are in the same part in any partition into two matching forests.
Thus, unlike in the case of branchings, the difference of $2$ in arc-size is unavoidable in some instances.
The example also shows the impossibility of equalizing edge-size and total size simultaneously.

We show that equalization is possible separately for edge-size and total size.
Also, simultaneous equalization is possible by relaxing one criterion just by $1$.
These results are summarized in the following two theorems.
We sometimes identify a mixed graph $G=(V, E \cup A)$ with $E\cup A$ (e.g., we say ``$G$ is partitionable'' to mean ``$E\cup A$ is partitionable.'')
For a set of matching forests $F_1,\dots,F_k$, we write $M_i:= F_i\cap E$ for their edge parts
and $B_i:= F_i\cap A$ for their arc parts.
\begin{theorem}\label{thm:mf1}
  Let $G=(V, E \cup A)$ be a mixed graph that can be partitioned into $k$ matching forests. Then $G$ can be partitioned into $k$ matching forests $F_1,\dots,F_k$ in such a way that, for every $i,j \in [k]$, we have $||F_i|-|F_j|| \leq 1$,  $||M_i|-|M_j|| \leq 2$, and $||B_i|-|B_j|| \leq 2$.
\end{theorem}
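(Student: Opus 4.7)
The plan is a potential-minimization argument. Among all partitions of $G$ into $k$ matching forests, choose one minimizing the lexicographic pair $\bigl(\sum_i |F_i|^2,\ \sum_i |M_i|^2 + \sum_i |B_i|^2\bigr)$. The first coordinate is responsible for the total-size bound $||F_i|-|F_j|| \leq 1$; the second, minimized subject to the first, is responsible for the edge- and arc-size bounds. Indeed, if $|M_i|-|M_j|\geq 3$, one checks directly that exchanging an $F_i$-edge for an $F_j$-arc strictly decreases the second coordinate while preserving the first, and symmetrically for arcs. So it suffices to produce, for each violated bound, some exchange between two parts that keeps both as matching forests.

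The core technical step is constructing such exchanges. I would analyze a pair $(F_i,F_j)$ via the head-incidence bipartite graph $H$ with node set $V\cup(F_i\cup F_j)$, in which each element is linked to its head(s). Edges have two heads, arcs one, and each vertex has at most one head-covering per part, so every node of $H$ has degree at most $2$, and $H$ decomposes into disjoint paths and cycles. Swapping the $F_i/F_j$ labels along a component $C$ changes the sizes of $F_i$ and $F_j$ in a controlled way, and a pigeonhole argument yields a component realizing the desired improvement: one with $|C\cap F_i|>|C\cap F_j|$ to reduce the total-size discrepancy, or one containing both an $F_i$-edge and an $F_j$-arc to swap types while leaving total sizes fixed.

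The main obstacle, and the bulk of the technical work, is that such a swap can fail to preserve the acyclicity of the underlying undirected graph. The head-incidence decomposition ignores tail-incidences of arcs, so moving an arc between parts may close a directed cycle in the recipient part. For example, if $F_i$ contains $a\to b$ and $b\to c$ while $F_j$ contains $c\to a$, the natural swap sending $c\to a$ into $F_i$ creates a directed cycle in the new $F_i$; yet all three arcs lie in different components of $H$, so the head-incidence structure alone does not detect the obstruction. I expect to resolve this by a refined exchange: whenever a naive swap closes a cycle, locate a shorter sub-structure (tracking the relevant tail-incidences, perhaps via a spanning tree of the underlying sub-graph) on which the swap can be carried out safely, and iterate. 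It is likely this refinement that forces the $\leq 2$ slack in the edge- and arc-size bounds, as opposed to the $\leq 1$ bound for total size, and matches the tightness exhibited by Fig.~\ref{fig1}.
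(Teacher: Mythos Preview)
Your high-level framework---pairwise exchange guided by a potential---is sound and matches the paper's strategy. But the proposal has a genuine gap at exactly the point you flag as ``the bulk of the technical work'': the acyclicity obstruction. Your proposed fix, to ``locate a shorter sub-structure\dots on which the swap can be carried out safely, and iterate,'' is not a proof sketch; it is a hope. In your head-incidence graph $H$, arcs appear as pendants (degree~1 on the element side), so a component swap can move an arbitrary subset of arcs between the two parts, and there is no local structure in $H$ that witnesses or repairs the directed cycles this may create. The example you give already shows that the obstruction is non-local relative to $H$, and nothing in the proposal explains why a terminating refinement exists.

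The paper resolves this by \emph{decoupling} the edge and arc parts rather than trying to swap arcs along alternating components. The key external ingredient is Schrijver's root-exchange lemma for branchings (Lemma~\ref{lem:Schrijver}): given two branchings $B_1,B_2$ and any target root sets $R_1',R_2'$ with $R_1'\cup R_2'=R(B_1)\cup R(B_2)$, $R_1'\cap R_2'=R(B_1)\cap R(B_2)$, and each source component of $B_1\cup B_2$ meeting both $R_1'$ and $R_2'$, one can repartition $B_1\cup B_2$ into branchings with exactly those root sets. So the paper's pairwise operation works as follows: build an auxiliary graph only on the \emph{edge} parts (augmented by dummy edges $v^\bullet v$ for roots not covered by an edge, and with one contraction per source component of $B_1\cup B_2$ to guarantee Schrijver's hypothesis), perform alternating-path exchanges there to obtain new matchings $M_1',M_2'$ and new root sets $R_1',R_2'$, and then invoke Schrijver's lemma to repartition \emph{all} the arcs at once into branchings with those roots. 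Acyclicity of the new matching forests comes for free from the branching structure, not from any local repair. This is the idea your outline is missing; without it (or a substitute of comparable strength), the argument does not close.
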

\begin{theorem}\label{thm:mf2}
  Let $G=(V, E \cup A)$ be a mixed graph that can be partitioned into $k$ matching forests. Then $G$ can be partitioned into $k$ matching forests $F_1,\dots,F_k$ in such a way that,
for every $i,j \in [k]$, we have $||F_i|-|F_j|| \leq 2$, $||M_i|-|M_j|| \leq 1$, and $||B_i|-|B_j|| \leq 2$.
\end{theorem}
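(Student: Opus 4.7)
}

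The plan is to mirror the exchange argument used for Theorem~\ref{thm:mf1}, but with the priorities of the objectives reversed: instead of first driving the total sizes $|F_i|$ together, I first drive the edge-sizes $|M_i|$ together. Among all partitions of $G$ into $k$ matching forests, I would choose one that lexicographically minimizes
\[
\Bigl(\,\sum_{i=1}^k|M_i|^2,\ \sum_{i=1}^k|F_i|^2,\ \sum_{i=1}^k|B_i|^2\,\Bigr),
\]
and then show that each minimization level forces the corresponding bound of the theorem.

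The crucial level is the first one, which should yield $||M_i|-|M_j||\le 1$ for all $i,j$. Suppose otherwise, say $|M_i|\ge|M_j|+2$. Since both $M_i$ and $M_j$ are matchings, the components of $M_i\triangle M_j$ are alternating paths and even cycles, and by a simple counting argument at least one component $P$ is a path with one more $M_i$-edge than $M_j$-edge. A naive swap along $P$ would decrease $|M_i|$ by $1$ and increase $|M_j|$ by $1$, strictly decreasing the primary objective, and the interior of $P$ causes no matching-forest issue because every interior vertex already has exactly one $M_i$-edge and one $M_j$-edge incident to it. The only possible obstruction is at the endpoints of $P$: an endpoint $v$ incident to $M_i$ but not to $M_j$ in $P$ may already be the head of some arc $a\in B_j$, in which case adding the swapped edge to $M_j$ would cover $v$ twice in the new $F_j$.

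The main obstacle will be resolving these endpoint conflicts without destroying the effect of the swap on the primary objective. The plan is to propagate the conflict through the arc parts by moving the offending arc from $B_j$ to $B_i$ (when this is permitted by the matching-forest condition), possibly triggering a cascade in $B_i\cup B_j$ that follows a well-defined alternating chain since each $B_\ell$ is a branching (every vertex is the head of at most one of its arcs). A case analysis on the status of each endpoint of $P$ in the opposite part (uncovered, arc-covered, or edge-covered) should show that either a valid swap-plus-push strictly decreasing $\sum_i|M_i|^2$ exists, or the obstructions at both endpoints combine into an alternating structure in $F_i\cup F_j$ whose existence already contradicts the minimality of the partition. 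The relaxation of the arc and total bounds from $1$ to $2$ in this theorem is precisely the reflection of the fact that each such exchange may also shift one arc across the partition; this matches the unavoidable gap of $2$ in arc-sizes witnessed by Figure~\ref{fig1}.

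Once $||M_i|-|M_j||\le 1$ is established, the secondary and tertiary minimizations can be treated by exchanges acting on $B_i\cup B_j$ with the edge parts fixed. The arc-exchange subproblem is essentially an equitable partition of the branchings $B_1,\ldots,B_k$ subject to head-conflict constraints inherited from the $M_i$, and I expect it to be routine compared to the first step, yielding the final bounds $||F_i|-|F_j||\le 2$ and $||B_i|-|B_j||\le 2$ from the two remaining lexicographic levels.
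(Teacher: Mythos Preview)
Your two-phase outline---equalize edge-sizes first, then total sizes---matches the paper's strategy for Theorem~\ref{thm:mf2}. But both phases, as you describe them, have genuine gaps.

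For the first phase, the ``cascade'' you propose for endpoint conflicts is exactly where the difficulty lies, and it is not clear it terminates or preserves the partition: moving the offending arc $a$ from $B_j$ to $B_i$ can create a directed cycle in $B_i$ (the tail of $a$ may be reachable from its head in $B_i$); breaking that cycle removes an arc that must then be pushed back to $B_j$, and so on. The paper does not attempt to control such a cascade directly. Instead it invokes Schrijver's root-exchange lemma for branchings (Lemma~\ref{lem:Schrijver}): one specifies the desired new root sets $R'_1,R'_2$ up front, and the lemma delivers the repartition of $B_1\cup B_2$ in one shot. To make the lemma applicable, the paper encodes the whole situation in an auxiliary graph where uncovered roots become dummy matching edges and the source-component constraint is enforced by one contraction per component; the alternating paths in that graph then simultaneously prescribe the edge swap and the new root sets.

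For the second phase, your assertion that it ``can be treated by exchanges acting on $B_i\cup B_j$ with the edge parts fixed'' and is ``routine'' is the more serious gap, and it is \emph{not} what the paper does. The pairwise tool the paper uses (Lemma~\ref{lem:mf2}) provides exchanges that may move edges as well as arcs: Operations~3 and~4, which decrease $|F_1|-|F_2|$ by~$2$, are allowed to change $|M_1|-|M_2|$ by~$\pm 2$. What is preserved in the second phase is only the \emph{multiset} of edge-sizes $\{|M_i|\}$ (so $\sum_i|M_i|^2$ stays minimal), not the edge parts $M_i$ themselves. If you freeze the $M_i$ literally, then the feasible root sets for $B_i$ are pinned down by $R(B_i)\supseteq\partial(M_i)$, and together with the source-component constraint in Schrijver's lemma this can block the arc move you need. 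You would have to prove that arc-only exchanges never get stuck at a gap $|F_i|-|F_j|\geq 3$; there is no reason to expect this to be easier than the first phase, and the paper's ten-type path classification in the auxiliary graph is precisely what supplies the missing exchange in every case.
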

We remark again that, even if we consider a single criterion,
the minimum differences in $|F_i|$, $|M_i|$, $|B_i|$ can be $1,1,2$ respectively.
These theorems say that relaxing one criterion just by $1$ is sufficient for simultaneous equalization.

\paragraph{Partitioning into Mixed Edge Covers.}
Next, we consider equitable partitioning into mixed edge covers.
In contrast to the first part, where polyhedral and algorithmic results on mixed edge covers are obtained via reduction to matching forests,
there seems to be no easy way to adapt these reductions to equalization problems.
The reason is that the correspondence between matching forest and mixed edge cover presumes
maximality/minimality, but these cannot be assumed in equitable partitioning problems.

That said, equalization faces similar difficulties as in the case of matching forests.
See the graph in Fig.~\ref{fig2}, which has two components.
Each component has a unique partition into two mixed edge covers,
so the whole graph has only two possible partitions
(one is shown in Fig.~\ref{fig2}, while the other is obtained by flipping the colors in one component.)

\begin{figure}[htb]
\begin{center}
   \includegraphics[width=69mm]{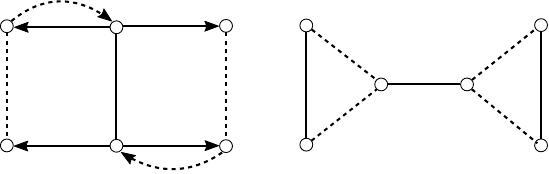}
\caption{A graph that consists of two components. For each component, the partition is unique,
and hence there are two possible partitions for the whole graph.}
\label{fig2}
\end{center}
\end{figure}

This example shows that the difference of $2$ in arc-size is unavoidable, and
simultaneous equalization of edge-size and total size is impossible.
Fortunately, this is the worst case.
Similarly to matching forests, we can obtain the following theorems for mixed edge covers.
For mixed edge covers $F_1,\dots,F_k$, we use the notation $N_i:= F_i\cap E$ for their edge parts
and $B_i:= F_i\cap A$ for their arc parts (the reason for using $N_i$ instead of $M_i$ is to emphasize that $F_i\cap E$ is not necessarily a matching.)

\begin{theorem}\label{thm:mec1}
Let $G=(V, E \cup A)$ be a mixed graph that can be partitioned into $k$ mixed edge covers. Then $G$ can be partitioned into $k$ mixed edge covers $F_1,\dots,F_k$
in such a way that, for every $i,j \in [k]$, we have $||F_i|-|F_j|| \leq 1$, $||N_i|-|N_j|| \leq 2$, and  $||B_i|-|B_j|| \leq 2$.
\end{theorem}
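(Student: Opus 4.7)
}

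The plan is to follow an exchange-argument template reminiscent of the classical equitable partition theorems for matchings and edge covers, adapted to the mixed setting. As the paper already notes, the Section~\ref{sec:algpol} reduction from mixed edge covers to perfect matching forests presumes minimality and therefore does not preserve partition structure, so the bounds must be obtained by a direct argument. Starting from an arbitrary partition $F_1,\dots,F_k$ of $E\cup A$ into mixed edge covers, I would reduce a lexicographic potential, for example $\bigl(\sum_i |F_i|^2,\ \sum_i |N_i|^2+\sum_i|B_i|^2\bigr)$, using two kinds of local modifications.

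\textbf{Stage 1 (equalize total sizes).} As long as $|F_i|\geq|F_j|+2$ for some pair, I would seek a local modification inside $F_i\cup F_j$ that strictly decreases $\sum_i |F_i|^2$ while keeping both parts mixed edge covers. For ordinary edge covers the analogous move is an alternating-path flip in $F_i\triangle F_j$ starting at a vertex whose $F_i$-degree exceeds its $F_j$-degree; I would look for the mixed-graph counterpart by analyzing the connected components of the symmetric difference as a mixed subgraph. Concretely, the excess $|F_i|-|F_j|\geq 2$ should force the existence of some element of $F_i$ whose role (covering certain vertices directly, or carrying reach-paths to them) is already performed by a combination of elements of $F_i$ and $F_j$; following the $F_j\cap A$-reach-paths backward to edges of $F_j\cap E$ and comparing with the corresponding $F_i$-structure should yield the exchange.

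\textbf{Stage 2 (balance edges against arcs).} Once $||F_i|-|F_j||\leq 1$ for all pairs, suppose $|N_i|-|N_j|\geq 3$ for some $i,j$; since $||F_i|-|F_j||\leq 1$ this forces $|B_j|-|B_i|\geq 2$. I would look for a \emph{pair swap} that moves an edge $e\in N_i$ into $F_j$ and an arc $a\in B_j$ into $F_i$, such that both resulting sets remain mixed edge covers. This swap preserves $|F_i|$ and $|F_j|$, so it keeps the Stage~1 bounds intact while decreasing the second coordinate of the potential by shifting one unit of ``edge-count'' from $i$ to $j$ and one unit of ``arc-count'' the other way. To find such $(e,a)$ I would again decompose $F_i\cup F_j$ into pieces and use the reach-path structure of $F_j$ to locate an arc $a$ whose removal from $F_j$ is compensated, after inserting $e$, by a path in $F_j\cap A$; symmetrically, $e\in N_i$ is chosen so that once $a$ is added to $F_i$ the two endpoints of $e$ remain reachable from the other edges of $F_i\cap E$. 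A symmetric argument treats $|B_i|-|B_j|\geq 3$, and iterating drives both differences to at most $2$.

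\textbf{Main obstacle.} The principal difficulty, absent in the pure undirected and pure directed cases, is that the mixed edge cover condition is non-local: a single arc can provide reachability for many vertices simultaneously, so removing or relocating it may cascade through a long directed path. The classical alternating-path flip on $F_i\triangle F_j$ therefore does not transport directly, and the central technical work is to show that the required exchange structure always exists in full generality. I expect this to require a careful invariant that tracks, for each vertex $v$ and each part $F_\ell$, the directed $F_\ell\cap A$-path currently delivering reachability to $v$, and to exploit the fact that $F_i$ and $F_j$ each contain a complete system of such reach-paths so that one can be grafted onto the other after an exchange. The example in Fig.~\ref{fig2} shows that the bounds $2$ on $||N_i|-|N_j||$ and on $||B_i|-|B_j||$ are tight, so the argument must extract every bit of slack available in the symmetric difference.
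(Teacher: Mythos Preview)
Your two-phase outline (first equalize total sizes, then balance edge-counts) matches the paper's high-level structure, but the proposal is missing the key technical ingredient and Stage~2 as written will not work.

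The paper does \emph{not} exchange individual elements of $F_i\triangle F_j$. Instead it first passes to \emph{minimal} mixed edge covers, so that each $B_\ell=F_\ell\cap A$ is a branching with root set $R_\ell=\partial(N_\ell)$, and each $N_\ell$ is a disjoint union of stars. It then builds an auxiliary \emph{undirected} graph $G^*$ in which the star-centers are split so that $N_1,N_2$ become matchings $N_1^*,N_2^*$; alternating paths in $N_1^*\cup N_2^*$ are classified into ten types according to their end-edges, each type having a fixed effect on $|N_1|-|N_2|$ and on $|F_1|-|F_2|$. After flipping along such a path one obtains new edge sets $N_1',N_2'$ and new desired root sets $R_1',R_2'$, but the arc parts must then be \emph{globally repartitioned} into branchings with exactly those root sets. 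This is where the decisive tool enters: Schrijver's root-exchange lemma for branchings (Lemma~\ref{lem:Schrijver}), which says that $B_1\cup B_2$ can be split into branchings with prescribed root sets $R_1',R_2'$ provided each source component of $B_1\cup B_2$ meets both. A contraction step in the construction of $G^*$ guarantees this side condition automatically.

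Your Stage~2 ``pair swap'' of a single edge $e\in N_i$ against a single arc $a\in B_j$ is exactly the kind of local move that the non-locality you yourself flag defeats: removing one arc from a branching can orphan an entire out-subtree, and a single incoming edge need not restore reachability for all of it. The paper never attempts such a swap; it always repartitions \emph{all} of $B_1\cup B_2$ at once via Schrijver's lemma, and the combinatorics that makes the sizes come out right lives entirely in the alternating-path analysis on the edge side. Without that lemma (or an equivalent global statement about rearranging two branchings with prescribed roots), the ``grafting reach-paths'' idea remains a hope rather than an argument. Finally, note that after the two-phase pairwise equalization the $F_i$ need not partition $E\cup A$ any more (they may have shrunk to minimal covers), so the paper has an additional step distributing the leftover edges and arcs carefully to preserve the bounds; your proposal does not account for this.
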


\begin{theorem}\label{thm:mec2}
Let $G=(V, E \cup A)$ be a mixed graph that can be partitioned into $k$ mixed edge covers. Then $G$ can be partitioned into $k$ mixed edge covers $F_1,\dots,F_k$
in such a way that,  for every $i,j \in [k]$, we have  $||F_i|-|F_j|| \leq 2$, $||N_i|-|N_j|| \leq 1$, and $||B_i|-|B_j|| \leq 2$.
\end{theorem}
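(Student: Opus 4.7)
The plan is a lexicographic exchange argument. Among all partitions of $E\cup A$ into $k$ mixed edge covers, I would choose one that lexicographically minimizes the pair $(\Phi_1,\Phi_2)$, where $\Phi_1:=\sum_i |N_i|^2$ penalizes imbalance in the edge-size and $\Phi_2:=\sum_i |F_i|^2$ is a secondary tie-breaker for the total size. My claim is that at such a minimizer all three bounds $||N_i|-|N_j||\le 1$, $||F_i|-|F_j||\le 2$, and $||B_i|-|B_j||\le 2$ are automatic: the first from $\Phi_1$-minimality, the second from $\Phi_2$-minimality once $\Phi_1$ is fixed, and the third from the identity $|B_i|-|B_j|=(|F_i|-|F_j|)-(|N_i|-|N_j|)$ together with the parity obstruction illustrated by Fig.~\ref{fig2}.

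The heart of the proof is an exchange lemma for two parts: if $(F_i,F_j)$ are mixed edge covers with $|N_i|\ge |N_j|+2$, then there exist mixed edge covers $(F_i',F_j')$ with $F_i'\cup F_j'=F_i\cup F_j$ and $|N_i'|<|N_i|$. I would first check whether some edge $e\in N_i$ is \emph{removable}, meaning that $F_i\setminus\{e\}$ is still a mixed edge cover; if so, $(F_i\setminus\{e\},\,F_j\cup\{e\})$ witnesses the swap directly. Otherwise every edge of $N_i$ is essential, and I would introduce an auxiliary digraph recording, for each vertex $v$, which edges of $N_i$ serve as its obligatory ``root'' in $F_i$. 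Because $|N_i|\ge |N_j|+2$, a counting argument in this digraph should produce an alternating walk inside $F_i\cup F_j$ that begins at an essential edge of $N_i$, alternates between arcs of $F_i$ and arcs of $F_j$ to re-route the reachability of the orphaned vertices, and terminates at an edge of $N_j$; performing the symmetric-difference swap along this walk moves one edge from $N_i$ to $N_j$ while restoring the cover property in both parts at the cost of a bounded number of arc exchanges.

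The main obstacle I anticipate is precisely this alternating-walk construction. Moving an edge $e=uv$ out of $F_i$ uncovers every vertex whose unique directed cover-path in $F_i$ passes through $u$ or $v$, so the walk must simultaneously pull arcs from $F_j$ into $F_i$ to re-route those paths and, symmetrically, guarantee that the arcs removed from $F_j$ do not uncover any vertex there. This ``push--pull'' interaction between edges and arcs, absent in both undirected matching theory and in branching theory, is what makes the mixed case genuinely harder and is what forces the $+1$ slack in $||B_i|-|B_j||$ relative to the pure branching equitable-partition theorem. Once the exchange lemma is established, a strictly arc-only version of it -- which leaves every $|N_i|$ unchanged and hence preserves $\Phi_1$-optimality -- drives $\Phi_2$ down to the bound stated in the theorem, finishing the argument.
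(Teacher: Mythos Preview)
Your framework---lexicographic minimization of $(\sum_i|N_i|^2,\sum_i|F_i|^2)$ driven by a two-cover exchange lemma---is sensible, but the proposal is not yet a proof because the exchange lemma itself is missing. You correctly identify ``precisely this alternating-walk construction'' as the main obstacle, and the sketch you give (an auxiliary digraph recording essential edges, plus ``a counting argument \dots\ should produce an alternating walk'') does not supply the mechanism. The difficulty is real: moving an edge $e=uv$ out of $F_i$ orphans every vertex whose reachability in $F_i$ passed through $u$ or $v$, and a single walk that simultaneously re-roots all of them in $F_i$ while preserving cover in $F_j$ is not something a simple counting/pigeonhole argument will produce.

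The paper resolves this with two specific tools you do not mention. First, it builds an auxiliary \emph{undirected} graph $G^*$ in which each star centre of $N_i$ is split into several copies, so that $N_i$ becomes a genuine matching $N_i^*$; then $N_1^*\cup N_2^*$ decomposes into alternating paths of ten explicit types, and the quantities $|N_1|-|N_2|$ and $|F_1|-|F_2|$ are linear combinations of the type counts. Second---and this is what replaces your push--pull walk on arcs---the arc parts are repartitioned wholesale via Schrijver's root-exchange lemma for branchings (Lemma~\ref{lem:Schrijver}): once you decide the new root sets $R_1',R_2'$ (which the alternating-path swap determines), Schrijver's lemma gives you compatible branchings $B_1',B_2'$ directly, provided every source component of $B_1\cup B_2$ meets both $R_1'$ and $R_2'$. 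A contraction step in $G^*$ is inserted precisely to guarantee this intersection condition. None of this is visible in your sketch.

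A secondary gap: your claim that an ``arc-only'' exchange exists that leaves every $|N_i|$ unchanged is too strong. In the paper's Operations~3 and~4 (the ones that decrease $||F_1|-|F_2||$), the change in $|N_1|-|N_2|$ lies in $\{0,\pm 2\}$, not always $0$; the proof has to interleave edge- and total-size moves rather than cleanly separating the two phases as you propose.
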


We now mention equitable partitioning into mixed covering forests, the other type of structure we introduced as a covering counterpart of matching forests.
Unlike mixed edge covers, mixed covering forests require acyclicity, which makes partitioning even harder.
The graph in Fig.~\ref{fig3} has a unique partition into two mixed covering forests, where edge-size is not equalized.
However, if we consider packing rather than partitioning,
then we can show that the corresponding versions of Theorems~\ref{thm:mec1} and \ref{thm:mec2} hold for mixed covering forests.
The formal statements are given in Section~\ref{sec:mcf} as Corollaries~\ref{cor:mcf1} and \ref{cor:mcf2}.

\begin{figure}[htb]
\begin{center}
   \includegraphics[width=50mm]{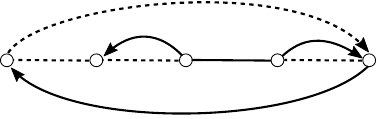}
\caption{A graph that has a unique partition into two mixed covering forests. }
\label{fig3}
\end{center}
\end{figure}

We add two more remarks about the results. First, our multicriteria equalization result is new even for bibranchings. We describe the consequences for bibranchings in Section~\ref{sec:mcf}.

Second, our results are constructive in the sense that if an initial partition $F_1,\dots,F_k$ is given, then our proof gives rise to a polynomial-time algorithm to obtain the desired partition $F_1',\dots,F_k'$ in Theorems~\ref{thm:mf1}, \ref{thm:mf2}, \ref{thm:mec1}, and \ref{thm:mec2}. Note however that it is NP-complete to decide whether a mixed graph can be partitioned into $k$ matching forests or $k$ mixed edge covers, even in the undirected case.

The rest of the paper is organized as follows. Section \ref{sec:preliminary} describes basic properties of matching forests and mixed edge covers, including a new extension of Gallai's theorem. In Section \ref{sec:algpol}, we show that a minimum weight mixed edge cover can be found in polynomial time, and we give a TDI description of the mixed edge cover polytope.  Sections \ref{sec:equitable-mf} and \ref{sec:equitable-mec} contain our results on equitable partitioning of matching forests and mixed edge covers, respectively. In the last subsection, we describe the corollaries for mixed covering forests and bibranchings.

\section{Matching Forests and Mixed Edge Covers}\label{sec:preliminary}
We describe some basic properties of matching forests and mixed edge covers.
Let $G=(V, E\cup A)$ be a mixed graph. For a subset $F\subseteq E\cup A$,
we say that $v\in V$ is {\bf covered} in $F$ if $v$ is an endpoint of some edge $e\in F$
or is the head of some arc $a\in F$.
We denote by $\covered(F)$ the set of vertices covered in $F$.

An edge set $M\subseteq E$ is a {\bf matching} (resp., {\bf edge cover}) if each vertex is covered at most once (resp., at least once) in $M$.
An arc set $B\subseteq A$ is a {\bf branching} if each vertex is covered at most once in $B$
and there is no directed cycle in $B$.
For a branching $B$, we call $R(B):=V\setminus \covered(B)$  the {\bf root set} of $B$.
Note that, in a branching $B$, any vertex is reachable from some root $r\in R(B)$
via a unique directed path (which can be of length $0$).

We provide characterizations of matching forests and mixed edge covers, where the first one is clear from the definition.
\begin{proposition}\label{prop:mf-chara}
A subset $F\subseteq E\cup A$ is a matching forest if and only if $F\cap A$ is a branching and
$F\cap E$ is a matching such that  $\covered(F\cap E)\subseteq R(F\cap A)$.
\end{proposition}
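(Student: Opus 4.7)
The plan is to prove each direction of the equivalence separately. The (only if) direction is essentially an unpacking of the matching-forest definition, while the (if) direction requires a careful cycle analysis exploiting the inclusion $\covered(F\cap E)\subseteq R(F\cap A)$ to rule out mixed edge-arc cycles.

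For the (only if) direction I would assume $F$ is a matching forest and observe that acyclicity of the underlying undirected graph of $F$ is inherited by any subset, so in particular $F\cap A$ has no directed cycle; together with the at-most-once-covered condition, this makes $F\cap A$ a branching and makes $F\cap E$ a matching. For the inclusion, if $v\in \covered(F\cap E)$ then $v$ is already covered by an edge of $F$, so no arc in $F$ may have $v$ as its head, i.e.\ $v\in R(F\cap A)$.

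For the (if) direction I would verify the two defining conditions of matching forest under the three hypotheses. The at-most-once-covered condition is a quick case analysis: a vertex covered twice in $F$ would be covered by two edges (ruled out by the matching property), by two arcs (ruled out by the branching property), or by exactly one edge and one arc (ruled out by $\covered(F\cap E)\cap \covered(F\cap A)=\emptyset$). The real work is acyclicity of the underlying undirected graph of $F$. A matching is trivially acyclic, and a branching is acyclic in the undirected sense as well, since any undirected cycle of arcs would force some vertex to be the head of two arcs on the cycle. Hence any hypothetical undirected cycle $C\subseteq F$ must contain at least one edge $e=uv$. At $u$, the other element of $C$ incident to $u$ cannot be an edge (matching property) nor an arc into $u$ (since $u\in \covered(F\cap E)\subseteq R(F\cap A)$), so it must be an arc out of $u$. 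Iterating this argument around $C$---at each successive vertex the branching property forbids a second arc entering that vertex, and the inclusion forbids any edge of $C$ to touch a vertex already acting as an arc-head---forces $C\setminus\{e\}$ to be a directed $u$-to-$v$ path in $F\cap A$ ending with an arc into $v$, contradicting $v\in R(F\cap A)$.

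The main obstacle is precisely this last propagation step: the hypothesis $\covered(F\cap E)\subseteq R(F\cap A)$ is exactly what prevents mixed edge-arc cycles, and extracting a contradiction from a purported cycle requires chaining the orientation and covering constraints carefully along $C$.
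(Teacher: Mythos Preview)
Your proof is correct. The paper omits the proof entirely, stating only that the characterization ``is clear from the definition''; your argument is exactly the careful unpacking of the definitions that justifies this remark, and in particular your cycle-propagation step correctly shows that the inclusion $\covered(F\cap E)\subseteq R(F\cap A)$ together with the branching and matching properties forces any putative undirected cycle in $F$ to yield a contradiction.
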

\begin{proposition}\label{prop:mec-chara}
A subset $F\subseteq E\cup A$ is a mixed edge cover if and only if $F\cap A$ contains a branching $B$
such that $R(B)\subseteq \covered(F\cap E)$.
\end{proposition}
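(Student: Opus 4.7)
The plan is to prove the two directions separately, using as the main tool the fact that in a branching every vertex is reachable by a unique directed path from its root.

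For the easy $(\Leftarrow)$ direction, I would assume $F\cap A$ contains a branching $B$ with $R(B)\subseteq \covered(F\cap E)$, and verify the definition of mixed edge cover directly. Pick any $v\in V$. Since $B$ is a branching on $V$, there exists a root $r\in R(B)$ and a directed path from $r$ to $v$ inside $B\subseteq F\cap A$ (possibly of length $0$). The hypothesis $R(B)\subseteq \covered(F\cap E)$ guarantees that $r$ is an endpoint of some $e\in F\cap E$, so this path witnesses the mixed edge cover condition at $v$.

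For the $(\Rightarrow)$ direction, I would construct $B$ via a BFS-style layering. Let $U:=\covered(F\cap E)$. The mixed edge cover property says exactly that every $v\in V$ is reachable in $F\cap A$ from $U$ (via a length-$0$ path if $v\in U$). Define $d(v)$ to be the minimum length of such a directed path. Then $d(v)=0$ for $v\in U$, and for each $v\notin U$ there exists an arc $(u,v)\in F\cap A$ with $d(u)=d(v)-1$; fix one such arc $a_v$ and set $B:=\{a_v : v\in V\setminus U\}\subseteq F\cap A$.

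It then remains to check that $B$ is a branching with $R(B)\subseteq U$. By construction, every vertex in $V\setminus U$ has in-degree exactly $1$ in $B$, and every vertex in $U$ has in-degree $0$, so the in-degree condition holds and $\covered(B)=V\setminus U$, giving $R(B)=U=\covered(F\cap E)$. The only step requiring a brief argument is acyclicity: any directed cycle $v_1\to v_2\to\cdots\to v_k\to v_1$ in $B$ would force $d(v_{i+1})=d(v_i)+1$ along the cycle, producing $d(v_1)=d(v_1)+k$, a contradiction. This is the one place that needs care, but the distance-based construction handles it cleanly, completing the equivalence.
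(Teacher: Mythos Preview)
Your proposal is correct and follows essentially the same idea as the paper. For $(\Leftarrow)$ the arguments coincide; for $(\Rightarrow)$ the paper contracts $\covered(F\cap E)$ to a single vertex $r$ and invokes the existence of an $r$-arborescence in the contracted graph, whereas you build the branching explicitly as a shortest-path forest via the distance function $d$ --- these are two phrasings of the same construction, yours being a bit more self-contained and the paper's a bit more concise.
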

\begin{proof}
The ``if'' part is clear because every $v\in R(B)$ is covered by an edge and every $v\in V\setminus R(B)$ is
reachable from $R(B)$ in $B$. For the ``only if'' part, suppose that $F$ is a mixed edge cover.
By definition, for any $v\in V\setminus \covered(F\cap E)$, there is a directed path from $\covered(F\cap E)$ to $v$.
This means that, if we contract $\covered(F\cap E)$ to a new vertex $r$, then there exists an $r$-arborescence.
In the original graph, this arborescence corresponds to a branching $B$ such that $\covered(B)\supseteq V\setminus \covered(F\cap E)$,
and hence $R(B)=V\setminus \covered(B)\subseteq \covered(F\cap E)$.
\end{proof}

As mentioned in the Introduction, mixed edge covers and mixed covering forests have the following relationship.
\begin{proposition}\label{prop:equivalence}
Every mixed covering forest is a mixed edge cover.
Moreover, a subset $F\subseteq E\cup A$ is a minimal  mixed edge cover if and only if it is a minimal mixed covering forest.
\end{proposition}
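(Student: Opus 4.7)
The plan is to handle the two assertions in turn. For the first, given a mixed covering forest $F$ and any $v \in V$, I will construct a directed path from an edge endpoint to $v$ by iteratively tracing back through incoming arcs: starting from $v$, as long as the current vertex $u$ is not an endpoint of any edge in $F \cap E$, the covering condition forces $u$ to be the head of some arc in $F \cap A$, and we move to its tail. Acyclicity of $F$ prevents revisiting a vertex, so the procedure terminates at an edge endpoint; reversing yields the required directed path.

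For the minimality equivalence, one direction is short. If $F$ is a minimal mixed covering forest, then $F$ is a mixed edge cover by the first assertion. Any $F' \subsetneq F$ that were a mixed edge cover would inherit acyclicity from $F$ and would cover every vertex---this is automatic from Proposition~\ref{prop:mec-chara}, since each vertex is either a root of the witnessing branching (covered by an edge) or the head of some arc in the branching (covered by an arc). Hence $F'$ would be a strictly smaller mixed covering forest, contradicting the minimality of $F$.

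The converse is more delicate. Let $F$ be a minimal mixed edge cover. Applying Proposition~\ref{prop:mec-chara} together with minimality yields two structural facts: (a) $F \cap A$ is itself a branching, because any arc of $F \cap A$ that provides a second arc into some vertex or that lies on a directed cycle is absent from some witnessing branching $B \subseteq F \cap A$ and hence can be deleted while preserving the mixed-edge-cover property; and (b) for every $a \in F \cap A$, the head of $a$ is not covered by any edge of $F \cap E$, for otherwise $(F \cap A) \setminus \{a\}$ would still be a branching with all roots edge-covered, making $F \setminus \{a\}$ a mixed edge cover. Together these give $\covered(F \cap A) \cap \covered(F \cap E) = \emptyset$.

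The main obstacle is then to show that $F$ itself is acyclic. Suppose $C$ is a cycle in the underlying graph of $F$. An all-edge cycle is ruled out since each of its vertices would be covered by at least two edges of $C$, so any edge of $C$ could be deleted without destroying coverage. Hence $C$ contains an arc $a = xy$ with head $y$; by (b), $y$ is not an edge endpoint, so the other element of $C$ incident to $y$ is again an arc $a'$, and by (a), $y$ is the tail of $a'$. Iterating along $C$, every element of $C$ must be an arc, producing a cycle in the branching $F \cap A$---a contradiction. Combined with the observation (from the second paragraph) that every vertex in any mixed edge cover is covered, this proves $F$ is a mixed covering forest. Minimality then transfers via the first assertion: any $F' \subsetneq F$ that were a mixed covering forest would be a strictly smaller mixed edge cover, contradicting minimality of $F$.
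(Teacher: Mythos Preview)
Your proof is correct and follows essentially the same strategy as the paper's: both verify the first assertion by back-tracing from a vertex through incoming arcs until an edge endpoint is reached (the paper frames this as a counting contradiction on the set of vertices from which $v$ is reachable, but the content is identical), and both prove acyclicity of a minimal mixed edge cover by first showing that each arc-head is covered only by that arc and then propagating around a hypothetical cycle to force it to consist entirely of consistently oriented arcs. The only cosmetic difference is that you invoke Proposition~\ref{prop:mec-chara} explicitly to conclude $F\cap A$ is itself a branching (your fact (a)), whereas the paper obtains the equivalent statement directly from minimality; one small wording fix in (a): it is cleaner to fix a single witnessing branching $B$ and observe that minimality forces $F\cap A=B$, rather than claiming that \emph{every} offending arc lies outside \emph{some} witnessing branching.
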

\begin{proof}
For the first claim, suppose for contradiction that a mixed covering forest $F$ is not a mixed edge cover.
Then, some vertex $v$ is unreachable from $F\cap E$. Let $U$ be the set of vertices from which $v$ is reachable;
then no $u\in U$ is incident to edges. As $F$ is a covering forest,
every $u\in U$ is covered by some arc $a\in F\cap A$, whose tail is also in $U$ by the definition of $U$.
Therefore, there are at least $|U|$ arcs whose head and tail are both in $U$, which contradicts the acyclicity of $F$.

For the ``if'' part of the second claim, take a minimal mixed covering forest $F$.
This is a mixed edge cover as just shown.
The minimality of $F$ implies that any proper subset of $F$ has some uncovered vertex,
and hence is not a mixed edge cover. So $F$ is a minimal edge cover.

For the ``only if'' part, let $F$ be a minimal mixed edge cover.
By the first claim, it suffices to show that this is a mixed covering forest.
Clearly, all vertices are covered at least once because they are reachable from $\covered(F\cap E)$,
so we have to show acyclicity. Observe that the minimality of $F$ implies that the head $v\in V$
of any arc $a\in F\cap A$ is covered only by $a$ (otherwise we can remove $a$ or another arc whose head is $v$).
Suppose, to the contrary, that $C\subseteq F$ is a cycle in the underlying graph.
If all elements of $C$ are edges, then we can remove at least one edge, which contradicts minimality.
Therefore, $C$ contains some arc $a$. By the above observation, the head $v$ of $a$ is covered only by $a$,
so the other element in $C$ incident to $v$ should be an arc whose tail is $v$.
By repeating this argument, we see that all elements of $C$ are arcs. Then all vertices in $C$ are only covered by arcs in $C$, which means that they are unreachable from $\covered(F\cap E)$, a contradiction.
\end{proof}

Let us define the mix-size $\mixsize{F}$ of any $F\subseteq E\cup A$ by $\mixsize{F}:=|F\cap E|+\frac{1}{2}|F\cap A|$.
Using this mix-size, we can generalize Gallai's well known  theorem on the relation between maximum matching and minimum edge cover to mixed graphs.
\begin{theorem}\label{thm:Gallai}
For a mixed graph $G=(V, E \cup A)$ that admits a mixed edge cover, let
$\nu(G) :=\max\{~\mixsize{F}: \text{$F$ is a matching forest in $G$}~\}$ and
$\rho(G):=\min\{~\mixsize{H}:$ $H$ is a mixed edge cover in $G~\}$.
Then we have $\nu(G)+\rho(G)=|V|$.
\end{theorem}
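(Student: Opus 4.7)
The plan is to establish the two inequalities $\nu(G)+\rho(G)\geq|V|$ and $\nu(G)+\rho(G)\leq|V|$ separately. The first direction --- extracting a matching forest from a minimum mixed edge cover --- will be the easier one, while the second --- building a mixed edge cover from a maximum matching forest --- will be the main technical part.

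For $\nu+\rho\geq|V|$, I will take a minimum mixed edge cover $F_0$. By Proposition~\ref{prop:equivalence} and the minimality argument used in its proof, $F_0$ is acyclic and the head of every arc of $F_0$ is covered only by that arc. These facts force, in each connected component $C$ of $F_0$, the edges restricted to $C$ to form a minimal edge cover of $\covered(F_0\cap E)\cap V(C)$, hence a disjoint union of stars; a tree-counting argument then shows that exactly one star appears per component. I extract a matching forest $F\subseteq F_0$ by selecting one edge per star together with all arcs of $F_0$. Proposition~\ref{prop:mf-chara} confirms $F$ is a matching forest, and a direct computation gives $\mixsize{F}=c(F_0)+|F_0\cap A|/2=|V|-\rho$, where $c(F_0)$ denotes the number of connected components of $F_0$. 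Therefore $\nu\geq|V|-\rho$.

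For $\nu+\rho\leq|V|$, I will take a maximum matching forest $F$ and construct a mixed edge cover $F'\supseteq F$ of mix-size at most $|V|-\nu$. Since $|\covered(F)|=2\nu$, the set $U:=V\setminus\covered(F)$ has size $|V|-2\nu$. A structural argument using Proposition~\ref{prop:mf-chara} --- tracing arc orientations along any tree path from an uncovered vertex to an edge endpoint of $F$ --- shows that every $v\in U$ is either the root of an arc-only connected component of $F$, or an isolated vertex of $F$. For each $v\in U$ I add to $F$ a single element of $G$: either an edge incident to $v$ (contributing $1$ to the mix-size) or an arc with head $v$ whose tail is already reachable from an edge endpoint in the current $F'$ (contributing $1/2$). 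The hypothesis that $G$ admits a mixed edge cover guarantees that such choices exist, and once the root of an arc-only component becomes reachable, the remaining vertices of that component become reachable through the existing arcs of $F$, so exactly $|U|$ additions suffice. The total added mix-size is at most $|U|$, yielding $\rho\leq\nu+|U|=|V|-\nu$.

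The main obstacle lies in Direction~2: showing that the additions for all $v\in U$ can be scheduled consistently. A vertex $v\in U$ with no incident edge in $G$, and whose every incoming arc in $G$ has its tail in another unresolved component, creates a dependency; a cyclic chain of such dependencies among $U$-vertices would stall the algorithm. Such a cycle, however, would imply that its vertices are unreachable from any edge endpoint in any subset of $E\cup A$, contradicting the hypothesis that $G$ admits a mixed edge cover. Hence the dependency digraph is acyclic and can be processed in topological order, completing the construction within the mix-size budget $|U|$.
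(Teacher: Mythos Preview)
Your Direction~1 ($\nu+\rho\ge|V|$) is correct; the explicit structural analysis (one star per tree component of a minimal mixed edge cover, then one edge per star plus all arcs) is a bit more concrete than the paper's argument via an inclusion-maximal matching forest inside $H^*$, but both work.

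Direction~2 has a genuine gap. You start from an \emph{arbitrary} maximum matching forest $F$ and try to add, for each uncovered $v$, either an incident edge or an arc with head $v$ whose tail is already reachable. But this choice need not exist. By maximality of $F$, every arc of $G$ with head $v$ must have its tail inside $C_v$ itself (an arc $w\to v$ with $w\notin C_v$ could simply be added to $F$, contradicting maximality). So if $v$ also has no incident edge in $G$, no legal addition is available; this is a self-loop in your dependency picture, not a cycle among distinct $U$-vertices, and your acyclicity argument does not rule it out. A concrete instance: $V=\{u,v,w,x\}$, $E=\{uw,wx\}$, $A=\{u\to v,\ v\to u\}$. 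Here $G$ admits a mixed edge cover, $F=\{wx,\ v\to u\}$ is a maximum matching forest (mix-size $3/2$, which is optimal since $v$ and $x$ cannot both be covered), $U=\{v\}$, $v$ has no incident edge, and the only arc into $v$ is $u\to v$ with $u\in C_v$. Your procedure stalls; note that adding an edge at $u$ instead would not help either, since the arcs of $C_v$ point away from $v$.

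The paper handles this by choosing, among all maximum matching forests, one $F^*$ minimizing $D(F)=\sum_{z\notin\covered(F)}\dist(z)$. If some uncovered $v$ had $\dist(v)\ge 1$, one takes the last arc $a$ of a shortest path to $v$; since $F^*+a$ is not a matching forest, it contains a directed cycle through $a$, and removing the arc $a'$ preceding $a$ on that cycle yields another maximum matching forest with strictly smaller $D$. Hence $D(F^*)=0$, every uncovered vertex lies in $\covered(E)$, and adding one incident edge per uncovered vertex suffices within the budget $|U|=|V|-2\nu$ that you correctly computed.
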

\begin{proof}
For any vertex $v$, we denote by $\dist(v)$ the minimum length of a directed path from $\covered(E)$ to $v$.
If $G$ admits a mixed edge cover, then $\dist(v)$ is finite for every $v\in V$.
For any $v\in V$, we have $\dist(v)=0$ if and only if $v\in \covered(E)$.

\begin{claim}
Among matching forests satisfying $\mixsize{F^*}=\nu(G)$, let $F^*$ minimize
\[\Dist(F):=\sum\set{\dist(v)|v\in V\setminus \covered(F)}.\]
Then $\Dist(F^*)=0$, and hence $V\setminus \covered(F^*)\subseteq \covered(E)$.
\end{claim}
Suppose, to the contrary, $\Dist(F^*)>0$,
i.e.\ $\dist(v)\geq 1$ for some $v\in V\setminus \covered(F^*)$.
Take a shortest directed path $P$ from $\covered(E)$ to $v$ and let $a\in P$ be the arc whose head is $v$.
Since $v$ is uncovered in $F^{*}$, every vertex is covered at most once in $F^{*}+a$,
which is not a matching forest by the maximality of $F^{*}$.
This means that there exists a directed cycle $C$ with $a\in C\subseteq F^{*}+a$.
Let $a'\in C$ be the arc preceding $a$ in $C$ and let $u$ be the head of $a'$ (which is also the tail of $a$).
Then $F':=F^{*}+a-a'$ is a matching forest and satisfies $\mixsize{F'}=\mixsize{F^*}=\nu(G)$.
Because $\covered(F')=\covered(F^{*})-u+v$, we have $\Dist(F')=\Dist(F^*)+\dist(u)-\dist(v)$.
As $u$ is on the shortest path to $v$, we see $\dist(u)\leq \dist(v)-1$, and hence
$\Dist(F')<\Dist(F^*)$, which contradicts the choice of $F^*$. The claim is proved.

By this claim, every $v\in V\setminus\covered(F^*)$ is incident to some edge.
\begin{claim}
$\rho(G)\leq |V|-\nu(G)$.
\end{claim}
Let $H$ be a superset of $F^*$ obtained by adding an arbitrary incident edge for each $v\in V\setminus\covered(F^*)$.
Then $H$ is a mixed edge cover. To see this,
we show that any $v\in V$ is reachable from $\covered(H\cap E)$ in $F^*\cap A$.
By Proposition~\ref{prop:mf-chara}, $F^*\cap A$ forms a branching.
Let $r\in V$ be the root of the component containing $v$ (which can be $v$ itself).
Then $v$ is reachable from $r$ in $F^*\cap A$. Because $r$ is not covered by any arc,
$r\in \covered(F^*\cap E)$ or $r\in V\setminus \covered(F^*)$.
Both of them imply $r\in \covered(H\cap E)$ by the definition of $H$,
and hence $v$ is reachable from $\covered(H\cap E)$.
Thus, $H$ is a mixed edge cover, and we have $\mixsize{H}\geq \rho(G)$.

Because $F^*$ has $2\mixsize{F^*}$ heads,  we have $|V\setminus\covered(F^*)|=|V|-2\mixsize{F^*}$,
and $\mixsize{H}=\mixsize{F^*}+(|V|-2\mixsize{F^*})$ by the construction of $H$.
Hence, we obtain $\rho(G)\leq \mixsize{H}=|V|-\mixsize{F^*}=|V|-\nu(G)$.

\begin{claim}
$\rho(G)\geq |V|-\nu(G)$.
\end{claim}
Take a mixed edge cover with $\mixsize{H^*}=\rho(G)$
and let $F$ be an inclusion-wise maximal matching forest in $H^*$.
By the minimality of $H^*$, the head of any arc $a\in H^*\cap A$ is covered only by $a$ in $H^*$.
Also, Proposition~\ref{prop:equivalence} implies that the underlying graph of $H^*$ has no cycle.
Thus $F$ includes $H^*\cap A$, and hence $H^*\setminus F \subseteq E$ and $\mixsize{H^*}-\mixsize{F}=|H^*\setminus F|$.
By the maximality of $F$, any edge $e\in H^*\setminus F$ has at most one endpoint in $V\setminus \covered(F)$,
while $V\setminus \covered(F)\subseteq \covered(H^*\setminus F)$.
Then, $|V\setminus \covered(F)|\leq |H^*\setminus F|$, which implies
$|V|-2\mixsize{F}=|V\setminus \covered(F)|\leq |H^*\setminus F|=\mixsize{H^*}-\mixsize{F}$.
Thus, $\rho(G)=\mixsize{H^*}\geq |V|-\mixsize{F}\geq |V|-\nu(G)$.
\end{proof}

\section{Algorithms and Polyhedral Descriptions}\label{sec:algpol}
\subsection{Previous Results on Matching Forests}
We introduce some known results on matching forests that will be used in our proofs for mixed edge covers in Section~\ref{sec:mic-property}. Giles \cite{G82b} showed that the maximum weight matching forest problem is tractable.

\begin{theorem}[Giles \cite{G82b}]\label{thm:mfalgo}
There is a strongly polynomial-time algorithm to find a maximum weight matching forest or a maximum weight perfect matching forest, for any weight function $w:E \cup A \to \R$.
\end{theorem}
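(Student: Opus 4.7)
The plan is to design a combinatorial primal-dual algorithm in the spirit of Edmonds' weighted matching and arborescence algorithms, unified to handle both edge types of the mixed graph. As preprocessing, I would reduce to the perfect matching forest case by adding, for each vertex $v$, an auxiliary dummy arc of sufficiently extreme cost so that a perfect matching forest always exists in the enlarged graph and an optimum projects back to an optimum (not necessarily perfect) solution in the original; after a weight shift we may further assume non-negative weights. This reduction already shows that the two versions of the problem are equivalent up to a polynomial overhead.

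For the perfect case, the algorithm would maintain a matching forest $F$ together with a dual-feasible vector for the LP relaxation. The dual constraints come in two flavours: cover-at-most-once constraints at each vertex, and odd-set inequalities unifying the blossom constraints of matching with the dicut constraints of branching. By Proposition~\ref{prop:mf-chara}, $F$ decomposes into a branching $F\cap A$ and a matching $F\cap E$ whose endpoints are roots of $F\cap A$. Each phase either augments $F$ along an alternating walk that flips the matching part and simultaneously reroutes the branching part so as to cover one more vertex, or performs a uniform dual update that opens new tight elements, possibly accompanied by shrinking a newly-tight blossom or contracting a directed cycle. Correctness follows from complementary slackness combined with a TDI polyhedral description of the matching forest polytope (the description later given by Schrijver), and polynomiality follows from the standard counting argument: each phase either increases $|\covered(F)|$ by one or strictly refines the contracted structure, and at most $O(|V|)$ contractions are nested at any time.

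The main obstacle will be the clean simultaneous handling of blossoms (from the matching side) and directed cycles (from the branching side), particularly when an alternating walk enters a pseudonode that contains elements of both kinds. A careful unshrinking procedure, performed in reverse order of contraction, must recover a valid matching forest in the original graph while preserving both the acyclicity of the underlying undirected graph and the cover-at-most-once property. The subtlest case is that of a pseudonode receiving an incoming arc during augmentation: the unshrinking must route the arc to a single vertex inside the pseudonode and then reroute the internal matching so that the remaining vertices are covered by edges internal to it, extending Edmonds' blossom unshrinking to coexist with branching arcs without violating the root-set condition $\covered(F\cap E)\subseteq R(F\cap A)$.
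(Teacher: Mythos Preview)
The paper does not prove this theorem: it is quoted as a prior result of Giles and is used as a black box (together with Schrijver's TDI description) to derive consequences for mixed edge covers. There is therefore no ``paper's own proof'' to compare your proposal against.

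That said, your outline is in the right spirit and broadly matches what Giles actually does: a primal-dual scheme that simultaneously handles blossoms coming from the matching side and directed cycles coming from the branching side, with shrinking and unshrinking of both kinds of pseudonodes. Two remarks are worth making. First, your reduction from the non-perfect to the perfect case via ``an auxiliary dummy arc for each vertex'' needs more care than you indicate: an arc has a tail as well as a head, and you must ensure that the auxiliary elements themselves can be covered while preserving acyclicity and the condition $\covered(F\cap E)\subseteq R(F\cap A)$; a clean gadget (e.g.\ a single new source with outgoing arcs plus one incident edge) works, but the one-line description you give does not quite specify it. Second, appealing to Schrijver's TDI result for correctness is logically fine today but is not how Giles proceeded---his 1982 paper predates the TDI proof by almost two decades, and he established optimality directly from his own polyhedral description and the complementary slackness maintained by the algorithm. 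What you have written is a credible plan rather than a proof; the part you flag as the ``main obstacle'' (unshrinking a pseudonode that mixes blossom and dicycle structure while respecting both acyclicity and the root-set condition) is indeed where essentially all the work lies, and filling it in is the content of Giles' paper.
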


Giles also gave a linear description of the matching forest polytope and characterized its facets \cite{G82b,G82c}. It was later shown by Schrijver that this system is totally dual integral (TDI). To state the result, we call a collection of subpartitions $\cS_1,\cS_2,\dots,\cS_k$ \emph{laminar} if for any $i$ and $j$, one of the following is true:
\begin{itemize}
  \item for every $X \in \cS_i$, there exists $Y \in \cS_j$ such that $X \subseteq Y$,
  \item for every $Y \in \cS_j$, there exists $X \in \cS_i$ such that $Y \subseteq X$,
  \item $X\cap Y=\emptyset$ for every $X \in \cS_i$ and $Y \in \cS_j$.
\end{itemize}
For a subpartition $\cS$, we use $|\cS|$ to denote the number of classes, and $\cS$ is called an \emph{odd subpartition} if $|\cS|$ is odd.
\begin{theorem}[Schrijver \cite{Schr00}]\label{thm:mftdi}
  For a mixed graph $G=(V,E \cup A)$ and a vertex $v \in V$, let $\delta^{\mathrm{head}}(v)$ denote the union of the set of edges in $E$ incident to $v$ and the set of arcs in $A$ with head $v$. The following is a TDI description of the convex hull of matching forests in a mixed graph $G=(V,E \cup A)$.
  \begin{align}
  x_e &\geq 0\ \ \text{for every $e\in E\cup A$} \label{eq:mftdi1}\\
  x (\delta^{\mathrm{head}}(v))& \leq 1\ \ \text{for every $v\in V$}\label{eq:mftdi2}\\
  x(E[\cup \cS ])+\sum_{Z \in \cS} x(A[Z]) &\leq |\cup \cS |-\lceil |\cS|/2\rceil\ \ \text{for every subpartition $\cS$ of $V$}.\label{eq:mftdi3}
  \end{align}
Considering the maximization problem for some cost function $c:E \cup A \to \Z$, there is an integer optimal dual solution such that the support of the dual variables $y$ corresponding to \eqref{eq:mftdi3} is laminar and consists of odd subpartitions.
\end{theorem}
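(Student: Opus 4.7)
The plan is to establish validity of \eqref{eq:mftdi1}--\eqref{eq:mftdi3} and TDI together, via the uncrossing template standard for matching-type TDI theorems. Validity of \eqref{eq:mftdi1}--\eqref{eq:mftdi2} is immediate. For \eqref{eq:mftdi3}, fix a matching forest $F$ and a subpartition $\cS$, and consider $F_0 := F \cap (E[\cup\cS] \cup \bigcup_{Z\in\cS}A[Z])$, which is itself a matching forest. The left-hand side equals $|F_0|$; a counting argument combining the head constraint (each vertex of $\cup\cS$ is a head at most once), acyclicity inside each class, and a parity argument on $|\cS|$ gives the desired bound $|\cup\cS|-\lceil|\cS|/2\rceil$.

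For TDI, fix an integer cost function $c$ and take an optimal dual solution $(y,z)$ of minimum $\|y\|_1$, breaking ties by lexicographically minimizing a secondary objective $\sum_\cS y_\cS\,\Phi(\cS)$, where $\Phi$ measures how ``spread out'' $\cS$ is across $V$. The crux is an uncrossing step: if two subpartitions $\cS_1,\cS_2$ in the support of $y$ are not in the laminar relation of the theorem, I would produce replacements $\cS_1',\cS_2'$, still odd subpartitions, that preserve dual feasibility, keep the dual value, and strictly decrease $\Phi$. The candidates are built from unions and intersections of crossing classes of $\cS_1$ and $\cS_2$, grouped so that the parity of $|\cS|$ is maintained. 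Once laminarity is achieved, primal--dual complementary slackness together with induction on $\sum_e|c_e|$, via deletion of an element lying in a tight constraint, finishes the TDI proof.

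The main obstacle is the uncrossing. In Edmonds' matching TDI theorem the uncrossed objects are single odd sets and the right-hand side $\lfloor|U|/2\rfloor$ is supermodular as a set function; here the right-hand side $|\cup\cS|-\lceil|\cS|/2\rceil$ depends jointly on the support of $\cS$ and on $|\cS|\bmod 2$, so the replacements have to respect $\cup\cS$ and the parity of $|\cS|$ simultaneously. I would proceed by case analysis on whether two crossing classes have intersection of even or odd size, redistributing the remaining classes to keep both $|\cS_1'|$ and $|\cS_2'|$ odd. A further subtlety, absent in the pure matching case, is that edges of $F$ joining two different classes of $\cS$ do not appear on the left of \eqref{eq:mftdi3}, so the supermodular-style inequality that underwrites the swap must combine standard edge-set supermodularity with the branching structure of $F\cap A$ inside each class.
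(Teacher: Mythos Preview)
The paper does not prove this theorem: it is stated as a result of Schrijver \cite{Schr00} and used as a black box (to derive Corollary~\ref{cor:pmftdi} and then Theorem~\ref{thm:mectdi}). There is therefore no proof in the paper to compare your proposal against.

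As to the proposal itself: the outline is the standard uncrossing template, and it is the right shape for a TDI argument, but what you have written is a plan rather than a proof. The genuine difficulty is exactly the step you flag as the ``main obstacle'': you need a concrete uncrossing operation on pairs of subpartitions that (i) preserves dual feasibility for both the edge term $x(E[\cup\cS])$ and the arc term $\sum_{Z\in\cS}x(A[Z])$, (ii) does not decrease the dual objective $\sum_\cS(|\cup\cS|-\lceil|\cS|/2\rceil)y_\cS$, and (iii) strictly improves your potential $\Phi$. You have not specified what $\cS_1',\cS_2'$ actually are, nor verified any of (i)--(iii). The interaction between the parity of $|\cS|$ and the arc constraints inside classes makes this substantially more delicate than the Edmonds matching case, and a vague ``case analysis on whether two crossing classes have intersection of even or odd size'' does not by itself determine how to redistribute the remaining classes while keeping both replacements odd. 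Schrijver's original proof in \cite{Schr00} carries this out; if you want to supply an independent proof you would need to actually execute the uncrossing, not just announce its existence.
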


In general, it is known that a TDI description remains TDI when some inequalities are replaced by equalities \cite{SchrBook}.
By this fact, Theorem~\ref{thm:mftdi} implies the following TDI description of perfect matching forests, where \eqref{eq:pmftdi3} is obtained by
subtracting \eqref{eq:mftdi3} from the summation of \eqref{eq:pmftdi2} on $\cup \cS$.

\begin{corollary}\label{cor:pmftdi}
For a mixed graph $G=(V,E \cup A)$, the following is a TDI description of the convex hull of perfect matching forests.
 \begin{align}
  x_e &\geq 0 \ \ \text{for every $e\in E\cup A$}\label{eq:pmftdi1}\\
  x (\delta^{\mathrm{head}}(v))& = 1\ \ \text{for every $v\in V$}\label{eq:pmftdi2}\\
  x(E[\cup \cS ])+x(\delta_E(\cup \cS))+\sum_{Z \in \cS} x(\delta^{in}_A(Z)) &\geq \lceil |\cS|/2\rceil\ \ \text{for every subpartition $\cS$ of $V$}.\label{eq:pmftdi3}
  \end{align}
For any cost function $c:E \cup A \to \Z$, there is an integer optimal dual solution such that the support of the dual variables $y$ corresponding to \eqref{eq:pmftdi3} is laminar and consists of odd subpartitions.
\end{corollary}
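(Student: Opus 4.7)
The plan is to derive the corollary from Theorem~\ref{thm:mftdi} in two steps, using only standard TDI-preserving operations.

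First I would invoke the result cited in the text (Schrijver, \emph{Theory of Linear and Integer Programming}): if a system is TDI, then replacing any subset of its $\leq$-inequalities by equalities yields another TDI system. Applied to Theorem~\ref{thm:mftdi}, this lets me promote each constraint $x(\delta^{\mathrm{head}}(v))\leq 1$ to $x(\delta^{\mathrm{head}}(v))=1$. The integer points satisfying the resulting system are exactly the matching forests $F$ in which every vertex is the head of \emph{precisely} one edge or arc, i.e., the perfect matching forests, which gives the convex-hull claim in the (pre-rewritten) form \eqref{eq:pmftdi1}, \eqref{eq:pmftdi2}, \eqref{eq:mftdi3}.

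Next I would rewrite \eqref{eq:mftdi3} using the new equalities, as suggested in the excerpt. Fix a subpartition $\cS$ with $U:=\cup\cS$. Summing \eqref{eq:pmftdi2} over $v\in U$, each edge of $E[U]$ is counted twice (once at each endpoint), each edge of $\delta_E(U)$ once, each arc of $A[Z]$ once, and each arc of $\delta^{\mathrm{in}}_A(Z)$ once for $Z\in\cS$, while other edges/arcs contribute nothing. Thus
\begin{equation*}
2x(E[U])+x(\delta_E(U))+\sum_{Z\in\cS}\bigl(x(A[Z])+x(\delta^{\mathrm{in}}_A(Z))\bigr)\;=\;|U|.
\end{equation*}
Subtracting \eqref{eq:mftdi3} from this identity yields exactly \eqref{eq:pmftdi3}. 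Hence, in the presence of the equalities \eqref{eq:pmftdi2}, the inequality \eqref{eq:mftdi3} is equivalent to \eqref{eq:pmftdi3}, so the systems describe the same polytope.

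For the TDI claim, I would argue that adding an integer linear combination of equalities to an inequality preserves total dual integrality. Concretely, given an integral cost $c$ and an integer optimal dual $(\alpha,\beta,\gamma)$ for the promoted system (with $\alpha$ for \eqref{eq:pmftdi1}, $\beta\in\Z^V$ free for \eqref{eq:pmftdi2}, and $\gamma\in\Z_{\geq 0}^{\cdot}$ for \eqref{eq:mftdi3} laminar on odd subpartitions by Theorem~\ref{thm:mftdi}), the dual for the rewritten system is obtained by keeping $\alpha$ and $\gamma$ unchanged and shifting $\beta_v\mapsto \beta_v-\sum_{\cS:\,v\in\cup\cS}\gamma_\cS$, which remains integral. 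The new dual has the same support on the subpartition variables as $\gamma$, so laminarity and oddness of that support are inherited directly from Theorem~\ref{thm:mftdi}.

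The only delicate step is the algebraic identity that turns \eqref{eq:mftdi3} into \eqref{eq:pmftdi3}; once that computation is in place, TDI and the laminar/odd structure come essentially for free from the substitution on the dual side. No independent analysis of the perfect matching forest polytope is needed.
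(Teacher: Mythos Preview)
Your proposal is correct and follows exactly the route the paper indicates: promote \eqref{eq:mftdi2} to equalities (TDI is preserved), then replace \eqref{eq:mftdi3} by the equivalent inequality obtained by subtracting it from the sum of the new equalities over $\cup\cS$; your verification of the algebraic identity and the dual shift that carries over integrality and the laminar/odd support of $\gamma$ is precisely the detail the paper leaves implicit. (One cosmetic point: depending on the primal sign convention, the shift on $\beta_v$ may be $+\sum_{\cS:\,v\in\cup\cS}\gamma_\cS$ rather than $-$, but this does not affect integrality or the conclusion.)
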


\subsection{Algorithmic and Polyhedral Properties of Mixed Edge Covers}\label{sec:mic-property}

We first show that there is a close relationship between mixed edge covers and perfect matching forests in a modified graph. This allows us to find a minimum weight mixed edge cover in strongly polynomial time, and to give a TDI description of the convex hull of mixed edge covers.

Given a mixed graph $G=(V, E \cup A)$ with weights $w:E \cup A \to \R_+$,
we construct an auxiliary mixed graph $H=(V \cup V',E\cup A \cup E' \cup A')$ with costs $c$ on $E\cup A\cup E'\cup A'$. Let $V'$ be a copy of $V$, and let $E'$ be the perfect matching between corresponding vertices of $V$ and $V'$, with costs
$c(vv'):=\min_{uv \in E} w(uv)$ (the cost is infinite if there is no such edge). For $uv \in E \cup A$, let $c(uv)=w(uv)$. Finally, let $A'$ consist of arcs $uv'$ for every $u \in V$ and $v' \in V'$, with cost $c(uv')=0$.

\begin{lemma}\label{lem:pfm-mec}
If $G$ has a mixed edge cover, then the minimum weight of a mixed edge cover in $G$ equals the minimum cost of a perfect matching forest in $H$.
\end{lemma}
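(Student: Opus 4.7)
The plan is to prove the two inequalities separately by explicit constructions. For the direction ``$\geq$'', let $F$ be a minimum weight mixed edge cover in $G$; since $w\geq 0$, $F$ may be taken to be inclusion-wise minimal. By Proposition~\ref{prop:equivalence}, $F$ is then a minimal mixed covering forest, so $B:=F\cap A$ is a branching with some root set $R$; moreover, since the head of any arc in $B$ is covered only by that arc, no edge of $F\cap E$ can have an endpoint in $V\setminus R$. Hence $F\cap E$ is a minimal edge cover of $R$ inside $G[R]$, and therefore decomposes into a disjoint union of stars $S_1,\dots,S_m$.

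From this decomposition I build a perfect matching forest $\hat F$ in $H$ as follows: keep $B$; for each star $S_j$ with center $c_j$ and leaves $\ell_{j,1},\dots,\ell_{j,k_j}$, keep only the edge $c_j\ell_{j,1}$; for each remaining leaf $\ell_{j,i}$ with $i\geq 2$, add the edge $\ell_{j,i}\ell_{j,i}'\in E'$; finally, for each vertex of $V'$ still uncovered, add an arbitrary arc of $A'$ with head there. One checks that every vertex of $V\cup V'$ is covered exactly once, and acyclicity holds because every $v'\in V'$ has degree $1$ in $\hat F$, so any undirected cycle in $\hat F$ would lie within $V$; but the $V$-part of $\hat F$ is a subset of the acyclic $F$. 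Using $c(\ell_{j,i}\ell_{j,i}')=\min_{u\ell_{j,i}\in E}w(u\ell_{j,i})\leq w(c_j\ell_{j,i})$ together with $c\equiv 0$ on $A'$ yields $c(\hat F)\leq w(F)$.

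For the reverse inequality, given a perfect matching forest $\hat F$ in $H$ of finite cost, set
\[
F:=\bigl(\hat F\cap(E\cup A)\bigr)\cup\{\,e_v : vv'\in\hat F\cap E'\,\},
\]
where $e_v$ denotes a minimum weight edge of $G$ incident to $v$ (which exists because $c(vv')<\infty$). Since $w(e_v)=c(vv')$ and $A'$-arcs cost nothing, one gets $w(F)\leq c(\hat F)$. To verify that $F$ is a mixed edge cover, apply Proposition~\ref{prop:mec-chara}: $F\cap A=\hat F\cap A$ is a branching on $V$, because $\hat F\cap(A\cup A')$ is a branching on $V\cup V'$ and no arc of $A'$ has its head in $V$. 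For any root $r\in V$ of this branching, the vertex $r$ is covered in $\hat F$ either by some edge of $E\subseteq F\cap E$ or by the edge $rr'\in E'$ (and then $e_r\in F$), so in either case $r\in\covered(F\cap E)$.

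The main obstacle is the first direction, where the star decomposition of $F\cap E$ is used to replace surplus edge-incidences at star centers by $E'$-edges without over-covering any vertex or introducing a cycle in $\hat F$. The second direction is essentially routine once one observes that $A'$-arcs provide ``free'' reachability into $V'$ and cannot participate in any cycle, since each $v'\in V'$ has degree one in $\hat F$.
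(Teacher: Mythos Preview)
Your proof is correct and follows essentially the same approach as the paper: in both directions the constructions coincide (star decomposition of $F\cap E$, keeping one edge per star and replacing the rest by $E'$-edges, then filling $V'$ with $A'$-arcs; and conversely, replacing each $vv'\in\hat F\cap E'$ by a cheapest incident edge and discarding $A'$). Your write-up is in fact more detailed than the paper's, which simply asserts that one ``may assume'' the star/branching structure of a minimum mixed edge cover, whereas you justify it via Proposition~\ref{prop:equivalence} and the observation that arc-heads are uniquely covered; similarly, you explicitly invoke Proposition~\ref{prop:mec-chara} to certify that the set built in the second direction is a mixed edge cover.
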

\begin{proof}
  Let $F$ be a minimum weight mixed edge cover in $G$. We may assume that $E \cap F$ is a disjoint union of stars and $F \cap A$ is a branching whose roots are exactly the endpoints of $E \cap F$. Let $S$  be a star component of $E \cap F$ with center $s$ of degree at least 2. Remove all but one edges of $S$ from $F$, and for every removed edge $sv$, add $vv'$ to $F'$. Do this for every star component of $E \cap F$ with at least 2 edges, and then add arbitrary incoming arcs to the remaining isolated vertices in $V'$. The resulting $F'$ is a perfect matching forest and $c(F')\leq w(F)$.

  Conversely, let $F'$ be a minimum weight perfect matching forest in $H$. For every edge $vv' \in E' \cap F'$, replace $vv'$ by a minimum weight edge in $E$ incident to $v$. Remove all arcs in $A'$. The resulting edge set $F$ is a mixed edge cover in $G$ such that $w(F) \leq c(F')$.
\end{proof}

Combining Lemma~\ref{lem:pfm-mec} with Theorem~\ref{thm:mfalgo} yields the following.
\begin{theorem}
  There is a strongly polynomial-time algorithm to find a minimum weight mixed edge cover.
\end{theorem}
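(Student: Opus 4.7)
The plan is to combine Lemma~\ref{lem:pfm-mec} with Theorem~\ref{thm:mfalgo} via the construction of the auxiliary graph $H$. Given an instance $(G,w)$ with $w:E\cup A\to\R_+$, I would first decide in polynomial time whether $G$ admits a mixed edge cover at all; by Proposition~\ref{prop:mec-chara} this amounts to checking, in the digraph $(V,A)$, whether every vertex is reachable from $\covered(E)$, a standard reachability test. If not, we report that no mixed edge cover exists; otherwise we proceed.

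Next, I would explicitly construct $H=(V\cup V',E\cup A\cup E'\cup A')$ together with its cost function $c$ as described before Lemma~\ref{lem:pfm-mec}. The only bookkeeping issue is the ``infinite'' cost assigned to $vv'\in E'$ when $v$ is incident to no edge of $E$: we replace it by a sufficiently large finite integer, for instance $M:=1+|V|\cdot(1+\max_{e\in E\cup A} w(e))$, so that no minimum-cost perfect matching forest can use such an edge whenever a mixed edge cover of finite weight exists. We then apply Giles's algorithm (Theorem~\ref{thm:mfalgo}) to $H$ with cost function $-c$, obtaining a maximum-weight, i.e.\ minimum-cost, perfect matching forest $F^*$ in $H$ in strongly polynomial time. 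Since $G$ was verified to admit a mixed edge cover, Lemma~\ref{lem:pfm-mec} guarantees $c(F^*)<M$, so no $M$-cost edge appears in $F^*$.

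Finally, I would recover a mixed edge cover in $G$ by applying the back-conversion from the second half of the proof of Lemma~\ref{lem:pfm-mec}: for each $vv'\in E'\cap F^*$, replace $vv'$ by any minimum-weight edge of $E$ incident to $v$, and discard all arcs of $A'$. The resulting set $F\subseteq E\cup A$ is a mixed edge cover satisfying $w(F)\le c(F^*)$, and since Lemma~\ref{lem:pfm-mec} ensures $c(F^*)$ equals the minimum weight of a mixed edge cover in $G$, $F$ is optimal. The construction of $H$, Giles's algorithm, and the back-conversion each run in strongly polynomial time. I do not anticipate a real obstacle here: all of the essential combinatorial work has already been packaged into Lemma~\ref{lem:pfm-mec} and Theorem~\ref{thm:mfalgo}, and the proof is essentially a matter of stitching them together while handling the infinite-weight convention carefully.
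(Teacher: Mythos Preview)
Your proposal is correct and follows essentially the same route as the paper, which states the theorem as an immediate consequence of combining Lemma~\ref{lem:pfm-mec} with Theorem~\ref{thm:mfalgo}. The additional care you take (the preliminary reachability test and the replacement of the infinite cost by a finite $M$) is sound and simply makes explicit details that the paper leaves implicit.
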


Using the same auxiliary graph $H$ and Corollary~\ref{cor:pmftdi}, we can obtain the following TDI description of mixed edge covers.
The proof is provided in Section~\ref{sec:TDI}.
\begin{theorem}\label{thm:mectdi}
  The following is a TDI description of mixed edge covers:
  \begin{align*}
  1\geq x_e &\geq 0\ \ \text{for every $e\in E\cup A$}\\
  x(E[\cup \cS ])+x(\delta_E(\cup \cS))+\sum_{Z \in \cS} x(\delta^{in}_A(Z)) &\geq \lceil |\cS|/2\rceil\ \ \text{for every subpartition $\cS$ of $V$}.
  \end{align*}
\end{theorem}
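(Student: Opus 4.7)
The plan is to derive the theorem from Corollary~\ref{cor:pmftdi} applied to the auxiliary graph $H=(V\cup V',\ E\cup A\cup E'\cup A')$ of Lemma~\ref{lem:pfm-mec}: given an integer cost $c:E\cup A\to\Z$, the approach is to lift $c$ to an integer cost $c_H$ on $H$, invoke TDI of the perfect matching forest LP on $H$ to obtain an integer optimal dual, and then project it to an integer optimal dual of the mixed edge cover LP on $G$.

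As a preliminary reduction I would handle negative entries of $c$ separately: whenever $c_e<0$, every optimal $x$ to the MEC LP has $x_e=1$, so we may set the upper-bound dual variable $\bar u_e:=-c_e\in\Z_{\ge 0}$, replace $c_e$ by $0$, and work on the reduced nonnegative-cost problem. With $c\ge 0$ in hand, define
\[
c_H(e):=c(e)\ \text{for }e\in E\cup A,\qquad c_H(vv'):=\min_{uv\in E}c(uv)\ \text{for }vv'\in E',\qquad c_H(uv'):=0\ \text{for }uv'\in A',
\]
setting $c_H(vv'):=+\infty$ when $v$ has no incident $E$-edge. By (the proof of) Lemma~\ref{lem:pfm-mec}, this correspondence preserves primal optima; by LP duality it then suffices to translate the PMF dual on $H$ to an MEC dual on $G$ while preserving integrality.

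By Corollary~\ref{cor:pmftdi} applied to $H$ with cost $c_H$, there exist integer optimal dual variables $\pi^*\in\Z^{V\cup V'}$ (for the head-equality constraints) and $y^*\ge 0$ laminar-supported on odd subpartitions of $V\cup V'$. Since every arc $uv'\in A'$ has $c_H(uv')=0$, a normalization argument exploiting the abundance of $A'$-arcs lets us assume $\pi^*_{v'}=0$ for every $v'\in V'$, without losing integrality or the laminar structure. I would then build the MEC dual $(\bar y,\bar u)$ by projection: for each subpartition $\cS$ with $y^*_\cS>0$, define its projection $\cS|_V:=\{Z\cap V:Z\in\cS,\ Z\cap V\ne\emptyset\}$ and add $y^*_\cS$ to $\bar y_{\cS|_V}$; for each $v\in V$ add $\max(0,\pi^*_v)$ to $\bar y_{\{\{v\}\}}$; and for each $e\in E\cup A$ choose $\bar u_e\ge 0$ to absorb any negative $\pi^*_v$ contribution at its endpoints, using the dual feasibility bound $\pi^*_v\le c_H(vv')=\min_{uv\in E}c(uv)$ to guarantee nonnegativity.

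The hard part will be verifying that the projected dual $(\bar y,\bar u)$ is feasible, integer-valued, and attains the MEC primal optimum. Two subtleties arise: projecting a laminar family of subpartitions of $V\cup V'$ down to $V$ may collapse classes lying entirely in $V'$, so one must check that $\lceil|\cS|/2\rceil$ changes in a way compatible with the dual objective and that the projected family is again laminar with odd classes; and the unrestricted vertex potentials $\pi^*_v$ must be decomposed into a positive part (absorbed into $\bar y_{\{\{v\}\}}$) and a negative part (absorbed via $\bar u_e$ on incident edges), while keeping all coordinates nonnegative integer. Both obstacles should yield to the laminar odd-subpartition structure provided by Corollary~\ref{cor:pmftdi} together with the normalization $\pi^*_{v'}=0$, which jointly ensure the translated dual is feasible and matches the primal optimum.
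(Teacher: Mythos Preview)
Your overall strategy matches the paper's: reduce negative costs to nonnegative via the upper-bound duals, pass to the auxiliary graph $H$, invoke the TDI result for perfect matching forests (Corollary~\ref{cor:pmftdi}), and translate the dual back to $G$. The gap is in the translation step.

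You normalize only $\pi^*_{v'}=0$ for $v'\in V'$ and then attempt a nontrivial projection: restrict each active subpartition to $V$, fold positive $\pi^*_v$ into $\bar y_{\{\{v\}\}}$, and absorb negative $\pi^*_v$ into the upper-bound duals $\bar u_e$. Each of these moves is problematic. First, even achieving $\pi^*_{v'}=0$ is not a pure ``normalization'': the arc constraint $-\pi_{v'}+\sum\{y_{\cS}:uv'\in\delta^{in}(Z)\text{ for some }Z\in\cS\}\le 0$ can force $\pi_{v'}>0$ when $v'$ lies in a class of some positive subpartition, so you must simultaneously modify $y$, and you have not said how. Second, your sign conventions are off: in the PMF dual the contribution of $\pi_v$ to the edge constraint is $-\pi_v$ and to the objective is $-\pi_v$, whereas $\bar y_{\{\{v\}\}}$ contributes $+\bar y_{\{\{v\}\}}$ to both; so it is the \emph{negative} part of $\pi^*_v$ that would map to $\bar y_{\{\{v\}\}}$, not the positive part, and there is no evident place to park a positive $\pi^*_v$. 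Third, when a class of $\cS$ lies entirely in $V'$, dropping it changes $\lceil|\cS|/2\rceil$, and you give no mechanism to compensate for the lost objective value while maintaining dual feasibility.

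The paper avoids all of this by proving the stronger normalization (Lemma~\ref{lem:pizero}): among integral optimal duals with laminar support, one can choose $(\pi,y)$ with $\pi\equiv 0$ \emph{and} every positive subpartition disjoint from $V'$. This is established by a minimality argument on $\sum_u\pi_u$ followed by a case analysis that modifies $y$ along with $\pi$ (decreasing $y_{\cS}$ and increasing $y_{\cS'}$ for a subpartition $\cS'$ obtained by deleting one or two classes of $\cS$). Once Lemma~\ref{lem:pizero} is in hand, the ``projection'' is trivial: $y$ is already a feasible integer dual for the MEC system on $G$ with the correct objective value. Your outline would become a proof if you replaced the sketchy projection by this lemma; as written, the projection step does not go through.
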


\subsection{Proof of TDIness of the Mixed Edge Cover System}\label{sec:TDI}
Let $G=(V, E \cup A)$ be a mixed graph with edge weights $w:E \cup A \to \Z_+$. We assume that $G$ has a mixed edge cover. We construct the auxiliary mixed graph $H=(V \cup V',E\cup A \cup E' \cup A')$ and cost function $c$ as in Section \ref{sec:mic-property}.
Consider the dual of the linear program \eqref{eq:pmftdi1}--\eqref{eq:pmftdi3} for the auxiliary graph $H$ and the cost function $c$:
\begin{align}
  \max \sum \{\lceil |\cS|/2\rceil y_{\cS} : \cS \text{ is a }  &\text{subpartition of  $V \cup V'$}\} - \sum_{v \in V \cup V'} \pi_v& \notag\\
  -\pi_u-\pi_v+\sum \{y_{\cS}: \{u,v\} \cap \cup \cS \neq \emptyset \}  &\leq c(uv)\ \ \text{for every $uv \in E \cup E'$}& \label{eq:edge}\\
  -\pi_v+ \sum \{y_{\cS}: uv \in \delta^{in}(Z) \text{ for some } Z \in \cS\}  &\leq c(uv)\ \ \text{for every $uv \in A \cup A'$}&\label{eq:arc}\\
  \pi_v  &\geq 0\ \ \text{for every $v \in V \cup V'$}&\\
  y_{\cS}  &\geq 0\ \ \text{for every subpartition $\cS$ of $V \cup V'$}.&
  \end{align}

By Corollary \ref{cor:pmftdi}, there is an integral optimal dual solution $(\pi,y)$ such that the support of $y$ is laminar and consists of odd subpartitions.

\begin{lemma}\label{lem:pizero}
The dual linear program for $(H,c)$ has an integral optimal solution $(\pi,y)$ such that the support of $y$ is laminar, it consists of subpartitions disjoint from $V'$, and $\pi \equiv 0$.
\end{lemma}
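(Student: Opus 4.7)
The plan is to take an integer optimal dual solution $(\pi,y)$ for the $(H,c)$ dual LP with laminar $y$-support consisting of odd subpartitions, as guaranteed by Corollary~\ref{cor:pmftdi}, and to transform it into one satisfying the three stated conditions by two coordinated modifications. The guiding idea is that the cost-$0$ arcs in $A'$ make the $V'$ side of $H$ ``redundant'': any involvement of $V'$-vertices in the dual can be absorbed into $V$ without loss of objective.

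For the first modification I would project every subpartition $\cS$ in the support onto $V$: set $\cS_{/V} := \{Z \cap V : Z \in \cS,\, Z \cap V \neq \emptyset\}$, and aggregate $y$-values by $y'_{\cS'} := \sum\{y_{\cS} : \cS_{/V} = \cS'\}$. The collection $\{\cS_{/V}\}$ is still laminar because intersection with a fixed set preserves containment and disjointness between classes, so each of the three laminarity conditions is inherited. For dual constraints on $uv \in E$ and $uv \in A$ (where both endpoints lie in $V$) the coefficients of $y$ are unchanged, since $\{u,v\}\cap(Z\cap V)=\{u,v\}\cap Z$ and the condition $v\in Z,\,u\notin Z$ is equivalent to $v\in Z\cap V,\,u\notin Z\cap V$. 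For constraints on $vv'\in E'$ and $uv'\in A'$ the coefficients can only decrease, so these constraints do not become tighter. The only cost of this step is a decrease in the objective by $\sum_{\cS}\bigl(\lceil |\cS|/2\rceil - \lceil |\cS_{/V}|/2\rceil\bigr)\,y_{\cS}$, which precisely counts classes of $\cS$ that were contained in $V'$.

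For the second modification I would set $\pi \equiv 0$ and claim the gain $\sum_v \pi_v$ exactly compensates the loss above. To justify this, I would use the constraints for arcs $uv'\in A'$, which read $\pi_{v'}\geq \sum\{y_\cS: \exists Z\in\cS,\,v'\in Z,\,u\notin Z\}$ for every $u\in V$. A class $Z\subseteq V'$ in $\cS$ always contributes $y_\cS$ to this lower bound (since $u\notin Z$ for every $u\in V$), while a class $Z$ with mixed $V$/$V'$ content contributes $y_\cS$ as soon as one chooses $u\in V\setminus Z$. By exploiting the laminarity of the support and choosing $u$ inside the smallest $Z$-class containing $v'$ (when it exists in $V$) or outside the family (when it does not), one obtains a batch of inequalities whose sum over $v'\in V'$ yields exactly the loss from projecting. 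Combined with complementary slackness applied to a primal integer optimum that covers $V'$ using only $A'$-arcs (constructed from a minimum weight mixed edge cover $F$ in $G$ via Lemma~\ref{lem:pfm-mec}), this fixes $\pi \equiv 0$ on $V$ as well, because tight constraints on $e\in F\cap(E\cup A)$ become $\pi_u+\pi_v=0$ or $\pi_v=0$ with $\pi\geq 0$.

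The main obstacle will be the bookkeeping in the second step: one has to argue that the lower bounds on $\pi_{v'}$ coming from the $A'$-constraints can be summed over $v'$ to produce exactly the objective loss from the projection, and not merely an inequality in the wrong direction. I expect this to require an induction along the laminar tree of the $y$-support, processing innermost $V'$-intersecting classes first so that the corresponding $\pi_{v'}$ drops cleanly, while maintaining both the integrality of $y$ and the nonnegativity of $\pi$ throughout the modifications.
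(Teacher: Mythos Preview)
Your projection step is sound: restricting every class to $V$ preserves laminarity, leaves the constraints for $E\cup A$ unchanged, and only loosens those for $E'\cup A'$. The problem is the second step. Setting $\pi\equiv 0$ can destroy feasibility for edges in $E$ (and likewise in $E'$): the constraint for $uv\in E$ is
\[
-\pi_u-\pi_v+\sum\{y_\cS:\{u,v\}\cap\cup\cS\neq\emptyset\}\le c(uv),
\]
and after projection this sum is \emph{unchanged}, so dropping $\pi_u,\pi_v$ increases the left side by $\pi_u+\pi_v$. Nothing in your argument rules this out. Your appeal to complementary slackness does not help here: tightness of the constraint for some $uv\in F\cap E$ gives
$-\pi_u-\pi_v+\sum\{y_\cS:\{u,v\}\cap\cup\cS\neq\emptyset\}=c(uv)$,
which does \emph{not} reduce to $\pi_u+\pi_v=0$ unless you already know the $y$-sum equals $c(uv)$. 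So the step ``tight constraints on $e\in F\cap(E\cup A)$ become $\pi_u+\pi_v=0$'' is simply false as stated. There is also no guarantee of a primal optimum covering all of $V'$ by $A'$-arcs alone: the construction in Lemma~\ref{lem:pfm-mec} does use edges $vv'\in E'$, and replacing such an edge by an arc into $v'$ leaves $v$ uncovered.

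The paper's proof circumvents these feasibility issues entirely. It never artificially sets $\pi$ to zero; instead it picks, among integral optimal duals with laminar support, one minimizing $\sum_u\pi_u$, and shows by a short case analysis that any $\pi_{v'}>0$ (respectively $\pi_u>0$) would allow a local modification---decreasing some $y_\cS$, increasing $y_{\cS'}$ for a subpartition $\cS'$ obtained by deleting one or two classes, and decreasing the offending $\pi$---that keeps optimality but lowers $\sum_u\pi_u$. The argument for $u\in V$ hinges on the key inequality $c(vv')\le c(uv)$ for every $uv\in E$: if $\pi_u>0$ forced some edge $uv$ to be tight, comparing with the constraint for $vv'$ would force $\pi_{v'}>0$ as well, contradicting the first claim. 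Your proposal does not use this cost relation at all, and without it there is no mechanism to control $\pi$ on $V$.
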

\begin{proof}
Consider an integral optimal dual solution $(\pi,y)$ where the support of $y$ is laminar and the value $\sum_{u\in V \cup V'} \pi(u)$ is minimal. Let us call a subpartition $\cS$ \emph{positive} if $y_{\cS}>0$. Since the support of $y$ is laminar, each $u\in V \cup V'$ is either uncovered by positive subpartitions, or there is a minimal positive subpartition $\cS$ such that $u \in \cup \cS$. In the latter case, $\cS$ is called the minimal positive subpartition covering $u$. An edge $uv \in E \cup E'$ is called \emph{tight} if \eqref{eq:edge} for $uv$ is satisfied with equality.

\begin{claim}\label{cl:tdi1}
$\pi_{v'}=0$ for every $v' \in V'$.
\end{claim}
\noindent {\em Proof.} Suppose for contradiction that $\pi_{v'}>0$, and consider the following cases.
\begin{itemize}
\item If neither $v$ nor $v'$ is covered by a positive subpartition, then we can decrease $\pi_{v'}$ by 1.
\item Suppose that $v'$ is not covered by a positive subpartition, and the minimal positive subpartition covering $v$ is $\cS$. Let $Z$ be the class of $\cS$ containing $v$, and let $\cS'$ be the subpartition obtained from $\cS$ by removing the class $Z$. We decrease $y_{\cS}$ and $\pi_{v'}$ by 1, and increase $y_{\cS'}$ by 1. This is still a feasible dual solution, because $\eqref{eq:edge}$ still holds for $vv'$, and \eqref{eq:arc} holds for any arc $uv'$ since $v'$ is not covered by a positive subpartition. The objective value does not decrease but $\sum_{u \in V \cup V'} \pi_u$ decreases.

\item Let $\cS$ be the minimal positive subpartition covering $v'$, and let $Z$ be the class of $\cS$ containing $v'$. Suppose that $v \notin \cup \cS$ or $v \in Z$. Let $\cS'$ be the subpartition obtained from $\cS$ by removing the class $Z$. We can decrease $y_{\cS}$ and $\pi_{v'}$ by 1, and increase $y_{\cS'}$ by 1 as in the previous case.

\item Let $\cS$ be the minimal positive subpartition covering $v'$, let $Z$ be the class of $\cS$ containing $v'$, and let $Y$ be the class containing $v$. Let $\cS'$ be the subpartition obtained from $\cS$ by removing the classes $Y$ and $Z$. We get a feasible dual solution by decreasing $y_{\cS}$ and $\pi_{v'}$ by 1, and increasing $y_{\cS'}$ by 1. The objective value remains the same.
\end{itemize}
In all cases, we obtained an optimal dual solution where $\sum_{u \in V \cup V'} \pi_u$ is smaller, contradicting the choice of $(y,\pi)$. \hfill $\diamond$

\begin{claim}\label{cl:tdi2}
$\pi_u=0$ for every $u \in V$.
\end{claim}
\noindent {\em Proof.} First, we consider the case when no positive subpartition covers $u$. Since $\pi_{v'}=0$ for every $v' \in V'$ by the previous Claim,
\eqref{eq:arc} for the arcs $uv'$ implies that positive subpartitions are disjoint from $V'$. If there is no tight edge $uv \in E$, then we can just decrease $\pi_u$ by 1. Suppose that there is a tight edge $uv \in E$, i.e.\
$ -\pi_u-\pi_v+\sum \{y_{\cS}: v \in \cup \cS \}  = c(uv)$.
Since $c(vv') \leq c(uv)$, $\eqref{eq:edge}$ for $vv'$ implies that
$-\pi_{v'}-\pi_v+\sum \{y_{\cS}: v \in \cup \cS \}  \leq c(uv)$.
Thus $\pi_u>0$ implies $\pi_{v'}>0$, contradicting the previous Claim.

Let now $\cS$ be the minimal positive subpartition covering $u$, and let $Z$ be the class of $\cS$ containing $u$. If $u' \in \cup \cS$, then $u' \in Z$, otherwise \eqref{eq:arc} would be violated for the arc $uu'$. Let $\cS'$ be the subpartition obtained from $\cS$ by removing the class $Z$. If there is no tight edge $uv \in E$ with $v \in \cup\cS\setminus Z$, then we can decrease $y_{\cS}$ and $\pi_{u}$ by 1, and increase $y_{\cS'}$ by 1.

Suppose that there is a tight edge $uv \in E$ with $v \in \cup \cS\setminus Z$. Every positive subpartition covering $u$ also covers $v$, so tightness implies
$-\pi_u-\pi_v+\sum \{y_{\cS}: v \in \cup \cS \}  = c(uv)$.
Since $c(vv') \leq c(uv)$, $\eqref{eq:edge}$ for $vv'$ implies
$-\pi_{v'}-\pi_v+\sum \{y_{\cS}: v \in \cup \cS \}  \leq c(uv)$.
Thus $\pi_u>0$ implies $\pi_{v'}>0$, contradicting the previous Claim. \hfill $\diamond$
\medskip

The two Claims
together show that $\pi \equiv 0$, as required. To show that positive subpartitions can be assumed to be disjoint from $V'$, observe that if $v' \in V'$ is covered by a positive subpartition, then the class containing $v'$ must be a superset of $V$, otherwise \eqref{eq:arc} is violated for some arc $uv'$. We can replace this class by $V$ and still get a feasible dual solution.
\end{proof}
\begin{proof}[Proof of Theorem~\ref{thm:mectdi}]
Let $\rho_w(G)$ denote the minimum weight of a mixed edge cover in $G$ for weight function $w$.
First, we prove dual integrality for nonnegative integer weights.
Given a mixed edge cover problem instance $G=(V, E \cup A)$ with edge weights $w:E \cup A \to \Z_+$, we construct the auxiliary mixed graph $H$ and cost function $c$ as above. By Lemma \ref{lem:pfm-mec}, $\rho_w(G)$ equals the minimum cost of a perfect matching forest in $H$.  By Lemma \ref{lem:pizero}, the latter problem has an integer optimal dual solution $(y,\pi)$ where $\pi \equiv 0$ and every positive subpartition is disjoint from $V'$. Since $y$ is a feasible dual solution to the mixed edge cover system for $G$ and its objective value equals $\rho_w(G)$, it is an optimal dual solution.

Consider now the case when $w$ has some negative values. Write $w$ as $w=w^+-w^-$, where $w^+$ is the positive part of $w$ and $w^-$ is the negative part. Clearly, $\rho_{w^+}(G)-\rho_w(G)=w^-(E \cup A)$. Let $y$ be the optimal integer dual solution for $w^+$, obtained as above. For $e \in E \cup A$, let $z_e$ denote the dual variable corresponding to the condition $x_e\leq 1$. If we set $z:=w^-$, then $(y,z)$ is a feasible integer dual solution for $w$ and its objective value equals $\rho_{w^+}(G)+w^-(E \cup A)=\rho_w(G)$, so it is an optimal dual solution.
\end{proof}

\section{Equitable Partitions into Matching Forests}\label{sec:equitable-mf}
In this section, we consider equalization of matching forests.
We provide specific construction methods for the partitions required in Theorems~\ref{thm:mf1} and \ref{thm:mf2}.

Our construction is based on repeated application of operations that equalize a pair of matching forests.
Recall that a matching forest consists of a branching $B$ and a matching $M$ such that $\covered(M)\subseteq R(B)$ (see Proposition~\ref{prop:mf-chara}).
For equalization of edge-size, we want to perform exchanges along alternating paths on edges,
but at the same time we have to modify the arc parts so that the resulting root sets $R'$ and edge sets $M'$ satisfy $\covered(M')\subseteq R(B')$ again.
To cope with this issue, we invoke the following result of Schrijver on root exchange of branchings.

\begin{lemma}[Schrijver \cite{Schr00}]\label{lem:Schrijver}
Let $B_1$ and $B_2$ be branchings and let $R(B_1)$ and $R(B_2)$ denote their root sets.
Let $R'_1$ and $R'_2$ be vertex sets satisfying
$R'_1\cup R'_2=R(B_1)\cup R(B_2)$ and $R'_1\cap R'_2=R(B_1)\cap R(B_2)$.
Then, $B_1\cup B_2$ can be re-partitioned into branchings $B'_1$ and $B'_2$
with $R(B'_1)=R'_1$ and $R(B'_2)=R'_2$ if and only if
each strong component without entering arc (i.e., each source component of $B_1\cup B_2$) intersects both $R'_1$ and $R'_2$.
\end{lemma}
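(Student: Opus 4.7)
The plan is to establish both directions of Lemma~\ref{lem:Schrijver}.

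Necessity is immediate from the definition of branching. Let $C$ be a source strong component of $B_1\cup B_2$, so that no arc of $B_1\cup B_2$ enters $C$ from outside. Any repartition $B'_1,B'_2\subseteq B_1\cup B_2$ inherits this property, and therefore each $B'_i$ restricted to $C$ is a branching on $C$. Since a branching on the nonempty set $C$ has at least one root, and its roots lie in $R(B'_i)\cap C=R'_i\cap C$, we obtain $R'_i\cap C\neq\emptyset$ for both $i$.

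For sufficiency, I would first reformulate the problem. Because $B'_1\cup B'_2=B_1\cup B_2$, the in-degree of each vertex in the union is fixed, and the conditions $R'_1\cup R'_2=R(B_1)\cup R(B_2)$ and $R'_1\cap R'_2=R(B_1)\cap R(B_2)$ prescribe the desired in-degree of each vertex in each $B'_i$. Vertices in $R'_1\cap R'_2$ have in-degree $0$ in $B_1\cup B_2$ (no choice), vertices in the symmetric difference $R'_1\triangle R'_2$ have in-degree $1$ and the unique incoming arc is forced into the side on which the vertex is not a root, while vertices in $V\setminus(R'_1\cup R'_2)$ have in-degree $2$ and exactly one incoming arc must be assigned to each side. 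Hence the only freedom is a binary choice at each vertex of in-degree $2$ in $B_1\cup B_2$, and the only remaining constraint is acyclicity of both $B'_1$ and $B'_2$.

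To resolve the binary choices consistently, I would proceed by induction on the number of strong components of $B_1\cup B_2$, processing source strong components first. Within a source strong component $C$, the subproblem reduces to decomposing $(B_1\cup B_2)\cap A[C]$ into two branchings on $C$ with the prescribed nonempty root sets $R'_1\cap C$ and $R'_2\cap C$; by adding auxiliary super-roots joined to $R'_i\cap C$ and invoking Edmonds' disjoint branchings theorem, one can verify the requisite in-cut conditions from the strong connectivity of $C$ and produce such a decomposition. After fixing the repartition on each source component, I would contract it; the resulting quotient is again a union of two branchings but with strictly fewer strong components, to which the inductive hypothesis applies, and the arcs originally leaving $C$ remain compatible with whichever roots were chosen inside $C$.

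The main obstacle is the globally non-local nature of the acyclicity constraint: an arbitrary resolution of the binary choice at one vertex can create a directed cycle far from that vertex. This is precisely why the source-strong-component hypothesis is both necessary and, once we restrict attention to one source component at a time, sufficient: the Edmonds-type exchange operations (or equivalently matroid intersection between the in-degree partition matroid and the cycle matroid of the underlying undirected graph) coordinate the choices globally, and the contraction step preserves the strong-component hypothesis for the recursion.
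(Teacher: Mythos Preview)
The paper does not prove Lemma~\ref{lem:Schrijver}; it is quoted from Schrijver~\cite{Schr00} and used as a black box, so there is no in-paper proof to compare against. Your outline is a valid route to the result, but two steps are stated too loosely to count as complete.

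First, inside a source strong component $C$, you say the Edmonds cut condition follows ``from the strong connectivity of $C$''. Strong connectivity alone only gives $d^{\mathrm{in}}(X)\geq 1$ for $\emptyset\neq X\subsetneq C$, which handles the case where $X$ misses exactly one of the two target root sets (and here you also use $R'_i\cap C\neq\emptyset$ to rule out $X=C$). When $X$ is disjoint from both $R'_1$ and $R'_2$ you need $d^{\mathrm{in}}(X)\geq 2$, and this requires the hypothesis $R'_1\cup R'_2=R(B_1)\cup R(B_2)$: then $X$ avoids all roots of each original branching $B_i|_C$, so each of $B_1$ and $B_2$ contributes an arc entering $X$. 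You should also note that because the in-degrees in $(B_1\cup B_2)|_C$ match $|\{i:v\notin R'_i\}|$ exactly, the arc-disjoint branchings produced by Edmonds in fact partition the arcs.

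Second, ``the arcs originally leaving $C$ remain compatible with whichever roots were chosen inside $C$'' is where the global acyclicity is hiding. What you need is that $B_i/C$ is again a branching (it is: the contracted vertex $c$ has in-degree~$0$, and a cycle through $c$ is impossible), that the contracted instance satisfies the lemma's hypotheses with root sets $\{c\}\cup(R'_i\setminus C)$, and that after lifting the inductive decomposition and adjoining your within-$C$ decomposition, each side is acyclic. The last point holds because any cycle in the combined $B'_i$ would have to both enter and leave $C$, but no arc of $B_1\cup B_2$ enters a source component. Once these checks are written out, the induction goes through.
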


This lemma will also be used for the equalization of mixed edge covers in Section~\ref{sec:equitable-mec}.

\subsection{Operations for a Pair of Matching Forests}

The following two lemmas are the key to the proof of  Theorems~\ref{thm:mf1} and \ref{thm:mf2}.
As in those theorems, for a matching forest $F'_i\subseteq E\cup A$, we use the notations $M'_i:= F'_i\cap E$ and $B'_i:= F'_i\cap A$.
\begin{lemma}\label{lem:mf1}
Let $G=(V, E \cup A)$ be a mixed graph that is the disjoint union of two matching forests $F_1$, $F_2$.
Then $G$ can be partitioned into two matching forests $F'_1$, $F'_2$ such that
\mbox{$||F'_1|-|F'_2|| \leq 1$} and $||F'_1|-|F'_2||+||M'_1|-|M'_2|| \leq 2$.
\end{lemma}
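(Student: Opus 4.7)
The plan is an extremal argument. Choose a partition $(F'_1, F'_2)$ of $E\cup A$ into two matching forests that lex-minimizes the pair $\bigl(||F'_1|-|F'_2||,\ ||M'_1|-|M'_2||\bigr)$; I will show that this choice already satisfies both inequalities of the lemma, by exhibiting an improving re-partition whenever either inequality fails, contradicting minimality.

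The toolkit has two moves. An \emph{edge exchange} selects a maximal alternating path $P\subseteq M'_1\cup M'_2$ whose two end-edges are both $M'_1$-edges and flips the two colors along $P$; this changes $|M'_1|-|M'_2|$ by $-2$. By Proposition~\ref{prop:mf-chara} the endpoints $u,v$ of $P$ lie in $R(B'_1)$, and the matching-forest property of the new partition forces $u,v$ into the root set of the new $B''_2$. I therefore invoke Schrijver's Lemma (Lemma~\ref{lem:Schrijver}) to re-partition $B'_1\cup B'_2$ into $B''_1, B''_2$ with $R(B''_2)=R(B'_2)\cup\{u,v\}$ and $R(B''_1)=R(B'_1)\setminus(\{u,v\}\setminus R(B'_2))$, which by direct check satisfies the set-equality conditions (i) and (ii) of the lemma. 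A \emph{root exchange} keeps $M'_1,M'_2$ unchanged and uses Schrijver's Lemma alone to transfer a single vertex between $R(B'_1)\setminus R(B'_2)$ and $R(B'_2)\setminus R(B'_1)$, moving one arc between the two branchings. A short count yields the size effects: an edge exchange shifts $|M'_1|-|M'_2|$ by $-2$ and shifts $|F'_1|-|F'_2|$ by $-2$, $0$, or $+2$ according as $0$, $1$, or $2$ of $\{u,v\}$ already belonged to $R(B'_2)$; a root exchange shifts $|F'_1|-|F'_2|$ by $\pm 2$ while leaving $|M'_1|-|M'_2|$ fixed.

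With these tools, the two failure modes to rule out are (A) $||F'_1|-|F'_2||\geq 2$, and (B) $||F'_1|-|F'_2||\leq 1$ but $||F'_1|-|F'_2||+||M'_1|-|M'_2||\geq 3$. In case (A), WLOG $|F'_1|\geq|F'_2|+2$; then either $|M'_1|>|M'_2|$, and an $M'_1$-heavy alternating path $P$ exists whose endpoints can be chosen already inside $R(B'_2)$ (by a counting argument on the endpoints of the alternating-path components of $M'_1\cup M'_2$), so that the edge exchange reduces $||F'_1|-|F'_2||$, or else $|B'_1|>|B'_2|$ and a root exchange does the same. In case (B), WLOG $|M'_1|\geq|M'_2|+2$; an edge exchange along an $M'_1$-heavy alternating path---composed if necessary with a compensating root exchange to cancel any parasitic change in $|F'_1|-|F'_2|$---reduces $||M'_1|-|M'_2||$ by $2$ without increasing $||F'_1|-|F'_2||$.

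The main obstacle is Schrijver's source-strong-component condition: each source strong component of $B'_1\cup B'_2$ must meet both of the newly prescribed root sets. When a flip of an endpoint $u$ of the chosen alternating path would empty a source component of its $R''_1$- or $R''_2$-vertices, the intended exchange is blocked, and I must argue either that a different alternating path (chosen among the several $M'_1$-heavy components, or obtained by rerouting inside the symmetric difference) yields an allowed exchange, or that the blockage itself forces a structural condition on $B'_1\cup B'_2$ that supplies an alternative improving move. The detailed case analysis verifying that such an improving move always exists, together with the bookkeeping that matches the various size changes to exactly the right failure mode, is the technical heart of the proof.
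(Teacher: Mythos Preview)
Your outline has the right ingredients---extremal choice, alternating-path flips on $M'_1\cup M'_2$, and Schrijver's lemma to re-root the branchings---and the paper's proof uses the same basic tools. But there is a genuine gap, and it is precisely the one you flag as ``the main obstacle'' and then defer.

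The source-component condition in Lemma~\ref{lem:Schrijver} is not a technicality to be mopped up by case analysis; it is the crux. A naked root exchange---swapping a single vertex $v$ between $R(B'_1)\setminus R(B'_2)$ and $R(B'_2)\setminus R(B'_1)$---fails outright whenever $v$ is the unique $R(B'_i)$-vertex in its source component of $B'_1\cup B'_2$, and nothing in your setup rules this out. Likewise, your edge exchange moves an endpoint $u$ out of $R(B'_1)$, which can strip a source component of its only $R''_1$-vertex. Your proposed repairs (``a different alternating path'', ``rerouting inside the symmetric difference'', ``the blockage forces a structural condition'') are not arguments; in particular, the claim in case~(A) that an $M'_1$-heavy path can be chosen with both endpoints already in $R(B'_2)$ is exactly the nontrivial statement that needs proof.

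The paper's device that you are missing is an auxiliary graph $G^*$ encoding matchings and root sets simultaneously: for each $v\in R_i\setminus\covered(M_i)$ one adds a dummy node $v^\bullet$ and a dummy edge $v^\bullet v$, so that $M^*_i:=M_i\cup\{v^\bullet v:v\in R_i\setminus\covered(M_i)\}$ is a matching with $|F_i|=|V|-|M^*_i|$. Crucially, \emph{before} decomposing $M^*_1\cup M^*_2$ into alternating paths, one contracts a pair $u\in R_1\setminus R_2$, $v\in R_2\setminus R_1$ from each source component of $B_1\cup B_2$. This contraction forces any root-membership swap at $u$ to be accompanied by the opposite swap at $v$, so Schrijver's condition holds automatically for every exchange along a path in $G^*$. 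The paper then classifies the alternating paths into ten types by their two end-edges, writes $|M_1|-|M_2|$ and $|F_1|-|F_2|$ as signed sums of the type counts, and reads off the required improving path (or pair of paths) by elementary inequalities. Your ``compensating root exchange'' is morally what happens when one exchanges along a path whose end-edge is a dummy edge $v^\bullet v$, but without the contraction there is no guarantee such a move is legal.

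A minor point: your size bookkeeping for the edge exchange has the sign reversed. If \emph{neither} endpoint of $P$ lies in $R(B'_2)$, both are removed from $R(B'_1)$ and added to $R(B''_2)$, so $|R''_1|-|R''_2|$ drops by $4$ and $|F'_1|-|F'_2|$ \emph{increases} by $2$, not decreases. This is exactly why, in case~(A), you cannot simply pick any $M'_1$-heavy path; you need one whose endpoints interact correctly with $R(B'_2)$, and establishing that such a path exists is what the paper's type-counting identities accomplish.
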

\begin{lemma}\label{lem:mf2}
Let $G=(V, E \cup A)$ be a mixed graph that is the disjoint union of two matching forests $F_1$, $F_2$.
Then $G$ can be partitioned into two matching forests $F'_1$, $F'_2$
such that \mbox{$||M'_1|-|M'_2|| \leq 1$} and $||F'_1|-|F'_2||+||M'_1|-|M'_2|| \leq 2$.
\end{lemma}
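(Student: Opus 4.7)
The plan is to prove Lemma~\ref{lem:mf2} by an exchange argument that mirrors the strategy for Lemma~\ref{lem:mf1}, but with the roles of edge-size and total size swapped: we first drive $||M_1|-|M_2||$ down to at most one using alternating-path swaps in $M_1\triangle M_2$, and then settle the combined bound by an arc-only repartition. Throughout, write $B_i := F_i\cap A$ and $R_i := R(B_i)$; by Proposition~\ref{prop:mf-chara}, $F_i$ is a matching forest iff $B_i$ is a branching and $\covered(M_i)\subseteq R_i$. The main tool for rearranging arc parts is Lemma~\ref{lem:Schrijver}, applied to the branching $B_1\cup B_2$ with prescribed target root sets.

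\textbf{Phase 1 (edge equalization).} Suppose $|M_1|-|M_2|\ge 2$. Then $M_1\triangle M_2$ contains a component which is a path $P$ with $|P\cap M_1|=|P\cap M_2|+1$, whose endpoints $u,v$ lie in $\covered(M_1)\setminus\covered(M_2)\subseteq R_1$. Replace $M_i$ by $M'_i:=M_i\triangle P$; this decreases $|M_1|-|M_2|$ by $2$ and forces $\covered(M'_2)=\covered(M_2)\cup\{u,v\}$. To restore the matching-forest property we need root sets $R'_i$ with $\covered(M'_i)\subseteq R'_i$, i.e.\ with $u,v\in R'_2$. Choose $R'_2 := R_2\cup\{u,v\}$ and let $R'_1$ be the unique set satisfying $R'_1\cup R'_2=R_1\cup R_2$ and $R'_1\cap R'_2=R_1\cap R_2$; invoking Lemma~\ref{lem:Schrijver} on $B_1\cup B_2$ with these targets produces branchings $B'_1,B'_2$, and $(M'_i,B'_i)$ is the desired pair of matching forests. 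Iterating until $||M_1|-|M_2||\le 1$ completes this phase.

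\textbf{Phase 2 (total-size repair).} With $M_1,M_2$ now fixed and $||M_1|-|M_2||\le 1$, we re-apply Lemma~\ref{lem:Schrijver} to $B_1\cup B_2$ with root-set targets that respect $\covered(M_i)\subseteq R'_i$ but are chosen to balance $|B'_i|=|V|-|R'_i|$. Flipping a free vertex (one in $R_1\triangle R_2$ but outside $\covered(M_1)\cup\covered(M_2)$) changes $|B'_1|-|B'_2|$ by $\pm 2$, so the attainable values form an arithmetic progression of step $2$. A short case analysis according to $|M_1|-|M_2|\in\{-1,0,1\}$ then shows that we can always land in the interval that ensures $||F'_1|-|F'_2||+||M'_1|-|M'_2||\le 2$: when $|M_1|=|M_2|$ we need $||B'_1|-|B'_2||\le 2$, and when $|M'_1|-|M'_2|=\pm 1$ we need $|B'_1|-|B'_2|$ in the three-element interval dictated by its sign.

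\textbf{Main obstacle.} The crucial step is verifying the hypothesis of Lemma~\ref{lem:Schrijver} at each application, namely that every source strong component $S$ of $B_1\cup B_2$ meets both $R'_1$ and $R'_2$. The dangerous configuration in Phase~1 is a source component $S$ whose only $B_1$-roots lie in $\{u,v\}$, so that moving $u,v$ out of $R_1$ empties $S\cap R'_1$. The natural remedy is to compensate by moving some $w\in S\cap(R_2\setminus R_1)$ from $R_2$-only to $R_1$-only, which is permissible as long as $w\notin\covered(M_2)$. The subtle corner case is when every $B_2$-root of $S$ already lies in $\covered(M_2)$: here one must either reroute $P$ to avoid $S$, or perform an internal exchange inside $S$ between an arc of $B_2$ and an incident edge of $M_2$ to liberate a $B_2$-root. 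Handling this sub-case cleanly, and likewise verifying Schrijver's condition in Phase~2 whenever the desired free-vertex flip would strand a source component, is the principal technical hurdle of the proof.
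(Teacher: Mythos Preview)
Your outline correctly isolates Lemma~\ref{lem:Schrijver} as the engine and its source-component hypothesis as the obstacle, but the argument is left incomplete: you explicitly defer the ``principal technical hurdle'' and neither suggested remedy closes it. In Phase~1, when a source component $S$ of $B_1\cup B_2$ satisfies $S\cap R_1\subseteq\{u,v\}$ and $S\cap(R_2\setminus R_1)\subseteq\covered(M_2)$, ``rerouting $P$'' may simply be impossible (nothing prevents every $M_1$-surplus component of $M_1\triangle M_2$ from having an endpoint trapped in such an $S$), and the ``internal exchange between an arc of $B_2$ and an incident edge of $M_2$'' is not well specified and would alter $|M_2|$, breaking the Phase-1 invariant you rely on. Phase~2 has the same unfinished status: you assert that the attainable values of $|B'_1|-|B'_2|$ form an arithmetic progression of step~$2$, but Schrijver's hypothesis couples the free vertices within each source component, so the feasible set need not be the full interval, and the promised ``short case analysis'' is never supplied. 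Nor is there any argument that enough free vertices exist to reach the target window at all once the forced roots $c_1,c_2$ are accounted for.

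The paper sidesteps all of this by refusing to decouple edge-size from total size. It builds an auxiliary graph $G^*$ in which each uncovered root $v\in R_i\setminus\covered(M_i)$ receives a dummy pendant edge $v^\bullet v\in M_i^\bullet$, so that $M_i^*:=M_i\cup M_i^\bullet$ is a matching with $|F_i|=|V|-|M_i^*|$; a single alternating-path swap in $M_1^*\cup M_2^*$ then moves both $|M_1|-|M_2|$ and $|F_1|-|F_2|$ by amounts read off from the path's end-edge types. Crucially, \emph{before} decomposing into paths, one pair $u\in R_1\setminus R_2$, $v\in R_2\setminus R_1$ is contracted for each source component of $B_1\cup B_2$; this forces every such component to retain a vertex in each new root set after \emph{any} swap, so Schrijver's hypothesis holds automatically and no case analysis is needed. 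The dummy edges are exactly what let the paper equalize $|F_i|$ without leaving the alternating-path framework---precisely the coupling your two-phase scheme gives up.
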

In the following, we give a combined proof of the two lemmas.

\begin{proof}
To construct the required matching forests, we introduce four equalizing operations.
\begin{claim}
It is possible to implement the following four operations on disjoint matching forests
$F_1, F_2$, that repartition $F_1\cup F_2$ into matching forests $F'_1, F'_2$ with the properties below.
\begin{enumerate}
\setlength{\itemindent}{0mm}
\setlength{\leftskip}{15mm}
\setlength{\itemsep}{2.5mm}
\item[Operation 1.] If $|M_1|-|M_2| > 0$ and $|F_1|-|F_2|\geq 0$, it returns  $F'_1, F'_2$ such that
$|M'_1|-|M'_2|-(|M_1|-|M_2|)=-2$ and $|F'_1|-|F'_2|-(|F_1|-|F_2|)\in\{0,-2\}$.

\item[Operation 2.]  If $|M_1|-|M_2| > 0$ and $|F_1|-|F_2|\leq 0$, it returns  $F'_1, F'_2$ such that
$|M'_1|-|M'_2|-(|M_1|-|M_2|)=-2$ and  $|F'_1|-|F'_2|-(|F_1|-|F_2|)\in\{0,2\}$.

\item[Operation 3.] If $|F_1|-|F_2| > 0$ and $|M_1|-|M_2|\geq 0$, it returns  $F'_1, F'_2$ such that
$|F'_1|-|F'_2|-(|F_1|-|F_2|)=-2$ and $|M'_1|-|M'_2|-(|M_1|-|M_2|)\in\{0,-2\}$.

\item[Operation 4.] If $|F_1|-|F_2| > 0$ and $|M_1|-|M_2|\leq 0$, it returns  $F'_1, F'_2$ such that
$|F'_1|-|F'_2|-(|F_1|-|F_2|)=-2$ and $|M'_1|-|M'_2|-(|M_1|-|M_2|)\in\{0,2\}$.

\end{enumerate}
\end{claim}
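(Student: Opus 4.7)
The plan is to construct each of the four operations by combining two atomic moves.  Move (A) is an edge-alternating flip: take an $M_1$-heavy path $P$ in $M_1 \cup M_2$ (i.e.\ a path with $|P\cap M_1|=|P\cap M_2|+1$ and both endpoints lying in $\partial(M_1)$), replace $M_i$ by $M_i\triangle P$, and invoke Schrijver's Lemma (Lemma~\ref{lem:Schrijver}) to re-partition the arcs so that the two endpoints of $P$, now covered by the new $M_2$-matching, become roots of the new $B_2$-branching. Move (B) is a single arc swap: by a one-root application of Schrijver's Lemma, reassign one arc between $B_1$ and $B_2$.  Move (A) changes $|M_1|-|M_2|$ by $-2$, and changes $|F_1|-|F_2|$ by $-2+2k$, where $k\in\{0,1,2\}$ counts the endpoints of $P$ that have in-degree $1$ in $A$ (these are precisely the endpoints whose unique incoming arc must be transferred from $B_2$ to $B'_1$ in the root repair). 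Move (B) changes $|F_1|-|F_2|$ by $\pm2$ and leaves $|M_1|-|M_2|$ unchanged.

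For Operations 1 and 2, the hypothesis $|M_1|-|M_2|>0$ guarantees the existence of an $M_1$-heavy path $P$. Executing move (A) delivers the required $-2$ change in $|M_1|-|M_2|$, and the outcome $k\in\{0,1\}$ matches the Operation 1 range, while $k\in\{1,2\}$ matches the Operation 2 range. When the natural $k$ falls outside the prescribed range --- for instance, $k=2$ under the hypothesis of Operation 1 --- the plan is to append an instance of move (B) that subtracts $2$ from $|F_1|-|F_2|$, recovering the allowed outcome; the case $k=0$ under Operation 2 is handled symmetrically. Operations 3 and 4 require a $-2$ change in $|F_1|-|F_2|$, with $|M_1|-|M_2|$ permitted to stay or shift by $\pm 2$. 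A short sign analysis on $|M_1|-|M_2|$ and $|B_1|-|B_2|$ under the constraint $|F_1|-|F_2|>0$ shows that one of the following is always available: move (A) engineered to have $k=0$ (yielding the $-2$ outcome in $|M_1|-|M_2|$), a solitary move (B) (yielding the $0$ outcome), or, for Operation 4, a mirror of (A) applied to an $M_2$-heavy path possibly combined with a compensating move (B).

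The delicate step, and the one I expect to be the main obstacle, is verifying the source-strong-component condition of Schrijver's Lemma at every invocation.  Since any branching is acyclic, its restriction to a source strong component of $B_1\cup B_2$ is still acyclic and hence contains at least one root in that component, so the condition is automatic \emph{before} any move.  A single root swap can violate the condition only if the swapped vertex is the unique root of its source strong component in one of the branchings; when such a degenerate configuration arises we plan to evade it either by selecting a different $M_1$-heavy path in move (A) or by choosing the arc involved in move (B) inside the problematic component.  Formalising this case analysis, and carefully bookkeeping the effect of each move on $|F_i|$ and $|M_i|$, completes the proof of the claim.
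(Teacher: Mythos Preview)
Your skeleton is right---alternating-path flips on the edge parts together with root reassignment via Lemma~\ref{lem:Schrijver} is exactly how the claim is proved---but two steps you flag as routine are in fact where all the work lies, and as written they are genuine gaps.

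\medskip
\textbf{The source-component condition.} Your proposed fix (``select a different $M_1$-heavy path'' or ``choose the arc inside the problematic component'') is a hope, not an argument: nothing guarantees that an alternative path exists avoiding the same obstruction, and your move~(B) gives no control over which component is touched. The paper handles this not by case analysis but by building the alternating structure in an auxiliary graph $G^*$ in which, for every source component $S$ of $B_1\cup B_2$ that contains a pair $u\in R_1\setminus R_2$, $v\in R_2\setminus R_1$, one such pair is \emph{contracted} to a single vertex. Any alternating-path exchange in $G^*$ then swaps the roles of $u$ and $v$ simultaneously, so $S$ automatically meets both $R'_1$ and $R'_2$; components with no such pair already meet $R_1\cap R_2=R'_1\cap R'_2$. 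This contraction is the device you are missing.

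\medskip
\textbf{The two atomic moves are not enough.} Your move~(A) lives in $M_1\cup M_2$ and your move~(B) touches a single vertex of $R_i\setminus\partial(M_i)$, but the required exchanges include \emph{mixed} moves where one end of the walk is a matching edge and the other end is an uncovered root. The paper captures these by enlarging $M_i$ to $M^*_i=M_i\cup M^{\bullet}_i$, adding a pendant edge $v^{\bullet}v$ for each $v\in R_i\setminus\partial(M_i)$, and then classifying the alternating paths of $M^*_1\cup M^*_2$ into ten types according to which of $M_1,M^{\bullet}_1,M_2,M^{\bullet}_2$ the two end-edges lie in. An identity expresses $|M_1|-|M_2|$ and $|F_1|-|F_2|$ as signed sums of the path-type counts $p(1),\dots,p(10)$, and under each operation's hypothesis one reads off either a single path or a \emph{pair} of paths whose combined exchange yields exactly the required changes. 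Your ``short sign analysis'' for Operations~3 and~4 cannot be completed with only (A) and (B): for instance, when $|M_1|=|M_2|$ and $|F_1|>|F_2|$ there may be no $M_1$-heavy path in $M_1\cup M_2$ at all and no swappable vertex in $(R_2\setminus\partial(M_2))\setminus R_1$, yet the counting argument still produces a type-5 path or a type-6/type-8 pair in $G^*$. The compensating move~(B) you propose for the case $k=2$ in Operation~1 has the same issue: its availability, and its compatibility with both $\partial(M'_i)\subseteq R'_i$ and the source-component condition after move~(A) has already been performed, are never established.
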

We postpone the proof of this claim and complete the proof of the lemmas relying on it.
Note that we also have Operations 1',2',3',4' by switching the roles of $F_1$ and $F_2$.
To prove Lemma~\ref{lem:mf1}, we repeat updating $F_1, F_2$ in the following manner:
\begin{itemize}
\item If $||M_1|-|M_2|| > 2$,
apply Operation 1, 1', 2, or 2' depending on the signs of $|M_1|-|M_2|$ and $|F_1|-|F_2|$, and update $F_1, F_2$ with $F'_1$, $F'_2$.
\smallskip
\item If $||M_1|-|M_2|| \leq 2$ and $||F_1|-|F_2||>1$,
apply Operation 3, 3', 4,  or 4' depending on the signs of $|M_1|-|M_2|$ and $|F_1|-|F_2|$, and update $F_1, F_2$.
\end{itemize}
Note that $||M_1|-|M_2||$ decreases when Operation 1, 1', 2, or 2' is applied.
Also, when Operation 3, 3', 4, or 4' is applied, $||F_1|-|F_2||$ decreases while $||M_1|-|M_2||\leq 2$ is preserved.
Therefore,  we finally obtain $||M_1|-|M_2||\leq 2$ and $||F_1|-|F_2||\leq 1$.
Then $||F_1|-|F_2||+||M_1|-|M_2|| \leq 2$ if $||M_1|-|M_2||<2$ or $||F_1|-|F_2||<1$,
while otherwise we can apply Operation 1, 1', 2, or 2' to update $F_1$ and $F_2$ so that $||M_1|-|M_2||=0$ and $||F_1|-|F_2||=1$.
Thus, we have $||F_1|-|F_2||+||M_1|-|M_2|| \leq 2$, and Lemma~\ref{lem:mf1} is proved.
Lemma~\ref{lem:mf2} can be shown similarly by swapping the roles of $M_i$ and $F_i$
and of Operations 1--2 and 3--4.
\end{proof}

Here we prove the postponed claim.

\paragraph{Proof of the Claim.} Let $R_i:=R(B_i)$ for each $i=1,2$. Note that
\[|F_i|= |M_i|+|B_i|=|M_i|+|V|-|R_i|=|V|-|M_i|-|R_i\setminus\covered(M_i)|.\]

We construct an auxiliary undirected graph $G^*=(V^*, E^*)$. For every node $v \in (R_1\setminus\covered(M_1)) \cup (R_2\setminus\covered(M_2))$, we add a new node $v^{\bullet}$.
The edge set $E^*$ consists of two disjoint matchings $M^*_1$ and $M^*_2$,
where
\begin{align*}
M^*_i&=M_i\cup M^{\bullet}_i,\\
M^{\bullet}_i&=\set{v^{\bullet}v | v\in R_i\setminus\covered(M_i)}.
\end{align*}
By definition, $M^*_i$ is a matching, and $|F_i|=|V|-|M^*_i|$. If a node $v \in V$ is covered by both $M^*_1$ and $M^*_2$, then $v \notin \covered(B_1 \cup B_2)$, so $v$ is a singleton source component in $B_1 \cup B_2$.

For each source component $S$ of $B_1 \cup B_2$ in the original graph,
if there are $u,v\in S$ such that $u\in R_1\setminus R_2$ and $v\in R_2\setminus R_1$,
take such a pair $(u,v)$ and contract $u$ and $v$ in $G^*$. Let $V^*$ be the resulting node set.
After this operation, $M^*_1$ and $M^*_2$ are still matchings, and hence
$E^*=M^*_1\cup M^*_2$ can be partitioned into alternating cycles and paths. Note that a node $v^{\bullet}$ is either the end-node of a path, or it is in the alternating 2-cycle $v^{\bullet}v$ (the latter occurs when $v \in (R_1\setminus\covered(M_1)) \cap (R_2\setminus\covered(M_2))$). This means that edges in $M^{\bullet}_i$ appear only at the end of paths and in the above-mentioned 2-cycles.
(See an example in Fig.~\ref{fig4}.)
\begin{figure}[htb]
\begin{center}
   \includegraphics[width=95mm]{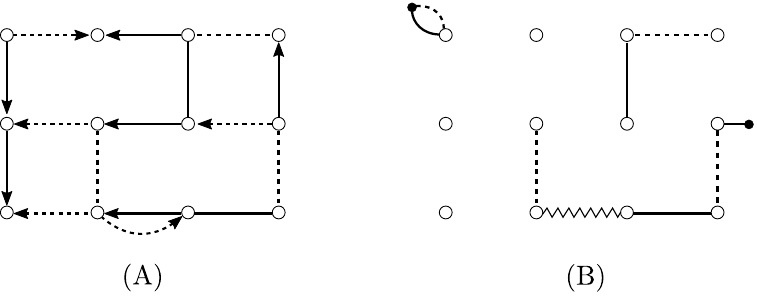}
\caption{
(A) A graph $G=(V,F_1\cup F_2)$. Thick and dashed lines represent $F_1$ and $F_2$, respectively.
(B)~The auxiliary graph $G^*=(V^*, M^*_1\cup M^*_2)$.  Vertices of types $v$ and $v^{\bullet}$ are represented by white and black circles, respectively. The zigzag line means a contraction. The edge set is partitioned into a 2-cycle and two paths.}
\label{fig4}
\end{center}
\end{figure}

Depending on the types of the first and last edges, there are ten types of paths shown in Table~\ref{table:types-mf}.
For each path $P$, let $m(P):=|P\cap M_1|-|P\cap M_2|$
and $f(P):=|P\cap M^*_2|-|P\cap M^*_1|$. We see that these values depend only on the type of $P$.
\begin{table}[hbtp]
  \caption{Types of Alternating Paths}
  \label{table:types-mf}
  \centering
  \begin{tabular}{|c|c|c|c|}
    \hline
    type &  end-edges  & $m(P)$ & $f(P)$ \\
    \hline
	1 & $M^{\bullet}_1, M^{\bullet}_1$ & -1 & -1 \\
	2 & $M_1, M^{\bullet}_1$ & 0 & -1 \\
	3 & $M_1, M_1$ & 1 & -1 \\
	\hline
	4 & $M^{\bullet}_2, M^{\bullet}_2$ & 1 & 1 \\
	5 & $M_2, M^{\bullet}_2$ & 0 & 1 \\
	6 & $M_2, M_2$ & -1 & 1 \\
	\hline
	7 & $M^{\bullet}_1, M^{\bullet}_2$ & 0 & 0 \\
	8 & $M_1, M^{\bullet}_2$ & 1 & 0 \\
	9 & $M^{\bullet}_1, M_2$ & -1 & 0 \\
	10 & $M_1, M_2$ & 0 & 0 \\
    \hline
  \end{tabular}
\end{table}
Now we show the following statement.
\begin{description}
\item[($\star$)\!] If $P_1,\dots,P_k$ is a set of alternating paths in $G^*$,
then we can partition $F_1\cup F_2$ into two matching forests $F'_1$ and $F'_2$ so that
$|M'_1|-|M'_2|=|M_1|-|M_2|-2\sum_{j=1}^k m(P_j)$ and $|F'_1|-|F'_2|=|F_1|-|F_2|-2\sum_{j=1}^k f(P_j)$.
\end{description}
To obtain $F'_1$ and $F'_2$, we first define edge sets $M'_1$, $M'_2$ and root sets $R'_1$, $R'_2$, whose validity we will show.
Let $P=P_1 \cup \dots \cup P_k$, and $P'=P \cap (M_1\cup M_2)$.
For each $i=1,2$, define $M'_i:=M_i\Delta P'$.
Then
\begin{equation}
|M'_1|-|M'_2|=|M_1|-|M_2|-2(|P\cap M_1|-|P\cap M_2|)=|M_1|-|M_2|-2\sum_{j=1}^{k} m(P_j).\label{eq:mf_val_M}
\end{equation}
Let $Q'_i=M^{\bullet}_i \Delta P$, and let
\[R'_i:=\covered(M'_i) \cup \set{v\in V| v^{\bullet}v\in Q'_i} \text{ for each $i=1,2$}.\]
Note that $R_1\cap R_2=R'_1\cap R'_2$ and $R_1\cup R_2=R'_1\cup R'_2$.
Moreover, each strong component $S$ of $B_1\cup B_2$ intersects with $R'_1$ and $R'_2$ as we have contracted $u$ and $v$ in $G^*$
for a pair $u,v\in S$ with $u\in R_1\setminus R_2$ and $v\in R_2\setminus R_1$.
Therefore, by Lemma~\ref{lem:Schrijver}, we can partition $B_1\cup B_2$ into branchings $B'_1$ and $B'_2$ such that $R(B'_1)=R'_1$ and $R(B'_2)=R'_2$.
Define $F'_1:=M'_1\cup B'_1$ and $F'_2:=M'_2\cup B'_2$.
By Proposition~\ref{prop:mf-chara}, $F'_i$ is a matching forest.
Also, as $|F_i|=|M_i|+|V|-|R_i|$, the definition of $f(P)$ implies $|F'_1|-|F'_2|-(|F_1|-|F_2|)=|M'_1|-|M'_2|-(|M_1|-|M_2|)+|R'_2|-|R'_1|-(|R_2|-|R_1|)=-2\sum_{j=1}^{k} f(P_j)$.
Together with \eqref{eq:mf_val_M},  the proof of ($\star$) is completed.
\medskip

By ($\star$), for the implementation of the operations,
it suffices to show the existence of paths with suitable $m(P)$ and $f(P)$ values.
Recall that $M^*_1\cup M^*_2$ is partitioned into alternating paths and cycles;
let $\mathcal{P}$ and $\mathcal{C}$ be those collections of paths and cycles.
Then $|M_1|-|M_2|$ is the sum of the two values
$\sum_{P\in \mathcal{P}} m(P)$ and
$\sum_{C\in\mathcal{C}} m(C)$, where the latter is $0$ as each cycle has even length.
Thus, $|M_1|-|M_2|=\sum_{P\in \mathcal{P}} m(P)$. Similarly, we obtain $|F_1|-|F_2|=|M^*_2|-|M^*_1|=\sum_{P\in \mathcal{P}} f(P)$.
Because each type defines the values of $m(P)$ and $f(P)$ as in Table~\ref{table:types-mf},
we have the following equations,
where we denote by $p(t)$ the number of paths of type $t\in\{1,2,\dots, 10\}$:
\begin{align}
|M_1|-|M_2|&=p(3)+p(4)+p(8)-p(1)-p(6)-p(9)\label{eq:edge-num_mf}\\
|F_1|-|F_2|&=p(4)+p(5)+p(6)-p(1)-p(2)-p(3).\label{eq:total-num_mf}
\end{align}
Now we implement Operations 1--4 in the claim.

\begin{itemize}
\setlength{\leftskip}{-3mm}
\item Operation 1. We have $|M_1|-|M_2|>0$ and $|F_1|-|F_2|\geq 0$.
Since \eqref{eq:edge-num_mf} is positive, at least one of $p(3), p(4), p(8)$ is positive.
If $p(4)>0$ or $p(8)>0$, then there is a path of type 4 or 8. By exchange along such a path, we obtain $F'_1$ and $F'_2$ with the desired properties.
In the remaining case, $p(4)=p(8)=0$. The positivity of \eqref{eq:edge-num_mf} implies $p(3)-p(6)>0$, and hence $p(4)+p(6)-p(3)<0$.
As \eqref{eq:total-num_mf} is nonnegative, we have $p(5)>0$.
Exchange along a pair of paths of types 3 and 5 yields the desired $F'_1$, $F'_2$ by ($\star$).
\medskip

\item Operation 2. We have $|M_1|-|M_2|>0$ and $|F_1|-|F_2|\leq 0$.
Since \eqref{eq:edge-num_mf} is positive, at least one of $p(3), p(4), p(8)$ is positive.
If $p(3)>0$ or $p(8)>0$, then there is a path of type 3 or 8. By exchange along such a path, we obtain $F'_1$ and $F'_2$ with the desired properties.
In the remaining case, $p(3)=p(8)=0$. Then the positivity of \eqref{eq:edge-num_mf} implies $p(4)-p(1)>0$, and hence $p(4)-p(1)-p(3)>0$.
As \eqref{eq:total-num_mf} is nonpositive, we have $p(2)>0$.
Exchange along a pair of paths of types 2 and 4 yields the desired $F'_1$, $F'_2$ by ($\star$).
\medskip

\item Operation 3. We have $|M_1|-|M_2|\geq 0$ and $|F_1|-|F_2|> 0$.
Since \eqref{eq:total-num_mf} is positive, at least one of $p(4), p(5), p(6)$ is positive.
If $p(4)>0$ or $p(5)>0$, then there is a path of type 4 or 5. By exchange along such a path, we obtain $F'_1$ and $F'_2$ with the desired properties.
In the remaining case, $p(4)=p(5)=0$. Then the positivity of \eqref{eq:total-num2} implies $p(6)-p(3)>0$, and hence $p(3)+p(4)-p(6)<0$.
As \eqref{eq:edge-num_mf} is nonnegative, we have $p(8)>0$.
Exchange along a pair of paths of types 6 and 8 yields the desired $F'_1$, $F'_2$ by ($\star$).
\medskip

\item Operation 4. We have $|M_1|-|M_2|\leq 0$ and $|F_1|-|F_2|> 0$.
Since \eqref{eq:total-num_mf} is positive, at least one of $p(4), p(5), p(6)$ is positive.
If $p(5)>0$ or $p(6)>0$, then there is a path of type 5 or 6. By exchange along such a path, we obtain $F'_1$ and $F'_2$ with the desired properties.
In the remaining case, $p(5)=p(6)=0$. Then the positivity of \eqref{eq:total-num_mf} implies $p(4)-p(1)>0$, and hence $p(4)-p(1)-p(6)>0$.
As \eqref{eq:edge-num_mf} is nonpositive, we have $p(9)>0$.
Exchange along a pair of paths of types 4 and 9 yields the desired $F'_1$, $F'_2$ by ($\star$).
\end{itemize}
Thus, Operations 1--4 are implemented.
\qed

\subsection{Proofs of Theorems~\ref{thm:mf1} and \ref{thm:mf2}}

We prove Theorem~\ref{thm:mf1} using Lemma~\ref{lem:mf1} (the proof of Theorem~\ref{thm:mf2} using Lemma~\ref{lem:mf2} is analogous).
We start with an arbitrary partitioning of $G$ into $k$ matching forests $F_1,\dots,F_k$. We describe a 2-phase algorithm to obtain the required partitioning.

In the first phase, in every step we choose $i$ and $j$ with $|F_i|-|F_j|$ maximal, and use Lemma~\ref{lem:mf1} to replace them by matching forests $F_i'$ and $F_j'$ such that
$-1 \leq |F_i'|-|F_j'| \leq 1$. We repeat this until there is a number $q$ such that $|F_i| \in \{q,q+1\}$ for every $i$.
In each step, at least one of the following is true:
\begin{itemize}
\item $\min_{i \in [k]} |F_i|$ increases while $\max_{i \in [k]} |F_i|$ does not increase,
\item $\max_{i \in [k]} |F_i|$ decreases while $\min_{i \in [k]} |F_i|$ does not decrease,
\item the number of indices $i$ such that $|F_i|$ is minimal or maximal decreases.
\end{itemize}
This shows that the number of steps is polynomial.

In the second phase, we distinguish two cases. Suppose first that each $F_i$ has size $q$. In every step, we choose $i$ and $j$ with $|M_i|-|M_j|$ maximal, and use Lemma~\ref{lem:mf1} to replace $F_i$ and $F_j$ by matching forests $F_i'$ and $F_j'$ such that $|F_i'|=|F_j'|=q$ and $-2 \leq |M'_i|-|M'_j| \leq 2$.
We repeat this until $|M_i|-|M_j| \leq 2$ for every $i,j$. Since each $F_i$ still has the same size, the obtained matching forests also satisfy $|B_i|-|B_j|\leq 2$ for every $i,j$.

Now suppose that not every $F_i$ has the same size. In each step we choose $i$ and $j$ such that $|F_i|=q$, $|F_j|=q+1$, and $||M_i|-|M_j||$ is maximal among these. By Lemma~\ref{lem:mf1}, we can replace $F_i$ and $F_j$ by matching forests $F_i'$ and $F_j'$ such that $||F_i'|-|F_j'||=1$ and $-1 \leq |M'_i|-|M'_j| \leq 1$. We repeat this until $|M_i|-|M_j| \leq 1$ whenever $|F_i| \neq |F_j|$. This also implies that $|M_i|-|M_j| \leq 2$ when $|F_i| = |F_j|$. We can conclude that $|B_i|-|B_j|\leq 2$ for every $i,j$.

The number of steps in the second phase can be bounded similarly as in the first phase. One of the following happens in each step:
\begin{itemize}
\item $\min_{i \in [k]} |M_i|$ increases while $\max_{i \in [k]} |M_i|$ does not increase,
\item $\max_{i \in [k]} |M_i|$ decreases while $\min_{i \in [k]} |M_i|$ does not decrease,
\item the number of indices $i$ such that $|M_i|$ is minimal or maximal decreases.
\end{itemize}
Therefore, the number of steps in the second phase is also polynomial.

\section{Equitable Partitions into Mixed Edge Covers}\label{sec:equitable-mec}
In this section, we show how to obtain the mixed edge covers
required in Theorems~\ref{thm:mec1} and \ref{thm:mec2}.
Similarly to the case of matching forests, we repeat equalization of a pair of mixed edge covers.

Recall that a mixed edge cover is characterized by containing a branching $B$ and an edge set $N$ with $R(B)\subseteq \covered(N)$ (see Proposition~\ref{prop:mec-chara}).
To keep the edge parts and the arc parts compatible throughout the construction,
we again utilize Lemma~\ref{lem:Schrijver} of Schrijver.

\subsection{Operations for a Pair of Mixed Edge Covers}
To obtain Theorems~\ref{thm:mec1} and \ref{thm:mec2}, we use the following two lemmas.
As before, for a mixed edge cover $F'_i\subseteq E\cup A$, we write $N'_i:= F'_i\cap E$ and $B'_i:= F'_i\cap A$.
\begin{lemma}\label{lem:mec1}
Let $G=(V, E \cup A)$ be a mixed graph that can be partitioned into two mixed edge covers $F_1$, $F_2$.
Then $G$ contains two disjoint mixed edge covers $F'_1$, $F'_2$ such that
$||F'_1|-|F'_2|| \leq 1$ and $||F'_1|-|F'_2||+||N'_1|-|N'_2|| \leq 2$.
\end{lemma}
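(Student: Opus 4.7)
The plan is to follow the proof strategy of Lemma~\ref{lem:mf1}: I establish four ``equalizing operations'' on pairs of mixed edge covers, one for each sign combination of $|N_1|-|N_2|$ and $|F_1|-|F_2|$, and then invoke the same iterative procedure used to derive Theorem~\ref{thm:mf1} from Lemma~\ref{lem:mf1}. Each operation, given a pair $(F_1,F_2)$ whose imbalances have the prescribed signs, should repartition $F_1\cup F_2$ into mixed edge covers $(F_1',F_2')$ with one of the two imbalances decreased by exactly $2$ and the other shifted by at most $2$; once these are in hand, Lemma~\ref{lem:mec1} follows by exactly the repetition argument in Section~\ref{sec:equitable-mf}.

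To implement each operation, I first invoke Proposition~\ref{prop:mec-chara} to choose, for each $i$, a branching $B_i^{\text{br}}\subseteq F_i\cap A$ with $R(B_i^{\text{br}})\subseteq \covered(N_i)$, and set aside the remaining arcs $Z_i:=(F_i\cap A)\setminus B_i^{\text{br}}$ as redundant. Then I construct an auxiliary undirected graph $G^*=(V^*,N_1^*\cup N_2^*)$ along the same lines as in the proof of Lemma~\ref{lem:mf1}. Two modifications are needed to cope with the differences between matching forests and mixed edge covers: (i) because the edge parts $N_i$ need not be matchings, each vertex incident to several edges of some $N_i$ is split into parallel copies so that each $N_i^*$ becomes a matching in $G^*$; (ii) dummy vertices $v^{\bullet}$ are attached to vertices in the slack sets $\covered(N_i)\setminus R(B_i^{\text{br}})$ (the opposite slack from the matching-forest case), so that the dummy edges $vv^{\bullet}$ provide the flexibility to ``release'' an edge from $N_i$ without endangering root-coverage.

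With $G^*$ in hand, $N_1^*\cup N_2^*$ decomposes into alternating cycles and paths, and each path belongs to one of ten types determined by its two end-edges, exactly as in Table~\ref{table:types-mf}. A direct bookkeeping shows that $|N_1|-|N_2|$ and $|F_1|-|F_2|$ are both signed sums of path-type counts, so each of the four operations is realized by exchanging along a single path of an appropriate type or a carefully chosen pair of paths. After the edge exchange, the new root sets $R_1',R_2'$ satisfy $R_1'\cup R_2'=R(B_1^{\text{br}})\cup R(B_2^{\text{br}})$ and $R_1'\cap R_2'=R(B_1^{\text{br}})\cap R(B_2^{\text{br}})$, so Lemma~\ref{lem:Schrijver} repartitions $B_1^{\text{br}}\cup B_2^{\text{br}}$ into branchings with these prescribed root sets; the redundant arcs in $Z_1\cup Z_2$ are then reassigned so as to preserve the mixed edge cover property (placing each $z$ in whichever new cover contains the branching arc at its head suffices).

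The main obstacle I foresee is the non-matching nature of the edge parts $N_i$: a single vertex can be incident to many edges of $N_i$, so a naive exchange could remove the unique edge covering some root of the new branching, breaking the mixed edge cover property. The vertex-splitting and dummy-vertex tricks above are designed to prevent this, but one must verify carefully that after each exchange every vertex of the updated root set $R_i'$ remains covered by the updated edge set $N_i'$. A secondary subtlety, as in the matching forest case, is arranging the source-component hypothesis of Lemma~\ref{lem:Schrijver}; this is handled by contracting in $G^*$ a pair of vertices from each source component of $B_1^{\text{br}}\cup B_2^{\text{br}}$ that straddles the two root sets before performing the alternating-path decomposition.
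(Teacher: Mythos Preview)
Your high-level strategy---four equalizing operations realized via an auxiliary graph, alternating paths, and Schrijver's root-exchange lemma---is exactly the paper's. The genuine gap is that you omit the step the paper's argument hinges on: before building $G^*$, replace each $F_i$ by a \emph{minimal} mixed edge cover. After minimization, $B_i:=F_i\cap A$ is itself a branching with $R(B_i)=\covered(N_i)$ (no slack, no redundant arcs $Z_i$), and $N_i$ is a disjoint union of stars. The paper then constructs $G^*$ not with slack-set dummies but via a choice function $\pi_i$: each root $v\in R_i$ picks one incident edge $\pi_i(v)\in N_i$, star centers are split into one copy per non-chosen incident edge, and the edges of $N_i$ are sorted into $N_i^{\circ}$ (chosen by both endpoints) and $N_i^{\bullet}$ (chosen by only one). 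This yields the clean identity $|F_i|=|V|-|N_i^{\circ}|$, from which both $|N_1|-|N_2|$ and $|F_1|-|F_2|$ are honestly expressible as signed sums of path-type counts.

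Your alternative---keeping redundant arcs $Z_i$ and attaching dummy edges at $\covered(N_i)\setminus R(B_i^{\text{br}})$---does not produce such an identity: $|F_1|-|F_2|$ then carries the uncontrolled terms $|Z_1|-|Z_2|$ and $|\covered(N_1)|-|\covered(N_2)|$, so the assertion that ``direct bookkeeping shows $|F_1|-|F_2|$ is a signed sum of path-type counts'' is unsupported. Two further points: the ten path types are \emph{not} ``exactly as in Table~\ref{table:types-mf}''---the covering setting yields a different table (the paper's Table~\ref{table:types-mec}) with essentially reversed signs; and reassigning the $Z_i$ after the swap is both unnecessary (supersets of mixed edge covers remain mixed edge covers) and harmful (it destroys the size control the operations require). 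The fix is to adopt the minimization step; in the paper's iteration it appears as a repeated update ``if $F_i$ is not minimal, shrink it,'' and termination is argued via a lexicographic potential on $(|F_1\cup F_2|,\max\{||N_1|-|N_2||,2\},||F_1|-|F_2||)$.
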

\begin{lemma}\label{lem:mec2}
Let $G=(V, E \cup A)$ be a mixed graph that can be partitioned into two mixed edge covers $F_1$, $F_2$.
Then $G$ contains two disjoint mixed edge covers $F'_1$, $F'_2$
such that $||N'_1|-|N'_2|| \leq 1$ and $||F'_1|-|F'_2||+||N'_1|-|N'_2|| \leq 2$.
\end{lemma}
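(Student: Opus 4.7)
The plan is to prove Lemma~\ref{lem:mec2} in parallel with Lemma~\ref{lem:mec1}, reusing the same collection of pair-equalizing operations but applying them in the opposite order: first drive $||N_1|-|N_2||$ down, and only then balance $|F_1|-|F_2|$. This is exactly the relationship between Lemmas~\ref{lem:mf1} and~\ref{lem:mf2} in the matching-forest case. Concretely, I would establish four operations that, given a pair $(F_1,F_2)$ of mixed edge covers, produce a new pair $(F'_1,F'_2)$ with prescribed changes in $(|N_1|-|N_2|,\,|F_1|-|F_2|)$, and then apply them via a two-phase scheme analogous to the one that derives Lemma~\ref{lem:mf2}.

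The construction of the four operations mirrors the auxiliary-graph technique used for matching forests. For each vertex $v\in V$ that is a root of a branching inside $F_i\cap A$ but is not yet covered by $F_i\cap E$, I would add an auxiliary vertex $v^{\bullet}$ together with an auxiliary edge $v^{\bullet}v$; this, together with (a sub-forest of) $N_i$, forms an auxiliary edge set $N_i^{\star}$. Since the lemma requires only two \emph{disjoint} mixed edge covers (not a partition of $G$), we are free to discard the surplus elements of $F_i$ that are not needed for the covering property. I would therefore first replace each $F_i$ by a minimal mixed edge cover, which by Proposition~\ref{prop:equivalence} is acyclic and whose edge part is a disjoint union of stars centered at vertices in $R(B_i)$; extracting a matching $\tilde N_i\subseteq N_i$ covering these roots makes $\tilde N_1^{\star}\cup\tilde N_2^{\star}$ a union of two matchings whose components decompose into alternating paths, classified by the types of their end-edges exactly as in Table~\ref{table:types-mf}.

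The arc side is then handled via Lemma~\ref{lem:Schrijver}. As in the matching-forest construction, any source component of $B_1\cup B_2$ that meets both $R(B_1)\setminus R(B_2)$ and $R(B_2)\setminus R(B_1)$ is contracted in the auxiliary graph so that the compatibility hypothesis of Lemma~\ref{lem:Schrijver} is satisfied; the covered sets $\covered(N'_i)$ dictated by an exchange along a chosen path (or pair of paths) then serve as valid root sets $R'_1,R'_2$, and $B_1\cup B_2$ can be repartitioned into branchings with these roots. By Proposition~\ref{prop:mec-chara} this yields the required mixed edge covers $F'_1,F'_2$, whose size changes are read off from the $(m(P),f(P))$ entries of the path types involved.

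The main obstacle is that the edge parts of mixed edge covers are not matchings a priori; a matching sub-cover $\tilde N_i$ must be extracted, and different choices yield different auxiliary graphs, so the extraction may need to be redone after each operation. A secondary point to verify is that the second-phase operations---those reducing $||F_1|-|F_2||$---can be carried out without increasing $||N_1|-|N_2||$ beyond the threshold of $1$ permitted by Lemma~\ref{lem:mec2}; this is the analog of Operations~3 and~4 in the matching-forest framework, and should follow by the same enumeration of path types once the auxiliary construction is set up. With these in place, the proof concludes by iterating the operations as in the proof of Lemma~\ref{lem:mf2}, prioritizing $|N_i|$-equalization.
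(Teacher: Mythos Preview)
Your high-level plan is right: the paper does prove Lemmas~\ref{lem:mec1} and~\ref{lem:mec2} together via four sign-dependent equalizing operations, applies them iteratively with priorities swapped, and uses Lemma~\ref{lem:Schrijver} to repartition the arc side after each edge exchange. The reduction to minimal mixed edge covers is also exactly what the paper does.

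The gap is in the auxiliary construction. You propose to port the matching-forest auxiliary graph directly: add bullet-vertices for roots not covered by $F_i\cap E$, then extract a matching $\tilde N_i\subseteq N_i$ covering the roots, and classify paths ``exactly as in Table~\ref{table:types-mf}.'' But for a \emph{minimal} mixed edge cover one has $R_i=\covered(N_i)$, so \emph{every} root is already covered by $N_i$ and your bullet-vertex rule produces nothing. More seriously, a matching $\tilde N_i\subseteq N_i$ covering all of $R_i$ need not exist: a star in $N_i$ with $d\geq 2$ leaves has $d+1$ vertices, all of which are roots, and no matching inside that star covers more than two of them. Even if you patch this by attaching bullet-edges to the leftover leaves, the discarded star-edges of $N_i$ are real edges that must end up in some $F'_j$ and be counted in $|N'_j|$; your auxiliary graph no longer tracks them, so the bookkeeping for $|N'_1|-|N'_2|$ breaks down after an exchange.

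The paper's fix is a genuinely different auxiliary graph, tailored to the covering side. Instead of adding dummy leaves, it \emph{splits} each star center $v$ into one copy $v^i_e$ per non-chosen incident edge $e\in N_i$, via a choice function $\pi_i:R_i\to N_i$. Edges of $N_i$ chosen by both endpoints form $N^{\circ}_i$, those chosen by only one endpoint form $N^{\bullet}_i$; one gets $|N_i|=|N^{\circ}_i|+|N^{\bullet}_i|$ and $|F_i|=|V|-|N^{\circ}_i|$, and the ten path types (Table~\ref{table:types-mec}, not Table~\ref{table:types-mf}) are indexed by the $\circ/\bullet$ labels of the end-edges. The key exchange statement~(b) and the implementation of Operations~1--4 then go through as in the matching-forest case, but with these different invariants. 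Your outline would be correct once this vertex-splitting construction replaces the matching-extraction step.
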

In the following, we give a combined proof of the two lemmas.
\begin{proof}
To construct the required mixed edge covers, we introduce four equalizing operations.

\begin{claim}
It is possible to implement the following four operations, each of which is applied to minimal mixed edge covers
$F_1, F_2$ and repartition $F_1\cup F_2$ into (not necessarily minimal) mixed edge covers $F'_1, F'_2$ with the properties below.
\begin{enumerate}
\setlength{\itemindent}{0mm}
\setlength{\leftskip}{15mm}
\setlength{\itemsep}{2.5mm}
\item[Operation 1.] If $|N_1|-|N_2|>0$ and $|F_1|-|F_2|\geq 0$, it returns  $F'_1, F'_2$ such that
$|N'_1|-|N'_2|-(|N_1|-|N_2|)=-2$ and $|F'_1|-|F'_2|-(|F_1|-|F_2|)\in\{0,-2\}$.

\item[Operation 2.] If $|N_1|-|N_2|>0$ and $|F_1|-|F_2|\leq 0$, it returns  $F'_1, F'_2$ such that
$|N'_1|-|N'_2|-(|N_1|-|N_2|)=-2$ and $|F'_1|-|F'_2|-(|F_1|-|F_2|)\in\{0,2\}$.

\item[Operation 3.] If $|F_1|-|F_2|>0$ and $|N_1|-|N_2|\geq 0$, it returns  $F'_1, F'_2$ such that
$|F'_1|-|F'_2|-(|F_1|-|F_2|)=-2$ and $|N'_1|-|N'_2|-(|N_1|-|N_2|)\in\{0,-2\}$.

\item[Operation 4.] If $|F_1|-|F_2|>0$ and $|N_1|-|N_2|\leq 0$, it returns  $F'_1, F'_2$ such that
$|F'_1|-|F'_2|-(|F_1|-|F_2|)=-2$ and $|N'_1|-|N'_2|-(|N_1|-|N_2|)\in\{0,2\}$.

\end{enumerate}
\end{claim}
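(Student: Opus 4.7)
The plan is to mirror the matching forest argument of Lemmas~\ref{lem:mf1} and \ref{lem:mf2}: build an auxiliary undirected graph in which the star-forest edge parts $N_1,N_2$ become matchings, decompose their union into alternating paths, and then use Schrijver's root-exchange lemma (Lemma~\ref{lem:Schrijver}) to re-assign the arcs consistently with each edge swap.

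Since $F_1,F_2$ are \emph{minimal} mixed edge covers, Proposition~\ref{prop:equivalence} ensures that each $B_i:=F_i\cap A$ is a branching, that $N_i:=F_i\cap E$ is a vertex-disjoint union of stars, and that $R(B_i)=\covered(N_i)$. For each star of $N_i$ with centre $c$ and leaves $l_1,\dots,l_d$, I would designate $l_1$ as the \emph{primary} leaf, keep $cl_1$ as a primary edge, and for $j\ge 2$ introduce a fresh vertex $l_j^{\bullet}$ and a \emph{secondary} edge $l_jl_j^{\bullet}$ that encodes the original edge $cl_j$. Collecting these edges gives a matching $N_i^{\ast}$ on $V\cup V^{\bullet}$, and a vertex $v\in V$ is covered by $N_i^{\ast}$ iff $v\in\covered(N_i)=R(B_i)$. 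To prepare for the root exchange, inside every source strong component of $B_1\cup B_2$ that contains both a vertex of $R(B_1)\setminus R(B_2)$ and one of $R(B_2)\setminus R(B_1)$, I would pick one such pair $(u,v)$ and contract $u,v$ in the resulting auxiliary graph $G^{\ast}$; this preserves the matching structure of $N_1^{\ast},N_2^{\ast}$.

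The key re-partition lemma, playing the role of $(\star)$ from the matching forest proof, would read: given any collection of alternating paths $P_1,\dots,P_k$ in $N_1^{\ast}\cup N_2^{\ast}$, swap the $N_1^{\ast}/N_2^{\ast}$ labels along $\bigcup_jP_j$ and let $N_i'$ be the corresponding re-partition of the original edges $N_1\cup N_2$. Since $\covered(N_1'\cup N_2')=\covered(N_1\cup N_2)$, and since every vertex of $\covered(N_1)\cap\covered(N_2)$ has degree~$2$ in $N_1^{\ast}\cup N_2^{\ast}$ and is therefore internal to the path decomposition, one can choose $R_i'\subseteq\covered(N_i')$ with $R_1'\cup R_2'=R(B_1)\cup R(B_2)$ and $R_1'\cap R_2'=R(B_1)\cap R(B_2)$. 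The contraction step guarantees that every source component of $B_1\cup B_2$ still meets both $R_1'$ and $R_2'$, so Lemma~\ref{lem:Schrijver} supplies branchings $B_i'$ with $R(B_i')=R_i'$ and $B_1'\cup B_2'=B_1\cup B_2$, and $F_i':=N_i'\cup B_i'$ are the desired mixed edge covers. The rest then parallels the matching forest case: end-edges of paths split into four basic kinds (primary or secondary, in $N_1^{\ast}$ or $N_2^{\ast}$), giving a short list of path types with fixed values $n(P):=|P\cap N_1^{\ast}|-|P\cap N_2^{\ast}|$ and an $f(P)$ that also tracks the change in $|R_2|-|R_1|$; summing over the decomposition recovers $|N_1|-|N_2|$ and $|F_1|-|F_2|=(|N_1|-|N_2|)+(|R_2|-|R_1|)$ as linear combinations of path-type counts. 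Under the sign hypotheses of each of Operations 1--4, the same extraction of a single path or a pair of paths as in the matching-forest Operations~1--4 produces an exchange realising the prescribed change.

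The main obstacle will be verifying the re-partition lemma at path endpoints that are centres of stars of size $d\ge 2$: swapping the primary edge $cl_1$ does not uncover $c$ in $N_i'$, because the remaining star edges $cl_2,\dots,cl_d$ still cover $c$, so after the exchange $c$ acquires coverage in \emph{both} $N_1'$ and $N_2'$ even though $c$ originally lay in only one of $R(B_1),R(B_2)$. The flexibility in the choice of $R_i'$ (which only has to be contained in $\covered(N_i')$, not equal to it) is what absorbs this asymmetry: such a centre is simply kept on its original side when defining $R_i'$. A careful case analysis of all analogous endpoint configurations -- centre, primary leaf, non-primary leaf, and fake vertex in $V^{\bullet}$ -- is what makes the auxiliary-graph predictions for $n(P)$ and $f(P)$ consistent with the true changes in $|N_1|-|N_2|$ and $|F_1|-|F_2|$; once this bookkeeping is in place, the four operations follow.
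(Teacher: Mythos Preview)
Your plan is essentially the paper's proof. Your auxiliary graph (one primary edge per star through the centre, plus secondary edges attached to dummy vertices) is isomorphic to the paper's construction, in which each root $v\in R_i$ ``chooses'' one incident edge $\pi_i(v)$ and star centres are split into copies; your primary/secondary edges correspond exactly to the paper's $N^\circ_i/N^\bullet_i$, giving the same ten path types, the same contraction trick for source components, and the same identities $|N_1|-|N_2|=\sum_P n(P)$ and $|F_1|-|F_2|=|N^\circ_2|-|N^\circ_1|=\sum_P f(P)$.

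The ``main obstacle'' you flag at star centres dissolves with the paper's explicit choice of $R'_i$, namely $R'_i:=\{v\in V:\ v\text{ is covered by }N^*_i\Delta P\text{ in }G^*\}$. In $G^*$ the centre $c$ is incident only to its primary edge, so if that edge is swapped then $c$ simply moves to $R'_{3-i}$ rather than being ``kept on its original side''; this is harmless because $cl_1\in N'_{3-i}$ already witnesses $c\in\covered(N'_{3-i})$, and the containment $R'_i\subseteq\covered(N'_i)$ as well as the size formula $|R'_1|-|R'_2|=|R_1|-|R_2|-2\sum_j n(P_j)+2\sum_j f(P_j)$ follow uniformly from counting real endpoints of auxiliary edges, with no endpoint-by-endpoint case analysis needed. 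Your ``keep on original side'' fix is unnecessary, would force a non-uniform definition of $f(P)$, and could fail outright if every edge of the star happens to lie on the chosen paths.
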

We postpone the proof of this claim and give a proof of the lemmas relying on it.
Note that we also have Operations 1',2',3',4' by switching the roles of $F_1$ and $F_2$.
By the assumption, we have two disjoint mixed edge covers $F_1$ and $F_2$.
For Lemma~\ref{lem:mec1}, we repeat updating $F_1, F_2$ in the following manner:
\begin{itemize}
\item If $F_i$ is not minimal, replace it with a minimal mixed edge cover $F'_i\subseteq F_i$.
\smallskip
\item If $F_1$ and $F_2$ are minimal and $||N_1|-|N_2||>2$,
apply Operation 1, 1', \mbox{2, or 2'} depending on the signs of $|N_1|-|N_2|$ and $|F_1|-|F_2|$ and update $F_1, F_2$ with $F'_1$, $F'_2$.
\smallskip
\item If $F_1$ and  $F_2$ are minimal and $||N_1|-|N_2||\leq 2$ and $||F_1|-|F_2||>1$,
apply Operation 3, 3', 4 or 4' depending on the signs of $|N_1|-|N_2|$ and $|F_1|-|F_2|$ and update $F_1, F_2$ with $F'_1$, $F'_2$.
\end{itemize}
Throughout the repetition of updates, $|F_1\cup F_2|$ is monotone decreasing.
Note that $||N_1|-|N_2||$ decreases when Operation 1, 1', 2 or 2' is applied.
Also, when Operation 3, 3', 4 or 4' is applied, $||F_1|-|F_2||$ decreases while  $||N_1|-|N_2||\leq 2$ is preserved.
Thus, $(|F_1\cup F_2|, \max\{||N_1|-|N_2||, 2\}, ||F_1|-|F_2||)$ is lexicographically monotone decreasing and
we finally obtain $||N_1|-|N_2||\leq 2$ and $||F_1|-|F_2||\leq 1$.
Then $||F_1|-|F_2||+||N_1|-|N_2|| \leq 2$ if $||N_1|-|N_2||<2$ or $||F_1|-|F_2||<1$,
while otherwise we can apply Operation 1, 1',2 or 2' to update $F_1$ and $F_2$ so that $||N_1|-|N_2||=0$ and $||F_1|-|F_2||=1$.
Thus, we have $||F_1|-|F_2||+||N_1|-|N_2|| \leq 2$, and Lemma~\ref{lem:mec1} is proved.
Lemma~\ref{lem:mec2} can be shown similarly by swapping the roles of $N_i$ and $F_i$
and that of Operations 1--2 and 3--4.
\end{proof}

Here we prove the postponed claim.

\paragraph{Proof of the Claim.}
By the assumption, we have two disjoint minimal mixed edge covers $F_1$ and $F_2$.
Proposition~\ref{prop:mec-chara} and the minimality of each $F_{i}$ imply that
\begin{itemize}
\item $B_i:=F_{i}\cap A$ forms a branching whose root set $R_i$ satisfies $R_i= \covered(N_{i})$.
\item For every $e\in N_i=F_{i}\cap E$, at least one endpoint is covered only by $e$ in $F_i$. (Hence, $N_{i}$ forms a union of stars.)
\end{itemize}
We construct an auxiliary undirected graph $G^*=(V^*,N^*)$ to find good alternating paths in $N_1\cup N_2$.
First, for each $i=1,2$ and $v\in R_i$, choose any edge $e\in N_i$ incident to $v$, and call it $\pi_i(v)$.
We say that $v$ {\bf chooses} $e$ in $N_i$ if $\pi_i(v)=e$.
As $N_i$ is the union of stars, each $e\in N_{i}$ is chosen by at least one endpoint.
For convenience, we set $\pi_i(v)=\emptyset$ for each $v\in V\setminus R_i$.
The vertex set $V^*$ is given by
\[V^*:=\set{v^{\rm r} |v\in V}\cup \set{v^i_e| i\in\{1,2\},~e=uv\in N_i,~\pi_i(u)=e\neq\pi_i(v)}.\]
Thus, the center of each star is split into multiple vertices (see Fig.~\ref{fig5}).
The edge set $N^*$ consists of two disjoint parts $N^*_1$ and $N^*_2$,
and each $N^*_i$ is defined as
\begin{align*}
N^*_i&=N^{\circ}_i\cup N^{\bullet}_i,\\
N^{\circ}_i&=\set{u^{\rm r}v^{\rm r}|e=uv\in N_i,~\pi_i(u)=e=\pi_i(v)},\\
N^{\bullet}_i&=\set{u^{\rm r}v^i_e|e=uv\in N_i,~\pi_i(u)=e\neq \pi_i(v)},
\end{align*}
where each edge is an unordered pair, i.e., $uv=vu$.
There is a one-to-one correspondence between $N_{i}$ and $N^*_i$, and hence $|N_i|=|N^*_i|=|N^{\circ}_i|+|N^{\bullet}_i|$.
Also, because each root chooses exactly one edge, we have $|R_i|=2|N^{\circ}_i|+|N^{\bullet}_i|$, and hence $|F_i|=|N_i|+(|V|-|R_i|)=|V|-|N^{\circ}_i|$.
Therefore,
\begin{align}
|N_1|-|N_2|&=|N^*_1|-|N^*_2|=|N^{\circ}_1|+|N^{\bullet}_1|-|N^{\circ}_2|-|N^{\bullet}_2|\label{eq:edge-num}\\
|F_1|-|F_2|&=|N^{\circ}_2|-|N^{\circ}_1|.\label{eq:total-num}
\end{align}
This definition of $G^*$ gives the following property.
\begin{description}
\item[(a)\!] Each vertex of type $v^{\rm r}$ is incident to one edge in $N^*_i$ if $v\in R_i$ and otherwise no edge in $N^*_i$.
Each vertex of type $v_e^i$ is incident to one edge in $N^{\bullet}_i$ and no edge in $N^*\setminus N^{\bullet}_i$.
Therefore, $N^*_i$ is a matching in $G^*$ for each $i=1,2$.
\end{description}
For each source component $S$ of $B_{1}\cup B_{2}$ in the original graph,
if there are $u,v\in S$ such that $u\in R_1\setminus R_2$ and $v\in R_2\setminus R_1$,
take such a pair $(u,v)$ and contract $u^{\rm r}$ and $v^{\rm r}$ in $G^*$.
After this operation, $N^*_1$ and $N^*_2$ are still matchings, and hence
$N^*_1\cup N^*_2$ can be partitioned into alternating cycles and paths.
Note that each path in $G^*$ corresponds to a walk in $G$, which is not necessarily acyclic.
(See an example in Fig.~\ref{fig5}.)
\begin{figure}[htb]
\begin{center}
   \includegraphics[width=95mm]{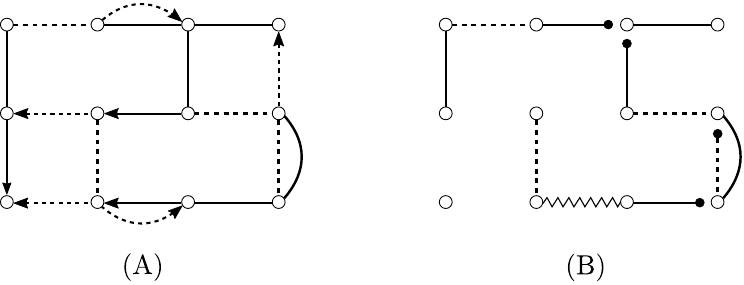}
\caption{
(A) A graph $G=(V,F_1\cup F_2)$. Thick and dashed lines represent $F_1$ and $F_2$, respectively.
(B)~The auxiliary graph $G^*=(V^*, N^*_1\cup N^*_2)$ for some $\pi_1$ and $\pi_2$.  Vertices of types $v^{\rm r}$ and $v^i_e$ are represented by white and black circles, respectively. The zigzag line means a contraction. The edge set is partitioned into four paths.}
\label{fig5}
\end{center}
\end{figure}

By (a), for any path or cycle, all internal vertices are of type $v^{\rm r}$, and hence
all internal edges belong to $N^{\circ}_1\cup N^{\circ}_2$.
Only the first and last edge can belong to $N^{\bullet}_1\cup N^{\bullet}_2$.
Depending on the types of the first and last edges, there are ten types of paths, shown in Table~\ref{table:types-mec}.
For each path $P$, let $n(P):=|P\cap N^*_1|-|P\cap N^*_2|$ 
and $f(P):=|P\cap N^{\circ}_2|-|P\cap N^{\circ}_1|$. We see that these values depend only on the type of $P$.
\begin{table}[hbtp]
  \caption{Types of Alternating Paths}
  \label{table:types-mec}
  \centering
  \begin{tabular}{|c|c|c|c|}
    \hline
    type &  end-edges  & $n(P)$ & $f(P)$ \\
    \hline
	1 & $N^{\bullet}_1, N^{\bullet}_1$ & 1 & 1 \\
	2 & $N^{\circ}_1, N^{\bullet}_1$ & 1 & 0 \\
	3 & $N^{\circ}_1, N^{\circ}_1$ & 1 & -1 \\
	\hline
	4 & $N^{\bullet}_2, N^{\bullet}_2$ & -1 & -1 \\
	5 & $N^{\circ}_2, N^{\bullet}_2$ & -1 & 0 \\
	6 & $N^{\circ}_2, N^{\circ}_2$ & -1 & 1 \\
	\hline
	7 & $N^{\bullet}_1, N^{\bullet}_2$ & 0 & 0 \\
	8 & $N^{\circ}_1, N^{\bullet}_2$ & 0 & 1 \\
	9 & $N^{\bullet}_1, N^{\circ}_2$ & 0 & -1 \\
	10 & $N^{\circ}_1, N^{\circ}_2$ & 0 & 0 \\
    \hline
  \end{tabular}
\end{table}

Now we show the following statement.
\begin{description}
\item[(b)\!]
If $P_1,\dots,P_k$ is a set of alternating paths in $G^*$,
then we can partition $F_1\cup F_2$ into two mixed edge covers $F'_1$ and $F'_2$ so that
$|N'_1|-|N'_2|=|N_1|-|N_2|-2\sum_{j=1}^k n(P_j)$ and $|F'_1|-|F'_2|=|F_1|-|F_2|-2\sum_{j=1}^k f(P_j)$.
\end{description}
To obtain $F'_1$ and $F'_2$, we first define edge sets $N'_1$, $N'_2$ and root sets $R'_1$, $R'_2$, whose validity we will show.
Let $P=P_1 \cup \dots \cup P_k$ and let $P'\subseteq N_1\cup N_2$ be the union of walks in $G$ corresponding to $P$. For each $i=1,2$ define $N'_i:=N_i\Delta P'$.
Then $N'_i$ corresponds to $N^*_i\Delta P$ in $G^*$ and
\begin{equation}
|N'_1|-|N'_2|=|N_1|-|N_2|-2(|P\cap N^*_1|-|P\cap N^*_2|)=|N_1|-|N_2|-2\sum_{j=1}^{k} n(P_j).\label{eq:mec_val_N}
\end{equation}
Let $R'_i:=\set{v\in V|v^{\rm r}\in \covered_{G^*}(N^*_i\Delta P)}$ for each $i=1,2$.
Because both endpoints of each $e\in N^{\circ}_1\cup N^{\circ}_2$ and one endpoint of each $e\in N^{\bullet}_1\cup N^{\bullet}_2$ are of type $v^{\rm r}$,
\begin{align}
|R'_1|-|R'_2|&=|R_1|-|R_2|-2(|P\cap N^*_1|-|P\cap N^*_2|)-2(|P\cap N^{\circ}_1|-|P\cap N^{\circ}_2|)\nonumber\\
&=|R_1|-|R_2|-2\sum_{j=1}^k n(P_j)+2\sum_{j=1}^k f(P_j).\label{eq:mec_val_R}
\end{align}
Note that $R_1\cap R_2=R'_1\cap R'_2$ and $R_1\cup R_2=R'_1\cup R'_2$.
Moreover, each strong component $S$ of $B_1\cup B_2$ intersects with $R'_1$ and $R'_2$ as we have contracted $u^{\rm r}$ and $v^{\rm r}$ in $G^*$
for a pair $u,v\in S$ with $u\in R_1\setminus R_2$ and $v\in R_2\setminus R_1$.
Therefore, by Lemma~\ref{lem:Schrijver}, we can partition $B_1\cup B_2$ into branchings $B'_1$ and $B'_2$ such that $R(B'_1)=R'_1$ and $R(B'_2)=R'_2$.
Define $F'_1:=N'_1\cup B'_1$ and $F'_2:=N'_2\cup B'_2$.
By the definition, each $R'_i$ satisfies $R'_i\subseteq \covered(N'_i)$. Then $F'_i$ is a mixed edge cover by Proposition~\ref{prop:mec-chara}.
Also, by $|F_i|=|N_i|+|B_i|=|N_i|+(|V|-|R_i|)$ and \eqref{eq:mec_val_N}, \eqref{eq:mec_val_R}, we have  $|F'_1|-|F'_2|-(|F_1|-|F_2|)=|N'_1|-|N'_2|-(|N_1|-|N_2|)+|R'_2|-|R'_1|-(|R_2|-|R_1|)=-2\sum_{j=1}^k f(P_j)$.
Together with \eqref{eq:mec_val_N},  this completes the proof of (b).
\medskip

By (b), for the implementation of the operations,
it suffices to show the existence of paths with suitable $n(P)$ and $f(P)$ values.
Recall that  $N^*_1\cup N^*_2$ is partitioned into alternating paths and cycles;
let $\mathcal{P}$ and $\mathcal{C}$ be those collections of paths and cycles.
Then $|N^*_1|-|N^*_2|$ is the sum of two values
$\sum_{P\in \mathcal{P}} n(P)$ and
$\sum_{C\in\mathcal{C}}n(C)$, where the latter is $0$ as each cycle has even length.
Then \eqref{eq:edge-num} implies $|N_1|-|N_2|=|N^*_1|-|N^*_2|=\sum_{P\in \mathcal{P}} n(P)$. A similar argument and \eqref{eq:total-num} imply $|F_1|-|F_2|=|N^{\circ}_1|-|N^{\circ}_2|=\sum_{P\in \mathcal{P}} f(P)$.
Because each type defines the values of $n(P)$ and $f(P)$ as in Table~\ref{table:types-mec},
we have the following equations, where we denote by $p(t)$ the number of paths of type $t\in\{1,2,\dots, 10\}$:
\begin{align}
|N_1|-|N_2|&=p(1)+p(2)+p(3)-p(4)-p(5)-p(6)\label{eq:edge-num2}\\
|F_1|-|F_2|&=p(1)+p(6)+p(8)-p(3)-p(4)-p(9).\label{eq:total-num2}
\end{align}
Now we implement Operations 1--4 in the claim.

\begin{itemize}
\item Operation 1. We have $|N_1|-|N_2|>0$ and $|F_1|-|F_2|\geq 0$.
Because \eqref{eq:edge-num2} is positive, at least one of $p(1), p(2), p(3)$ is positive.
If $p(1)>0$ or $p(2)>0$, then there is a path of type 1 or 2. By exchange along such a path, we obtain $F'_1$ and $F'_2$ with the desired condition.
In the remaining case, $p(1)=0$ and $p(2)=0$. Then the positivity of \eqref{eq:edge-num2} implies $p(3)-p(6)>0$, and hence $p(1)+p(6)-p(3)<0$.
As \eqref{eq:total-num2} is nonnegative, we have $p(8)>0$.
Exchange along a pair of paths of types 3 and 8 yields the desired $F'_1$, $F'_2$  by (b).
\medskip

\item Operation 2. We have $|N_1|-|N_2|>0$ and $|F_1|-|F_2|\leq 0$.
Because \eqref{eq:edge-num2} is positive, at least one of $p(1), p(2), p(3)$ is positive.
If $p(2)>0$ or $p(3)>0$, then there is a path of type 2 or 3. By exchange along such a path, we obtain $F'_1$ and $F'_2$ with the desired condition.
In the remaining case, $p(2)=0$ and $p(3)=0$. Then the positivity of \eqref{eq:edge-num2} implies $p(1)-p(4)>0$, and hence $p(1)-p(3)-p(4)>0$.
As \eqref{eq:total-num2} is nonpositive, we have $p(9)>0$.
Exchange along a pair of paths of types 1 and 9 yields the desired $F'_1$, $F'_2$  by (b).
\medskip

\item Operation 3. We have $|N_1|-|N_2|\geq 0$ and $|F_1|-|F_2|> 0$.
Because \eqref{eq:total-num2} is positive, at least one of $p(1), p(6), p(8)$ is positive.
If $p(1)>0$ or $p(8)>0$, then there is a path of type 1 or 8. By exchange along such a path, we obtain $F'_1$ and $F'_2$ with the desired condition.
In the remaining case, $p(1)=0$ and $p(8)=0$. Then the positivity of \eqref{eq:total-num2} implies $p(6)-p(3)>0$, and hence $p(1)+p(3)-p(6)<0$.
As \eqref{eq:edge-num2} is nonnegative, we have $p(2)>0$.
Exchange along a pair of paths of types 2 and 6 yields the desired $F'_1$, $F'_2$  by (b).
\medskip

\item Operation 4. We have $|N_1|-|N_2|\leq 0$ and $|F_1|-|F_2|> 0$.
Because \eqref{eq:total-num2} is positive, at least one of $p(1), p(6), p(8)$ is positive.
If $p(6)>0$ or $p(8)>0$, then there is a path of type 6 or 8. By exchange along such a path, we obtain $F'_1$ and $F'_2$ with the desired condition.
In the remaining case, $p(6)=0$ and $p(8)=0$. Then the positivity of \eqref{eq:total-num2} implies $p(1)-p(4)>0$, and hence $p(1)-p(4)-p(6)>0$.
As \eqref{eq:edge-num2} is nonpositive, we have $p(5)>0$.
Exchange along a pair of paths of types 1 and 5 yields the desired $F'_1$, $F'_2$  by (b).
\end{itemize}
Thus, Operations 1--4 are implemented.
\qed

\subsection{Proofs of Theorems~\ref{thm:mec1} and \ref{thm:mec2}}
Now we prove Theorems~\ref{thm:mec1} and \ref{thm:mec2} using Lemmas~\ref{lem:mec1} and \ref{lem:mec2}, respectively.
\paragraph{Proof of Theorem~\ref{thm:mec1}.}
By the assumption of the theorem, we have $k$ disjoint mixed edge covers $F_1, F_2,\dots, F_k$ in $G$.
We repeat updating them by the following 2-phase algorithm.

In the first phase, in every step we choose $i$ and $j$ with $|F_i|-|F_j|$ maximal, and use Lemma~\ref{lem:mec1} to replace them
by mixed edge covers $F_i'$ and $F_j'$ such that $||F_i'|-|F_j'|| \leq 1$.
We repeat this until there is a number $q$ such that $|F_i| \in \{q,q+1\}$ for every $i$.
This is achieved in a polynomial number of steps by a similar argument as in the proof of Theorem ~\ref{thm:mf1}, with the additional
observation that there are at most $|E\cup A|$ steps that decrease $|F_1\cup F_2\cup \cdots \cup F_k|$.

In the second phase, we distinguish two cases.
Suppose first that each $F_i$ has the same size $q$. In every step, we choose $i$ and $j$ with $|N_i|-|N_j|$ maximal,
and use Lemma~\ref{lem:mec1} to replace $F_i$ and $F_j$ with mixed edge covers $F_i'$ and $F_j'$ with
$||F_i'|-|F_j'||\leq 1$ and $||F_i'|-|F_j'||+||N'_i|-|N'_j||\leq 2$.
If $F'_i\cup F'_j$ is a proper subset of $F_i\cup F_j$, we go back to the beginning of the first phase.
Otherwise we continue the second phase, where $|F'_i|=|F'_j|=q$ follows from  $|F'_i|+|F'_j|=|F_i|+|F_j|=2q$.
We repeat this until $||N_i|-|N_j|| \leq 2$ for every $i,j$.
Note that during this phase, the size of every $F_i$ remains $q$.
Therefore, when this phase terminates, we have $||N_i|-|N_j|| \leq 2$ and $||F_i|-|F_j||=0$ for every $i,j$.

Now suppose that not every $F_i$ has the same size at the end of the first phase.
Then, in each step of the second phase we choose $i$ and $j$ such that $|F_i|=q$, $|F_j|=q+1$, and $||N_i|-|N_j||$ is maximal among these.
By Lemma~\ref{lem:mec1}, we can replace $F_i$ and $F_j$ by mixed edge covers $F_i'$ and $F_j'$ such that $||F_i'|-|F_j'||\leq 1$ and $||F_i'|-|F_j'||+||N'_i|-|N'_j||\leq 2$.
If $F'_i\cup F'_j$ is a proper subset of $F_i\cup F_j$, we go back to the beginning of the first phase.
Otherwise we continue the second phase, where $\{|F_i'|,|F_j'|\}=\{q,q+1\}$ follows from $|F_i'|+|F_j'|=|F_i|+|F_j|$, and hence $||N'_i|-|N'_j||\leq 1$. We repeat this until $||N_i|-|N_j|| \leq 1$ whenever $|F_i| \neq |F_j|$.
This also implies that $||N_i|-|N_j|| \leq 2$ when $|F_i| = |F_j|$.

Note that the algorithm goes back to the first phase at most $|E\cup A|$ times
because it decreases $|F_1\cup F_2\cup \cdots \cup F_k|$. Thus the algorithm terminates in a polynomial number of steps,
and we finally obtain $(F_1, F_2,\dots,F_k)$ such that, for every $i,j\in [k]$, the value of $(|N_i|-|N_j|, |F_i|-|F_j|)$ belongs to
\begin{equation}
\{(0,0), \pm(0,1), \pm(1,0), \pm(1,1), \pm(1,-1), \pm(2,0)\}.\label{eq:NF1}
\end{equation}

We now define a superset $F''_i$ of each $F_i$ so that $(F''_1, F''_2,\dots,F''_k)$ forms a partition of $E\cup A$.
Note that any superset of a mixed edge cover is also a mixed edge cover.
So we care only about  the numbers of edges and arcs in $F''_i\setminus F_i$.

Let $E':=E\setminus(F_1\cup F_2\cup \cdots \cup F_k)$ and $n_E$ be the remainder of the division of $|E'|$ by $k$.
Divide $E'$ into $k$ parts $E'_1, E'_2, \dots, E'_k$ such that
\begin{itemize}
\item $|E'_i|=\lfloor|E'|/k\rfloor+1$ for the smallest $n_E$ members $F_i$ with respect to  $(|F_i|, |N_i|)$,
\item $|E'_i|=\lfloor|E'|/k\rfloor$ for other $F_i$,
\end{itemize}
where the order for $(|F_i|, |N_i|)$ is defined lexicographically.
Let $F'_i:=F_i\cup E'_i$ for each $i\in [k]$.
By the definition of $E'_i$, the condition $|N'_i|-|N'_j|>|N_i|-|N_j|$ implies either (i) $|F_i|-|F_j|<0$ or (ii) $|N_i|-|N_j|\leq 0$ and $|F_i|-|F_j|=0$.
Also, it implies $|F'_i|-|F'_j|>|F_i|-|F_j|$. Then, we can check that, for every $i,j\in[k]$,
the pair $(|N'_i|-|N'_j|, |F'_i|-|F'_j|)$ stays in the set of \eqref{eq:NF1}.

Define $A':=A\setminus(F_1\cup F_2\cup \cdots\cup F_k)$ and let $n_A$ be the remainder of the division of $|A'|$ by $k$.
Divide $A'$ into $A'_1,A'_2,\dots,A'_k$ such that
\begin{itemize}
\item $|A'_i|=\lfloor|A'|/k\rfloor+1$ for the smallest $n_A$ members $F'_i$ with respect to $(|F'_i|, -|N'_i|)$
\item $|A'_i|=\lfloor|A'|/k\rfloor$ for other $F_i$.
\end{itemize}
Let $F''_i:=F'_i\cup A'_i$ for each $i\in [k]$.
Then $(F''_1, F''_2,\dots,F''_k)$ is a partition of $E\cup A$ consisting of $k$ mixed edge covers.
By the definition of $A'_i$, $|F''_i|-|F''_j|>|F'_i|-|F'_j|$ implies either (i) $|F'_i|-|F'_j|<0$ or (ii) $|N'_i|-|N'_j|\geq 0$ and $|F'_i|-|F'_j|=0$.
Also $|N''_i|-|N''_j|=|N'_i|-|N'_j|$ for every $i,j\in[k]$.
Because $(|N'_i|-|N'_j|, |F'_i|-|F'_j|)$ belongs to \eqref{eq:NF1}, we see that $(|N''_i|-|N''_j|, |F''_i|-|F''_j|)$ belongs to
\begin{equation*}
\{(0,0), \pm(0,1), \pm(1,0), \pm(1,1), \pm(1,-1), \pm(2,0), \pm(2,1)\}.
\end{equation*}
Note that $|B''_i|-|B''_j|=(|F''_i|-|F''_j|)-(|N''_i|-|N''_j|)$.
Then, for every $i,j\in[k]$ we have
$||F''_i|-|F''_j|| \leq 1$, $||N''_i|-|N''_j|| \leq 2$, and  $||B''_i|-|B''_j|| \leq 2$.
\qed

\paragraph{Proof of Theorem~\ref{thm:mec2}.}
Analogously to the proof of Theorem~\ref{thm:mec1},  using Lemma~\ref{lem:mec2} repeatedly we obtain $k$ disjoint mixed edge covers
$(F_1, F_2,\dots,F_k)$ such that, for every $i,j\in [k]$, the value $(|N_i|-|N_j|, |F_i|-|F_j|)$ belongs to
\begin{equation}
\{(0,0), \pm(0,1), \pm(0,2), \pm(1,0), \pm(1,1), \pm(1,-1)\}.\label{eq:NF2}
\end{equation}
Define $E'_i$ and $F'_i$ as in the proof of Theorem~\ref{thm:mec1} except that we use $(|N_i|, |F_i|)$ instead of $(|F_i|,|N_i|)$.
Then, the condition $|N'_i|-|N'_j|>|N_i|-|N_j|$ implies either (i) $|N_i|-|N_j|<0$ or (ii) $|N_i|-|N_j|=0$ and $|F_i|-|F_j|\leq 0$.
This implies that $(|N'_i|-|N'_j|, |F'_i|-|F'_j|)$ belongs to \eqref{eq:NF2} again.
Define $A'_i$ and $F''_i$ as in the proof of Theorem~\ref{thm:mec1} (in fact, it is sufficient to use order on $|F'_i|$ instead of $(|F'_i|, -|N'_i|)$).
Then $(|N''_i|-|N''_j|, |F''_i|-|F''_j|)$ still belongs to \eqref{eq:NF2}.
Therefore, $F''_1, F''_2,\dots,F''_k$ are mixed edge covers partitioning $E\cup A$ and satisfying
$||F''_i|-|F''_j|| \leq 2$, $||N''_i|-|N''_j|| \leq 1$, and  $||B''_i|-|B''_j|| \leq 2$
for every $i,j\in [k]$.
\qed
\subsection{Remarks on Mixed Covering Forests and on Bibranchings}\label{sec:mcf}

As mentioned in the Introduction, mixed covering forests are hard to equalize as they require acyclicity.
On the other hand, by Proposition~\ref{prop:equivalence}, any mixed edge cover contains some mixed covering forest as a subgraph.
This fact implies packing versions of Theorems~\ref{thm:mec1} and \ref{thm:mec2} for mixed covering forests.

\begin{corollary}\label{cor:mcf1}
Let $G=(V, E \cup A)$ be a mixed graph that contains $k$ disjoint mixed covering forests. Then $G$ contains $k$ disjoint mixed covering forests $F_1,\dots,F_k$
such that, for every $i,j \in [k]$, we have $||F_i|-|F_j|| \leq 1$, $||N_i|-|N_j|| \leq 2$, and  $||B_i|-|B_j|| \leq 2$.
\end{corollary}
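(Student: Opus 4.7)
The plan is to derive Corollary~\ref{cor:mcf1} from the algorithm behind Theorem~\ref{thm:mec1} via Proposition~\ref{prop:equivalence}. Every mixed covering forest is a mixed edge cover, and—more importantly for the argument—minimal mixed covering forests coincide exactly with minimal mixed edge covers. This equivalence will serve as the bridge between the two notions. The $k$ disjoint mixed covering forests given in the hypothesis therefore furnish $k$ disjoint mixed edge covers in $G$, and after trimming each one to a minimal subset I may assume we start from $k$ disjoint \emph{minimal} mixed edge covers $\tilde F_1,\dots,\tilde F_k$.

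I would then run the two phases of Theorem~\ref{thm:mec1}'s algorithm on $\tilde F_1,\dots,\tilde F_k$, maintaining the invariant that every current cover is minimal. By the proof of Theorem~\ref{thm:mec1}, at the end of phase 2 the pairs $(|N_i|-|N_j|,\,|F_i|-|F_j|)$ lie in $\{(0,0),\,\pm(0,1),\,\pm(1,0),\,\pm(1,1),\,\pm(1,-1),\,\pm(2,0)\}$, which directly gives $||F_i|-|F_j||\le 1$, $||N_i|-|N_j||\le 2$, and $||B_i|-|B_j||\le 2$. I would deliberately \emph{skip} the subsequent ``extension'' step of Theorem~\ref{thm:mec1} that distributes the leftover edges and arcs of $G\setminus(\bigcup_i F_i)$ into a partition of $G$, since for a packing statement we do not need a partition and those leftovers could both destroy minimality and introduce cycles. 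Because the output then consists of minimal mixed edge covers, Proposition~\ref{prop:equivalence} identifies them as minimal mixed covering forests in $G$, which yields the corollary.

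The main obstacle is verifying that the algorithm can indeed be run so as to preserve minimality throughout, without damaging the equalization guarantees. Lemma~\ref{lem:mec1}'s proof already alternates a ``make-minimal'' step with the equalizing operations on a pair, and convergence is guaranteed by the lexicographic decrease of $(|F_1\cup F_2|,\,\max\{||N_1|-|N_2||,2\},\,||F_1|-|F_2||)$. The delicate point is the marginal ``tiebreaking'' operation at the very end of Lemma~\ref{lem:mec1}'s proof, which may leave the two covers non-minimal as a one-off step. I would handle this by re-entering the outer loop and trimming to minimal subsets; termination still holds because trimming strictly decreases $|F_1\cup F_2|$ and this quantity is bounded below by zero, so after finitely many iterations we reach a state in which the covers are simultaneously minimal and satisfy the required inequalities.
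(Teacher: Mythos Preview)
Your proposal is correct and follows essentially the same route as the paper. The paper's proof simply adds to the 2-phase algorithm of Theorem~\ref{thm:mec1} the operation ``if some $F_i$ is not a mixed covering forest, replace it by a mixed covering forest contained in it,'' notes that this does not destroy the monotonicity that guarantees termination, and stops before the extension step---exactly your plan, except that the paper trims to an arbitrary contained mixed covering forest rather than all the way to a minimal mixed edge cover (your stronger trimming is fine too, via Proposition~\ref{prop:equivalence}).
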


\begin{corollary}\label{cor:mcf2}
Let $G=(V, E \cup A)$ be a mixed graph that contains $k$ disjoint mixed covering forests. Then $G$ contains $k$ disjoint mixed covering forests $F_1,\dots,F_k$
such that, for every $i,j \in [k]$, we have $||F_i|-|F_j|| \leq 2$, $||N_i|-|N_j|| \leq 1$, and  $||B_i|-|B_j|| \leq 2$.
\end{corollary}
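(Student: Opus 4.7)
The plan is to recycle the iterative algorithm used to prove Theorem~\ref{thm:mec2}, but to stop it at the point where it has produced a packing rather than a partition. By Proposition~\ref{prop:equivalence}, each of the given disjoint mixed covering forests is already a mixed edge cover of $G$, so the subgraph $H$ on $V$ with edge/arc set $\bigcup_{i=1}^{k} F_i$ is partitioned into $k$ mixed edge covers. I would then apply the iterative procedure from the proof of Theorem~\ref{thm:mec2} to this partition of $H$ (invoking Lemma~\ref{lem:mec2} on pairs), and bypass the final spare-edge insertion that was only used there to enlarge the packing back into a partition of the entire graph.

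As displayed in the proof of Theorem~\ref{thm:mec2}, this intermediate step outputs $k$ disjoint mixed edge covers $F'_1,\dots,F'_k$ such that for every $i,j$ the pair $(|N'_i|-|N'_j|,\,|F'_i|-|F'_j|)$ lies in
\[
\{(0,0),\,\pm(0,1),\,\pm(0,2),\,\pm(1,0),\,\pm(1,1),\,\pm(1,-1)\}.
\]
Unpacking this list immediately gives $||F'_i|-|F'_j||\leq 2$ and $||N'_i|-|N'_j||\leq 1$, and using the identity $|B'_i|-|B'_j|=(|F'_i|-|F'_j|)-(|N'_i|-|N'_j|)$ one checks that $||B'_i|-|B'_j||\leq 2$ as well. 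These are precisely the three bounds claimed by the corollary.

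The remaining and most delicate point is to argue that every $F'_i$ is actually a mixed covering forest, not merely a mixed edge cover. The inner algorithm of Lemma~\ref{lem:mec2} follows the same template as the one exhibited for Lemma~\ref{lem:mec1}: at the start of each of its iterations, any member of the current pair that is not minimal is replaced by a minimal mixed edge cover contained in it, and the algorithm terminates only in a state where both members are minimal and the size conditions are met. Because the outer algorithm touches only two indices at a time, each $F'_\ell$ is minimal at the moment it is last modified and is unaffected by later calls on other pairs. Hence every $F'_i$ produced at termination is a minimal mixed edge cover, and by the second half of Proposition~\ref{prop:equivalence} a minimal mixed covering forest, in particular a mixed covering forest. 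The obstacle to watch out for is exactly this minimality bookkeeping: in the proof of Theorem~\ref{thm:mec2} the concluding spare-edge step silently abandons minimality in order to achieve a partition, which would in general create cycles and break acyclicity here. Skipping that step and relying solely on the packing yielded by the iterative phase is what lets minimality, and therefore acyclicity, come along free of charge via Proposition~\ref{prop:equivalence}.
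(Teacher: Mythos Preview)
Your overall strategy---run the 2-phase algorithm from the proof of Theorem~\ref{thm:mec2} on the union of the given covering forests, stop before the spare-edge insertion, and output the resulting packing---is exactly what the paper does. You also correctly read off the three size bounds from the set \eqref{eq:NF2}. The gap is in the paragraph where you argue that every output $F'_\ell$ is already a minimal mixed edge cover.

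You assert that the inner routine of Lemma~\ref{lem:mec2} ``terminates only in a state where both members are minimal and the size conditions are met''. But look again at the proof of Lemmas~\ref{lem:mec1}/\ref{lem:mec2}: after the main loop halts (at which point both members are indeed minimal with $||N_1|-|N_2||\le 2$ and $||F_1|-|F_2||\le 1$), there is a separate final step in the ``otherwise'' case ($||N_1|-|N_2||=2$ and $||F_1|-|F_2||=1$) that applies one more Operation. The Claim explicitly says the Operations return ``(not necessarily minimal) mixed edge covers''. So the pair handed back by a call to Lemma~\ref{lem:mec2} need not be minimal, and therefore the outer 2-phase algorithm can terminate with some $F'_\ell$ that is a mixed edge cover but not a mixed covering forest. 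Your appeal to Proposition~\ref{prop:equivalence} then does not go through, and simply minimizing each $F'_\ell$ afterwards may destroy the size bounds you just established.

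The paper closes this gap not by proving minimality as a byproduct, but by adding one more update rule to the 2-phase algorithm: ``If some $F_i$ is not a mixed covering forest, replace $F_i$ with a mixed covering forest contained in it.'' This step only shrinks $|F_1\cup\cdots\cup F_k|$, so the lexicographic monotone potential still forces termination; and at termination every $F_i$ is a covering forest because the new rule no longer applies, while the size bounds hold because the remaining termination conditions are met. This is the missing ingredient in your argument---the covering-forest property has to be enforced inside the loop, not claimed after the fact.
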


\begin{proof}
These corollaries are shown by modifying the 2-phase algorithm used in the proofs of Theorems~\ref{thm:mec1} and \ref{thm:mec2}.
When we repeat updates of $(F_1, F_2,\dots, F_k)$,
we also consider the following operation: ``If some $F_i$ is not a mixed covering forest, replace $F_i$ with a mixed covering forest contained in it.''
This additional operation does not violate monotonicity. We obtain the required mixed covering forests when the algorithm terminates.
\end{proof}

We now consider the consequences for bibranchings, which were introduced by Schrijver \cite{Schr82}. A directed graph is called partitionable if its vertex set $V$ can be partitioned into $V_{1}$ and $V_{2}$ such that there is no arc from $V_{2}$ to $V_{1}$. Let $D=(V,A)$ be such a digraph with partition $V_{1},V_{2}$, and let $\delta(V_{1},V_{2})$ denote the set of arcs from $V_{1}$ to $V_{2}$. A \textbf{$(V_{1},V_{2})$-bibranching} in $D$ is an arc set $F \subseteq A$ such that for every $v \in V_1$ there is a $v \to V_2$ path and for every $v \in V_2$ there is a $V_1 \to v$ path in $F$. Contrary to the case of matching forests and mixed edge covers, it can be decided in polynomial time if $E$ can be partitioned into $k$ bibranchings \cite{Schr82}. Given $k$  $(V_{1},V_{2})$-bibranchings $F_1,\dots,F_k$, let $N_i=F_i \cap \delta(V_{1},V_{2})$ and $B_i=F_i\setminus N_i$.
We now prove the following.

\begin{corollary}\label{cor:bb1}
Let $D=(V_1,V_2; A)$ be a partitionable digraph that can be partitioned into $k$ bibranchings. Then $D$ can be partitioned into  $k$ disjoint bibranchings $F_1,\dots,F_k$
such that, for every $i,j \in [k]$, we have $||F_i|-|F_j|| \leq 1$, $||N_i|-|N_j|| \leq 2$, and  $||B_i|-|B_j|| \leq 2$.
\end{corollary}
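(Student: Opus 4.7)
The plan is to reduce Corollary~\ref{cor:bb1} to Theorem~\ref{thm:mec1} via the correspondence between bibranchings and mixed edge covers alluded to in the introduction. First I would construct an auxiliary mixed graph $G=(V,E\cup A')$ from $D=(V_{1},V_{2};A)$: turn each crossing arc in $\delta(V_{1},V_{2})$ into an undirected edge (collecting them in $E$), keep each arc inside $V_{2}$ with its original orientation, and reverse each arc inside $V_{1}$. The map $\phi$ sending each arc of $D$ to its image is then a cardinality-preserving bijection from $A$ onto $E\cup A'$ that carries crossing arcs to edges and non-crossing arcs to arcs; in particular $|\phi(F)\cap E|=|F\cap\delta(V_{1},V_{2})|$ and $|\phi(F)\cap A'|=|F\setminus\delta(V_{1},V_{2})|$ for every $F\subseteq A$.

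Next I would verify that, after extending $\phi$ to subsets, a set $F\subseteq A$ is a $(V_{1},V_{2})$-bibranching if and only if $\phi(F)$ is a mixed edge cover in $G$. For the forward direction, a $v\to V_{2}$ path in $F$ for $v\in V_{1}$ ends with a crossing arc $uw$ with $u\in V_{1}$ and $w\in V_{2}$; reversing the arcs inside $V_{1}$ turns the interior of this path into an $A'$-path from the edge endpoint $u$ of $uw\in E$ to $v$. The case $v\in V_{2}$ is symmetric, using arcs within $V_{2}$ in their original orientation. For the converse, the key observation is that reversed $V_{1}$-arcs stay inside $V_{1}$ and $V_{2}$-arcs stay inside $V_{2}$, so every $A'$-path ending at $v$ must start at an edge endpoint lying in the same part as $v$; unfolding the translation recovers the bibranching reachability.

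With the bijection in hand, any partition of $A$ into $k$ bibranchings transports to a partition of $E\cup A'$ into $k$ mixed edge covers. Applying Theorem~\ref{thm:mec1} to $G$ yields a repartition $F''_{1},\dots,F''_{k}$ of $E\cup A'$ into mixed edge covers with $||F''_{i}|-|F''_{j}||\le 1$, $||N''_{i}|-|N''_{j}||\le 2$, and $||B''_{i}|-|B''_{j}||\le 2$. Pulling back through $\phi^{-1}$ preserves all three cardinalities, since edges in $G$ correspond to crossing arcs in $D$ (the $N_{i}$ part) and $A'$-arcs correspond to non-crossing arcs in $D$ (the $B_{i}$ part), so exactly the same inequalities hold for the resulting bibranchings in $D$.

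The step that will take the most care, though it is not genuinely hard, is pinning down the bibranching/mixed-edge-cover correspondence cleanly in both directions, in particular confirming that reversing the $V_{1}$-arcs (rather than the $V_{2}$-arcs) is the correct convention so that edge endpoints in $G$ correspond to the $V_{1}$-heads and $V_{2}$-tails of crossing arcs used in $D$. Once the correspondence is in place, no new equalization argument is needed: the corollary follows mechanically from Theorem~\ref{thm:mec1}.
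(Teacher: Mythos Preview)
Your proposal is correct and follows essentially the same approach as the paper: construct the mixed graph by turning crossing arcs into edges and reversing the arcs inside $V_1$, observe that bibranchings correspond exactly to mixed edge covers under this bijection, and then apply Theorem~\ref{thm:mec1}. The paper's proof is terser (it asserts the correspondence without the detailed verification you outline), but the construction and the reduction are identical.
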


\begin{corollary}\label{cor:bb2}
Let $D=(V_1,V_2; A)$ be a partitionable digraph that can be partitioned into $k$ bibranchings. Then $D$ can be partitioned into  $k$ disjoint bibranchings $F_1,\dots,F_k$
such that, for every $i,j \in [k]$, we have $||F_i|-|F_j|| \leq 2$, $||N_i|-|N_j|| \leq 1$, and  $||B_i|-|B_j|| \leq 2$.
\end{corollary}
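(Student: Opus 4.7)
The plan is to reduce Corollary~\ref{cor:bb2} to Theorem~\ref{thm:mec2} via the correspondence between bibranchings and mixed edge covers in an associated mixed graph, as alluded to in the introduction. From a partitionable digraph $D=(V_1,V_2;A)$, I would construct an auxiliary mixed graph $G=(V,E\cup A^*)$ on the same vertex set by (i) turning every crossing arc in $\delta(V_1,V_2)$ into an undirected edge of $E$, (ii) keeping every arc with both endpoints in $V_2$ unchanged in $A^*$, and (iii) reversing every arc with both endpoints in $V_1$ and placing the reverse into $A^*$. Partitionability guarantees these three categories exhaust $A$, so this yields a bijection $F\leftrightarrow F'$ between subsets of $A$ and subsets of $E\cup A^*$ with $|F|=|F'|$, $|F\cap\delta(V_1,V_2)|=|F'\cap E|$, and $|F\setminus\delta(V_1,V_2)|=|F'\cap A^*|$; in particular, the bibranching parameters $|F_i|$, $|N_i|$, $|B_i|$ match the mixed edge cover parameters on the $G$ side.

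The heart of the argument is verifying that $F$ is a $(V_1,V_2)$-bibranching in $D$ if and only if $F'$ is a mixed edge cover in $G$. For the forward direction, if $v\in V_2$ then a $V_1\to v$ path in $F$ uses a crossing arc $u\to w$ followed by $V_2$-internal arcs from $w$ to $v$, which corresponds in $G$ to a directed $A^*$-path from the $V_2$-endpoint of an edge in $F'\cap E$ to $v$; symmetrically, for $v\in V_1$ a $v\to V_2$ path becomes, after reversing the $V_1$-arcs, a directed $A^*$-path from the $V_1$-endpoint of a crossing edge to $v$. For the converse the key observation is that $A^*$ contains no inter-side arc, so any directed $A^*$-path is confined to one side of the partition; hence the covering path for $v$ demanded by the mixed edge cover definition is forced to start at the endpoint of a crossing edge on the same side as $v$, which reproduces exactly the bibranching reachability conditions. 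This correspondence is the only genuinely non-mechanical step, and its main subtlety is the side-reversal of $V_1$-arcs, which is forced by the asymmetry in the mixed edge cover definition (paths go from edge endpoints to $v$, not both ways).

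Once the correspondence is in place, $D$ is $k$-partitionable into bibranchings exactly when $G$ is $k$-partitionable into mixed edge covers, so I would apply Theorem~\ref{thm:mec2} to $G$ to obtain mixed edge covers $F_1',\dots,F_k'$ satisfying $||F_i'|-|F_j'||\leq 2$, $||F_i'\cap E|-|F_j'\cap E||\leq 1$, and $||F_i'\cap A^*|-|F_j'\cap A^*||\leq 2$, and then translate them back through the bijection to obtain the required bibranching partition of $D$. The analogous reduction via Theorem~\ref{thm:mec1} proves Corollary~\ref{cor:bb1}.
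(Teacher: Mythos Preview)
Your proposal is correct and follows essentially the same approach as the paper: construct the auxiliary mixed graph by turning crossing arcs into edges and reversing the $V_1$-internal arcs, observe that bibranchings correspond exactly to mixed edge covers under this bijection, and then invoke Theorem~\ref{thm:mec2}. Your write-up in fact spells out the bibranching/mixed-edge-cover equivalence more carefully than the paper does.
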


\begin{proof}
We construct a mixed graph $G$ from $D$ by replacing every arc in $\delta(V_{1},V_{2})$ by an edge, and reversing every arc in $E[V_1]$. If an arc set $F$ is a bibranching in $D$, then the corresponding set of edges and arcs in $G$ form a mixed edge cover, and vice versa. Thus, Theorems~\ref{thm:mec1} and \ref{thm:mec2} imply the corollaries.
\end{proof}

\section*{Acknowledgement}
The first author was supported by the Hungarian National Research, Development and Innovation Office -- NKFIH grant K120254, and by Thematic Excellence Programme, Industry
and Digitization Subprogramme, NRDI Office, 2019.
The second author was supported by JST CREST Grant Number JPMJCR14D2, JSPS KAKENHI Grant Number JP18K18004,
and MEXT Quantum Leap Flagship Program (MEXT Q-LEAP).

%
%
%
%

\end{document}